\documentclass[12pt]{amsart}

\newtheorem{theorem}{Theorem}[section]
\newtheorem{lemma}[theorem]{Lemma}
\newtheorem{proposition}[theorem]{Proposition}
\newtheorem{corollary}[theorem]{Corollary}   
\newtheorem{definition}[theorem]{Definition}
\newtheorem{example}[theorem]{Example}

\newtheorem{question}[theorem]{Question}

\numberwithin{equation}{section}

\usepackage{times}
\usepackage{enumerate}
\usepackage{mathrsfs}
\usepackage{tfrupee}
\usepackage{tikz}
\usetikzlibrary{chains,fit}
\usetikzlibrary{shapes,snakes}
\usetikzlibrary{graphs}
\usepackage{graphicx,adjustbox}
\usepackage{hyperref}
\usepackage{amsmath,amssymb}
\usepackage{amscd}
\usepackage{graphicx}
\usepackage[all]{xy}



\begin{document}

\title{The $\mathrm{v}$-number of Monomial Ideals}
\author{
Kamalesh Saha \and Indranath Sengupta
}
\date{}

\address{\small \rm  Discipline of Mathematics, IIT Gandhinagar, Palaj, Gandhinagar, 
Gujarat 382355, INDIA.}
\email{kamalesh.saha@iitgn.ac.in}

\address{\small \rm  Discipline of Mathematics, IIT Gandhinagar, Palaj, Gandhinagar, 
Gujarat 382355, INDIA.}
\email{indranathsg@iitgn.ac.in}
\thanks{The second author is the corresponding author; supported by the 
MATRICS research grant MTR/2018/000420, sponsored by the SERB, Government of India.}

\date{}

\subjclass[2020]{Primary 13F20, 13F55, 05C70, 05E40, 13H10}

\keywords{v-number, monomial ideals, induced matching number, Castelnuovo-Mumford regularity}

\allowdisplaybreaks

\begin{abstract}
We show that the $\mathrm{v}$-number of an arbitrary monomial 
ideal is bounded below by the $\mathrm{v}$-number of its polarization and also, find a criteria for the equality. By showing the additivity of associated primes of monomial ideals, we obtain v-number is additive for arbitrary monomial ideals. We prove that the $\mathrm{v}$-number 
$\mathrm{v}(I(G))$ of the edge ideal $I(G)$, the induced 
matching number $\mathrm{im}(G)$ and the regularity 
$\mathrm{reg}(R/I(G))$ of a graph $G$, satisfy 
$\mathrm{v}(I(G))\leq \mathrm{im}(G)\leq \mathrm{reg}(R/I(G))$, 
where $G$ is either a bipartite graph, or a $(C_{4},C_{5})$-free 
vertex decomposable graph, or a whisker graph. There is an open problem 
in \cite{v}, whether $\mathrm{v}(I)\leq \mathrm{reg}(R/I)+1$ for any 
square-free monomial ideal $I$. We show that $\mathrm{v}(I(G))>\mathrm{reg}(R/I(G))+1$, 
for a disconnected graph $G$. We derive some inequalities of 
$\mathrm{v}$-numbers which may be helpful to answer the above problem 
for the case of connected graphs. We connect $\mathrm{v}(I(G))$ with 
an invariant of the line graph $L(G)$ of $G$. For a simple connected 
graph $G$, we show that $\mathrm{reg}(R/I(G))$ can be arbitrarily 
larger than $\mathrm{v}(I(G))$. Also, we try to see how the 
$\mathrm{v}$-number is related to the Cohen-Macaulay property 
of square-free monomial ideals.
\end{abstract}

\maketitle

\section{Introduction}
Let $R=K[x_{1}\,\ldots, x_{n}]=\bigoplus_{d=0}^{\infty}R_{d}$ 
denote the polynomial ring in $n$ variables over a field $K$, 
with the standard gradation. Given a graph $G$, we assume 
$V(G)=\{x_{1},\ldots,x_{n}\}$ and all graphs are assumed to be 
simple graphs.
\medskip

For a graded ideal $I$ of $R$, the set of associated prime ideals of $I$, 
denoted by $\mathrm{Ass}(I)$ or $\mathrm{Ass}(R/I)$, is the collection of 
prime ideals of $R$ of the form $(I:f)$, for some $f\in R_{d}$. A prime ideal $\mathfrak{p}\in\mathrm{Ass}(R/I)$ is said to be a \textit{minimal prime} of $I$ if for all $\mathfrak{q}\in \mathrm{Ass}(R/I)$ with $\mathfrak{p}\neq \mathfrak{q}$ we have $\mathfrak{q}\not\subset\mathfrak{p}$. If an associated prime ideal of $I$ is not minimal, then $I$ is called an \textit{embedded prime} of $I$.

\begin{definition}[\cite{cstvv}, Definition 4.1]\label{v1.1}{\rm
Let $I$ be a proper graded ideal of $R$. Then v-\textit{number} of $I$ is denoted 
by $\mathrm{v}(I)$ and is defined by
$$ \mathrm{v}(I):=
 \mathrm{min}\{d\geq 0 \mid \exists\, f\in R_{d}\,\,\text{and}\,\, \mathfrak{p}\in \mathrm{Ass}(I)\,\, \text{with}\,\, (I:f)=\mathfrak{p} \}.$$
 }
\end{definition}

\noindent For each $\mathfrak{p}\in \mathrm{Ass}(I)$, we can locally define v-number as
$$\mathrm{v}_{\mathfrak{p}}(I):=\mathrm{min}\{d\geq 0 \mid \exists\, f\in R_{d}\,\, \text{with}\,\, (I:f)=\mathfrak{p} \}.$$
Then $\mathrm{v}(I)=\mathrm{min}\{\mathrm{v}_{\mathfrak{p}}(I)\mid \mathfrak{p}\in\mathrm{Ass}(I)\}$.
\medskip

The $\mathrm{v}$-number of $I$ was introduced as an invariant of the graded 
ideal $I$, in \cite{cstvv}, in the study of Reed-Muller-type codes. This invariant of $I$ helps us understand the behaviour 
of the generalized minimum distance function $\delta_{I}$ of $I$, 
in the said context. See \cite{cstvv}, \cite{v}, 
\cite{mpv}, \cite{npv}, for further details on this.
\medskip

Procedure A1 in \cite{grv} helps us compute the 
$\mathrm{v}$-number of monomial ideals using 
\textit{Macaulay2} \cite{mac2}. In \cite{v}, 
Jaramillo and Villarreal have discussed some properties 
of $\mathrm{v}(I)$ and have proved combinatorial formula 
of $\mathrm{v}(I)$, where $I$ is a square-free monomial ideals. They 
have proved that $\mathrm{v}(I)\leq \mathrm{reg}(R/I)$ is 
satisfied for several cases of square-free monomial ideals $I$. 
In the same article, the authors have also disproved 
Conjecture 4.2 (\cite{npv}) by giving an example (\cite{v}, Example 5.4) 
of a connected graph $G$, with $3=\mathrm{v}(I(G))>\mathrm{reg}(R/I(G))=2$. 
They have proposed an open problem in \cite{v}, whether 
$\mathrm{v}(I)\leq \mathrm{reg}(R/I)+1$, for any square-free 
monomial ideal $I$. In this paper, we give a counter 
example (Example \ref{v5.1}) to this open problem and 
modify the question (Question \ref{v5.2}) for edge ideals 
of those clutters which can not be written as disjoint union 
of two clutters. We try to give partial answer to this question. We find relation between the $\mathrm{v}$-number of an arbitrary 
monomial ideal and the v-number of its polarization along with some criteria for equality.
\medskip

Bounds of Castelnuovo-Mumford regularity of edge ideals (see \cite{bc}, \cite{bcreg}, \cite{dhs}, \cite{fht}, \cite{hahui}, \cite{katz}, \cite{w}) 
and bounds of induced matching number of graphs (see \cite{cam}, 
\cite{camimint}, \cite{camst}, \cite{joos}, \cite{marin}, \cite{zito}) 
are two trending topics in the research of commutative algebra and 
combinatorics respectively. Also, obtaining induced matching number 
in general is $NP$-hard. So it would be an interesting problem to 
find the bounds of regularity and induced matching number by the 
$\mathrm{v}$-number. Considering $I(G)$ as the edge ideal of a 
graph $G$, we give a relation between $\mathrm{v}(I)$, 
$\mathrm{reg}(R/I)$ and $\mathrm{im}(G)$ for bipartite graphs 
(Theorem \ref{imbvr}) , $(C_{4},C_{5})$-free vertex-decomposable graphs 
(Theorem \ref{v4.9}), whisker graphs (Theorem \ref{v4.10}) etc. 
We also obtain some results on $\mathrm{v}$-number and propose some problems. 
The paper is arranged in the following manner.
\medskip

In Section 2, we discuss the Preliminaries. We recall some definitions, 
notations, basic concepts pertinent to Graph theory and Commutative 
Algebra and results from \cite{v}. In Section 3, 
our main result is the following:
\medskip

\noindent \textbf{Theorem \ref{vpol}.} Let $I$ be a monomial ideal. If there exists 
$$\mathfrak{p}=\big< x_{s_{1},b_{s_{1}}},\ldots, x_{s_{k},b_{s_{k}}}\big> \in \mathrm{Ass}(I(\mathrm{pol}))$$ such that $\mathrm{v}(I(\mathrm{pol}))=\mathrm{v}_{\mathfrak{p}}(I(\mathrm{pol}))$ and there is no embedded prime of $I$ properly containing $\big< x_{s_{1}},\ldots, x_{s_{k}}\big>$, then $$\mathrm{v}(I)=\mathrm{v}(I(\mathrm{pol})).$$

\noindent In general, we get $\mathrm{v}(I(\mathrm{pol}))\leq \mathrm{v}(I)$ (Corollary \ref{vcor3.5}). Also, in this section, we generalize some results proved 
in \cite{v}, related to v-number of square-free monomial ideals to 
arbitrary monomial ideals with special type which includes the monomial ideals having no embedded primes. We show the additive property of associated primes of monomial ideals (Lemma \ref{vassp}) and using this result we show that the additivity holds for v-numbers of monomial ideals i.e.,
\medskip

\noindent \textbf{Proposition \ref{v3.5}.}
Let $I_{1}\subset R_{1}=K[\mathbf{x}]$ and $I_{2}\subset R_{2}=K[\mathbf{y}]$ be two monomial ideals and consider $R=K[\mathbf{x,y}]$. Then we have
$$ \mathrm{v}(I_{1}R + I_{2}R) = \mathrm{v}(I_{1}) + \mathrm{v}(I_{2}).$$

\noindent In addition, we derive some properties 
(see Proposition \ref{v3.8}) of $\mathrm{v}(I)$ for any square-free 
monomial ideal $I$. For any graph $G$ we show that 
$\mathrm{v}(I(G))\leq \alpha_{0}(G)$ (Proposition \ref{v3.9}), 
where $\alpha_{0}(G)$ is the vertex covering number of $G$. 
In section 4, we relate $\mathrm{v}(I(G))$ with an invariant of 
$L(G)$, the line graph of $G$ (see Proposition \ref{v4.1}). We 
derive some properties (Proposition \ref{v4.2}) of $\mathrm{v}$-number 
of graphs (see Proposition \ref{v4.2}), which could be helpful to find the relation between 
the $\mathrm{v}$-number and the regularity of edge ideals. We find 
the following relations between 
$\mathrm{v}(I(G)),\,\, \mathrm{reg}(R/I(G)),\,\, \mathrm{im}(G)$ for certain classes of graphs $G$:
\medskip

\noindent\textbf{Theorems \ref{imbvr}, \ref{v4.9}, \ref{v4.10}.}
If $G$ is a bipartite graph or $(C_{4},C_{5})$-free vertex decomposable 
graph or whisker graph, then
 $$\mathrm{v}(I(G))\leq\mathrm{im}(G)\leq \mathrm{reg}(R/I(G)).$$
Also, we show that for a graph $G$, the difference between 
$\mathrm{v}(I(G))$ and $\mathrm{reg}(R/I(G))$ may be arbitrarily 
large (see Corollary \ref{v3.10}). In Proposition \ref{v4.18}, 
we try to relate Cohen-Macaulay property of $(R/I)$ with 
$\mathrm{v}(I^{\vee})$, where $I=I(\mathcal{C})$ is an 
edge ideal of a clutter $\mathcal{C}$, such that $\mathcal{C}$ 
can not be written as union of two disjoint clutters and 
$I^{\vee}$ denotes the Alexander dual ideal (Definition \ref{v4.17}) 
of $I$. In section 5, we give a counter example (Example \ref{v5.1}) 
to the problem given in \cite{v} and modify the question 
(see Question \ref{v5.2}). We also propose some open problems 
related to the $\mathrm{v}$-number in terms of regularity, depth 
and induced matching number.

\section{Preliminaries}
In this section, we will recall some basic definitions, results and notations of graph theory and commutative algebra. Also we mention some results, concepts and notations from \cite{v}.
\medskip

In $R$ we denote a monomial $x_{1}^{a_{1}}\cdots x_{n}^{a_{n}}$ by $\mathbf{x^{a}}$, where $\mathbf{a}=(a_{1},\ldots,a_{n})\in \mathbb{N}^{n}$ and $\mathbb{N}$ denotes the set of all non-negative integers. An ideal $I\subset R$ is called a monomial ideal if it is minimally generated by a set of monomials in $R$. The set of minimal monomial generators of $I$ is unique and it is denoted by $G(I)$. If $G(I)$ consists of only square-free monomials, then we say $I$ is a square-free monomial ideal.

\begin{definition}{\rm
A \textit{clutter} $\mathcal{C}$ is a pair of two sets $(V(\mathcal{C}),E(\mathcal{C}))$, where $V(\mathcal{C})$ is called the vertex set and $E(\mathcal{C})$ is a collection of subsets of $V(\mathcal{C})$, called edge set, such that no two elements (called edges) of $E(\mathcal{C})$ contains each other. A clutter is also known as \textit{simple hypergraph}. A simple graph is an example of a clutter, whose edges are of cardinality two.
} 
\end{definition}

Let $\mathcal{C}$ be a clutter on a vertex set $V(\mathcal{C})$. An edge $e\in E(\mathcal{C})$ is said to be \textit{incident} on a vertex $x\in V(\mathcal{C})$ if $x\in e$. A subset $C\subset V(\mathcal{C})$ is called a \textit{vertex cover} of $\mathcal{C}$ if any $e\in E(\mathcal{C})$ is incident to a vertex of $C$. If a vertex cover is minimal with respect to inclusion, then we call it a \textit{minimal vertex cover}. The cardinality of a minimum (smallest) vertex cover is known as the \textit{vertex covering number} of $\mathcal{C}$ and is denoted by $\alpha_{0}(\mathcal{C})$. Also a subset $A\subset V(\mathcal{C})$ is said to be \textit{stable} or \textit{independent} if $e\not\subset A$ for any $e\in E(\mathcal{C})$ and $A$ is said to be \textit{maximal independent set} if it is maximal with respect to inclusion. The number of vertices in a maximum (largest) independent set, denoted by $\beta_{0}(\mathcal{C})$, is called the \textit{independence number} of $\mathcal{C}$. Note that a vertex cover $C$ is a minimal vertex cover 
of $\mathcal{C}$ if and only if its complement $V(\mathcal{C})\setminus C$ is a maximal independent set.
\medskip

Let $\mathcal{C}$ be a clutter on the vertex set $V(\mathcal{C})=\{x_{1},\ldots,x_{n}\}$. Then for $A\subset V(\mathcal{C})$, we consider $X_{A}:=\prod_{x_{i}\in A} x_{i}$ as a square-free monomial in the polynomial ring $R=K[x_{1},\ldots,x_{n}]$ over a field $K$. The edge ideal of the clutter $\mathcal{C}$, denoted by $I(\mathcal{C})$, is the ideal in $R$ defined by
$$I(\mathcal{C})=\big<X_{e}\mid e\in E(\mathcal{C})\big>.$$
Set of square-free monomial ideals are in one to one correspondence with the set of clutters. For a simple graph $G$, the edge ideal $I(G)$ is generated by square-free quadratic monomials. It is a well known fact that
$$\mathrm{ht}(I(\mathcal{C}))=\alpha_{0}(\mathcal{C})\,\, \text{and}\,\, \mathrm{dim}(R/I(\mathcal{C}))=\beta_{0}(\mathcal{C}),$$
where $\mathrm{ht}(I(\mathcal{C}))$ is height of $I(\mathcal{C})$ and $\mathrm{dim}(R/I(\mathcal{C}))$ is the Krull dimension of $R/I(\mathcal{C})$. Note that $\alpha_{0}(\mathcal{C})+\beta_{0}(\mathcal{C})=n$.
\medskip

Let $A$ be a stable set of a clutter $\mathcal{C}$. Then the \textit{neighbor} set of $A$ in $\mathcal{C}$, denoted by $\mathcal{N}_{\mathcal{C}}(A)$, is defined by
$$\mathcal{N}_{\mathcal{C}}(A)=\{x_{i}\in V(\mathcal{C})\mid \{x_{i}\}\cup A\,\, \text{contains\,\,an\,\,edge\,\,of}\,\, \mathcal{C}\}.$$
We denote $\mathcal{N}_{\mathcal{C}}[A]:=\mathcal{N}_{\mathcal{C}}(A)\cup A$. Now we will recall some notations and results from \cite{v}. Let $\mathcal{F}_{\mathcal{C}}$ denotes the collection of all maximal stable sets of $\mathcal{C}$ and $\mathcal{A}_{\mathcal{C}}$ denotes the collection of those stable sets $A$ of $\mathcal{C}$ such that $\mathcal{N}_{\mathcal{C}}(A)$ is a minimal vertex cover of $\mathcal{C}$. The following theorems in \cite{v} gives the combinatorial formula for $\mathrm{v}(I(\mathcal{C}))$.

\begin{lemma}[\cite{v}, Lemma 3.4]\label{v2.2}
 Let $I=I(\mathcal{C})$ be the edge ideal of a clutter $\mathcal{C}$. Then the following hold:
 \begin{enumerate}
\item[$\mathrm{(a)}$] For $A\in \mathcal{A}_{\mathcal{C}}$, we have $(I: X_{A}) =\big<\mathcal{N}_{\mathcal{C}}(A)\big>$.
\item[$\mathrm{(b)}$] If $A$ is stable and $\mathcal{N}_{\mathcal{C}}(A)$ is a vertex cover, then $\mathcal{N}_{\mathcal{C}}(A)$ is a minimal vertex cover.
\item[$\mathrm{(c)}$] If $(I:f) =\mathfrak{p}$ for some $f\in R_{d}$ and some $\mathfrak{p}\in \mathrm{Ass}(I)$, then there is $A\in \mathcal{A}_{\mathcal{C}}$ with $\vert A\vert\leq d$ such that $(I:X_{A}) =\big<\mathcal{N}_{\mathcal{C}}(A)\big>=\mathfrak{p}$.
\item[$\mathrm{(d)}$] If $A\in\mathcal{F}_{\mathcal{C}}$, then $\mathcal{N}_{\mathcal{C}}(A) =V(C)\setminus A$ and $(I:X_{A}) =\big<\mathcal{N}_{\mathcal{C}}(A)\big>$.
 \end{enumerate}
\end{lemma}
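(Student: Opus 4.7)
The plan is to dispatch parts (a), (b), (d) as direct consequences of the combinatorial definitions, and then tackle the main part (c) by reducing an arbitrary witness $f$ to a single monomial with square-free support, at which point (a) applies verbatim. A useful standing observation, used several times, is that if $A$ is stable then $\mathcal{N}_{\mathcal{C}}(A) \cap A = \emptyset$, since an element in the intersection would force a whole edge into $A$.

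For (a), the inclusion $\langle \mathcal{N}_{\mathcal{C}}(A)\rangle \subseteq (I : X_A)$ is immediate: for each $x \in \mathcal{N}_{\mathcal{C}}(A)$ some edge $e$ sits inside $\{x\} \cup A$, so $X_e$ divides $x X_A$. For the reverse, I pick a monomial $m \in (I : X_A)$ and an edge $e$ with $X_e \mid m X_A$, so $e \subseteq \mathrm{supp}(m) \cup A$; since $\mathcal{N}_{\mathcal{C}}(A)$ is a vertex cover (as $A \in \mathcal{A}_{\mathcal{C}}$), some $y \in e \cap \mathcal{N}_{\mathcal{C}}(A)$, and the standing observation puts $y \in \mathrm{supp}(m)$. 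Part (b) is a one-line contradiction: if $\mathcal{N}_{\mathcal{C}}(A) \setminus \{x\}$ were still a cover, the edge $e \subseteq \{x\} \cup A$ witnessing $x \in \mathcal{N}_{\mathcal{C}}(A)$ would have all vertices other than $x$ in $A$, hence no vertex in the supposed smaller cover. For (d), stability gives $\mathcal{N}_{\mathcal{C}}(A) \subseteq V(\mathcal{C}) \setminus A$, while maximality of $A$ forces any $x \notin A$ to make $A \cup \{x\}$ non-stable, producing the edge that places $x$ in $\mathcal{N}_{\mathcal{C}}(A)$; then $V(\mathcal{C}) \setminus A$ is a minimal vertex cover, so $A \in \mathcal{A}_{\mathcal{C}}$ and (a) supplies the colon equality.

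The heart of the lemma is (c). I would first reduce to a single monomial $f$: writing $f = \sum_j c_j m_j$ with distinct monomials $m_j$ of degree $d$, the non-cancellation of the $g m_j$ for monomial $g$ yields the standard identity $(I : f) = \bigcap_j (I : m_j)$ of monomial ideals. Since $\mathfrak{p} = (I : f)$ is prime and each $(I : m_j) \supseteq \mathfrak{p}$, a prime-avoidance product argument (choose $g_j \in (I : m_j) \setminus \mathfrak{p}$ and get $\prod_j g_j \in \mathfrak{p}$, contradiction) forces $\mathfrak{p} = (I : m_j)$ for some $j$. Because $I$ is square-free, $(I : m_j) = (I : X_T)$ with $T := \mathrm{supp}(m_j)$, and $|T| \leq \deg(m_j) = d$. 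The set $T$ must be stable, for otherwise $X_T \in I$ would force $(I : X_T) = R \neq \mathfrak{p}$. It remains to identify $\mathcal{N}_{\mathcal{C}}(T)$, as a set of variables, with the generators of $\mathfrak{p}$: one inclusion is as in (a); for the other, any variable $x \in \mathfrak{p} = (I : X_T)$ satisfies $x X_T \in I$, producing an edge $e \subseteq \{x\} \cup T$, which by stability of $T$ must contain $x$, so $x \in \mathcal{N}_{\mathcal{C}}(T)$. Hence $\mathcal{N}_{\mathcal{C}}(T)$ is a vertex cover, and part (b) upgrades it to a minimal one, so $T \in \mathcal{A}_{\mathcal{C}}$ and $A := T$ is the required set.

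The main obstacle I anticipate is the first reduction in (c): replacing a general $f \in R_d$ by a single monomial. This relies on $(I : f)$ being a monomial ideal together with the non-cancellation identity $g f \in I \iff g m_j \in I$ for every monomial $g$, without which the prime-avoidance step has no foothold. Once this reduction is in place, the rest of (c) amounts to translating $\mathfrak{p}$ back into the combinatorial object $\mathcal{N}_{\mathcal{C}}(T)$ via the same $(I : X_A)$ mechanism that drives (a), and parts (a), (b), (d) are then routine manipulations of stability and $\mathcal{N}_{\mathcal{C}}$.
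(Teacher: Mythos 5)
The paper states this lemma as an imported result from \cite{v} (Lemma 3.4) and supplies no proof of its own, so there is nothing internal to compare against; measured against the original argument in \cite{v}, your proof is correct and takes essentially the same route for part (c) (reduce $f$ to one of its monomials $m_j$, pass to the square-free support $T=\mathrm{supp}(m_j)$, and identify $\mathfrak{p}$ with $\big<\mathcal{N}_{\mathcal{C}}(T)\big>$), while (a), (b), (d) are the same routine manipulations. The one step to phrase with care is the ``standard identity'' $(I:f)=\bigcap_j(I:m_j)$: this fails for arbitrary $f$, and the inclusion $(I:f)\subseteq (I:m_j)$ holds here only because $\mathfrak{p}=(I:f)$ is generated by variables, so one can test each generator $x_i$ against the cancellation-free sum $x_if=\sum_j c_jx_im_j$ --- a dependence you do acknowledge in your closing paragraph, so the argument is complete.
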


\begin{theorem}[\cite{v}, Theorem 3.5]\label{v2.3}
Let $I=I(\mathcal{C})$ be the edge ideal of a clutter $\mathcal{C}$. If $I$ is not prime, then $\mathcal{F}_{\mathcal{C}}\subset \mathcal{A}_{\mathcal{C}}$ and
$$ \mathrm{v}(I)=\mathrm{min}\{\vert A\vert : A\in\mathcal{A}_{\mathcal{C}} \}.$$
\end{theorem}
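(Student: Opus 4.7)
The statement has two parts---the inclusion $\mathcal{F}_{\mathcal{C}}\subset \mathcal{A}_{\mathcal{C}}$ and the combinatorial formula for $\mathrm{v}(I)$---and my plan is that both fall out almost mechanically from Lemma \ref{v2.2}, so the work is really in organizing the input correctly and in identifying what the hypothesis ``$I$ not prime'' is being used for. The overall strategy is: first establish the inclusion (which guarantees $\mathcal{A}_{\mathcal{C}}\neq\emptyset$, so the right-hand side of the formula is a well-defined non-negative integer), then prove the formula by two inequalities, each corresponding to one direction of the equivalence in Definition \ref{v1.1}.

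For $\mathcal{F}_{\mathcal{C}}\subset\mathcal{A}_{\mathcal{C}}$, let $A\in\mathcal{F}_{\mathcal{C}}$. Since $A$ is stable, Lemma \ref{v2.2}(d) immediately gives $\mathcal{N}_{\mathcal{C}}(A)=V(\mathcal{C})\setminus A$. I would then observe that $V(\mathcal{C})\setminus A$ is a vertex cover: every edge $e\in E(\mathcal{C})$ satisfies $e\not\subset A$ (stability), hence meets the complement. Then Lemma \ref{v2.2}(b) upgrades this to a \emph{minimal} vertex cover, which is exactly what it means for $A$ to lie in $\mathcal{A}_{\mathcal{C}}$. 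The role of ``$I$ not prime'' enters here only to rule out the degenerate case where $V(\mathcal{C})$ itself is the unique maximal stable set, in which case $\mathcal{N}_{\mathcal{C}}(V(\mathcal{C}))=\emptyset$ is not a vertex cover---this corresponds precisely to $I(\mathcal{C})=(0)$ or $I(\mathcal{C})$ equal to a variable ideal, i.e., to $I$ being prime.

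For the formula, write $m:=\min\{|A|:A\in\mathcal{A}_{\mathcal{C}}\}$, which is well defined by the previous step. For $\mathrm{v}(I)\leq m$, pick $A\in\mathcal{A}_{\mathcal{C}}$ realizing $m$; by Lemma \ref{v2.2}(a), $(I:X_A)=\langle\mathcal{N}_{\mathcal{C}}(A)\rangle$ is a minimal prime of $I$, and since $\deg(X_A)=|A|=m$, Definition \ref{v1.1} gives $\mathrm{v}(I)\leq m$. For the reverse inequality $m\leq\mathrm{v}(I)$, choose $f\in R_d$ with $d=\mathrm{v}(I)$ and $(I:f)=\mathfrak{p}\in\mathrm{Ass}(I)$; then Lemma \ref{v2.2}(c) directly produces an $A\in\mathcal{A}_{\mathcal{C}}$ with $|A|\leq d$, so $m\leq d=\mathrm{v}(I)$. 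I do not foresee a real obstacle---Lemma \ref{v2.2} has been designed to carry essentially all the content---so the only care needed is making sure the hypothesis of non-primality is invoked exactly where it is required (to guarantee a proper stable set exists giving rise to a minimal vertex cover, so that both $\mathcal{F}_{\mathcal{C}}$ and $\mathcal{A}_{\mathcal{C}}$ are non-empty and the minimum is attained).
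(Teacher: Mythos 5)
Your proposal is correct and follows the standard route: note that the paper itself does not prove Theorem \ref{v2.3} but cites it from Jaramillo--Villarreal, and your argument is essentially the proof given there, with all the content carried by Lemma \ref{v2.2} exactly as you organize it (parts (d) and (b) for the inclusion $\mathcal{F}_{\mathcal{C}}\subset\mathcal{A}_{\mathcal{C}}$, part (a) for $\mathrm{v}(I)\leq\min$, part (c) for the reverse). The only blemish is your parenthetical on where non-primality enters: $V(\mathcal{C})$ is stable only when $E(\mathcal{C})=\emptyset$, not when $I$ is a nonzero variable ideal, but this remark is not load-bearing and the proof itself is sound.
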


\noindent In this paper, we use Lemma \ref{v2.2} and Theorem \ref{v2.3} frequently.
\medskip
 
Let $V = \lbrace x_{1},\ldots, x_{n}\rbrace$. A \textit{simplicial complex} $\Delta$ on the 
vertex set $V$ is a collection
of subsets of $V$, with the following properties:
\begin{enumerate}
\item[(i)] $\lbrace x_{i}\rbrace\in \Delta$ for all $x_{i}\in V$;
\item[(ii)] $F\in \Delta$ and $G\subseteq F$ imply $G\in \Delta$.
\end{enumerate}
\medskip

\noindent An element $F\in \Delta$ is called a \textit{face} of $\Delta$. A maximal face 
of $\Delta$ is called a \textit{facet} of 
$\Delta$. For a vertex $v\in V$, $\mathrm{del}_{\Delta}(v)$ is a subcomplex, 
called \textit{deletion} of $v$, on the vertex set $V\setminus \{v\}$ given by
$$\mathrm{del}_{\Delta}(v):=\{F\in \Delta\mid v\not\in F\}$$
and the $\mathrm{lk}_{\Delta}(v)$, called \textit{link} of $v$, is the 
subcomplex of $\mathrm{del}_{\Delta}(v)$ given by
$$\mathrm{lk}_{\Delta}(v):=\{F\in \Delta\mid v\not\in F\,\,\text{and}\,\, F\cup\{v\}\in \Delta\}.$$
If $V$ is the only facet of $\Delta$, then $\Delta$ is called a \textit{simplex}. 
\begin{definition}{\rm
A simplicial complex $\Delta$ is called \textit{vertex decomposable} if either $\Delta$ is
a simplex, or $\Delta=\phi$, or $\Delta$ contains a vertex $v$ such that
\begin{enumerate}[(a)]
\item both of $\mathrm{del}_{\Delta}(v)$ and $\mathrm{lk}_{\Delta}(v)$ are vertex decomposable, 

\noindent and
\item every facet of $\mathrm{del}_{\Delta}(v)$ is a facet of $\Delta$.
\end{enumerate}
A vertex $v$ satisfying condition (2) is called a \textit{shedding} vertex of $\Delta$.
}
\end{definition}

\noindent The \textit{independence complex} $\Delta_{\mathcal{C}}$ of a clutter $\mathcal{C}$ is a simplicial complex whose faces are the stable sets of $\mathcal{C}$. Note that the Stanley-Reisner ideal $I_{\Delta_{\mathcal{C}}}$ is equal to $I(\mathcal{C})$.
\medskip

\begin{definition}{\rm
Let $I$ and $J$ be ideals of a ring $R$. The \textit{colon ideal} of $I$ with respect to $J$ is an ideal of $R$, denoted by $(I:J)$ and is defined as
$$(I:J)= \{u\in R \mid uv\in I\,\,\text{for\,\, all}\,\,v\in J\}.$$
For an element $f\in R$, $(I:f):=(I:(f))$. If $I$ is a monomial ideal and $f\in R$ is a monomial, then by (\cite{hhmon}, Proposition 1.2.2) we have
$$ (I:f)=\big <\dfrac{u}{\mathrm{gcd}(u,f)}\mid u\in G(I)\big>.$$
}
\end{definition}

Let $I$ be an ideal in a ring $R$. Then a presentation $I=\bigcap_{i=1}^{k} \mathfrak{q}_{i}$, where each $\mathfrak{q}_{i}$ is a primary ideal, is called a primary decomposition of $I$. A primary decomposition is irredundant if no $\mathfrak{q}_{i}$ can be omitted in the presentation and $\mathfrak{p}_{i}\not =\mathfrak{p}_{j}$ for $i\not=j$, where $\mathfrak{p}_{i}=\sqrt{\mathfrak{q}_{i}}$. Each $\mathfrak{p}_{i}$ is said to be an associated prime ideal of $I$ and the set of associated prime ideals of $I$ is denoted by $\mathrm{Ass}(I)$ or $\mathrm{Ass}(R/I)$. From (\cite{am}, Theorem 4.5) and (\cite{hhmon}, Corollary 1.3.10), we can say that the associated prime ideals of a monomial ideal $I$ are precisely the prime ideals of the form $(I:f)$ for some monomial $f\in R$. If a monomial ideal can not be written as proper intersection of two other monomial ideals, then we say it is irreducible. For a monomial ideal $I$ , a presentation of the form $I=\bigcap_{i=1}^{k} \mathfrak{q}_{i}$, where each $\mathfrak{q}_{i}$ is irreducible, is called an irredundant irreducible decomposition if no $\mathfrak{q}_{i}$ can be omitted in the decomposition.
By (\cite{hhmon}, Theorem 1.3.1 and Corollary 1.3.2), any monomial ideal can be written as an unique irredundant intersection of irreducible monomial ideals and the irreducible components are precisely generated by pure powers of the variables.
\medskip

\begin{lemma}[\cite{vil}, Lemma 6.3.37]
Let $I=I(\mathcal{C})$ be an edge ideal of a clutter $\mathcal{C}$. Then $\mathfrak{p}\in\mathrm{Ass}(I)$ if and only if $\mathfrak{p}=\big< C\big>$ for some minimal vertex cover $C$ of $\mathcal{C}$.
\end{lemma}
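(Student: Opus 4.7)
The plan is to exploit both the structure theorem for irredundant irreducible decompositions of square-free monomial ideals (recalled just before the statement) and Lemma \ref{v2.2}(d), which has just been proved. The key observation is that because $I(\mathcal{C})$ is square-free, its irredundant irreducible decomposition consists of prime ideals generated by subsets of the variables, so every associated prime is already of the form $\langle C\rangle$ for some $C\subset V(\mathcal{C})$, and moreover every such associated prime is a minimal prime of $I$.

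For the ($\Leftarrow$) direction, suppose $C$ is a minimal vertex cover of $\mathcal{C}$, and set $A:=V(\mathcal{C})\setminus C$. Since the complement of a minimal vertex cover is a maximal independent set, we have $A\in\mathcal{F}_{\mathcal{C}}$. Then Lemma \ref{v2.2}(d) gives $\mathcal{N}_{\mathcal{C}}(A)=V(\mathcal{C})\setminus A=C$ and $(I:X_{A})=\langle \mathcal{N}_{\mathcal{C}}(A)\rangle=\langle C\rangle$, so $\langle C\rangle\in\mathrm{Ass}(I)$ directly from the definition of associated prime.

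For the ($\Rightarrow$) direction, let $\mathfrak{p}\in\mathrm{Ass}(I)$. By the discussion preceding the statement, the irredundant irreducible decomposition $I=\bigcap_{i}\mathfrak{q}_{i}$ has each $\mathfrak{q}_{i}$ generated by pure powers of variables; squarefreeness forces all exponents to equal $1$, so each $\mathfrak{q}_{i}$ is a prime of the form $\langle C_{i}\rangle$. Since the associated primes of a monomial ideal are precisely those appearing as $(I:f)$ for some monomial $f$, and in this decomposition every such associated prime is among the $\mathfrak{q}_{i}$, we may write $\mathfrak{p}=\langle C\rangle$ for some $C\subset V(\mathcal{C})$. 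To see that $C$ is a vertex cover, note that $I\subset\mathfrak{p}$ forces $X_{e}\in\langle C\rangle$ for every edge $e\in E(\mathcal{C})$; since $\langle C\rangle$ is generated by the variables in $C$, at least one vertex of $e$ must lie in $C$. For minimality, any proper subset $C'\subsetneq C$ that were still a vertex cover would yield $I\subset\langle C'\rangle\subsetneq\langle C\rangle$, contradicting that $\langle C\rangle$ is a minimal prime of $I$ (which follows from the irredundance of the decomposition, as components generated by subsets of variables are comparable exactly when one is contained in the other).

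The only subtle point, and the closest thing to a real obstacle, is justifying cleanly that $\mathrm{Ass}(I)=\mathrm{Min}(I)$ for square-free monomial ideals — i.e., that every associated prime appears as an irreducible component of the irredundant decomposition. This is precisely the content of the preceding paragraph of the paper (citing \cite{am}, Theorem 4.5 and \cite{hhmon}, Corollary 1.3.10 together with the fact that the components are pure-power generated and hence prime in the square-free case), so the argument reduces to a clean bookkeeping of those references.
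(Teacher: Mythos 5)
The paper cites this lemma from Villarreal's book without reproducing a proof, so there is no in-paper argument to compare against; your proof is correct and is essentially the standard one. The ($\Leftarrow$) direction via Lemma \ref{v2.2}(d) applied to the maximal stable set $A=V(\mathcal{C})\setminus C$ is clean, and the ($\Rightarrow$) direction correctly reduces to the facts that a square-free monomial ideal is radical (so its irredundant irreducible components are monomial primes with no mutual containments, whence $\mathrm{Ass}(I)=\mathrm{Min}(I)$), that $I\subseteq\langle C\rangle$ forces $C$ to be a vertex cover, and that minimality of the prime forces minimality of the cover.
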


\begin{definition} [\cite{peeva}, Construction 21.7]{\rm 
The \textit{polarization} of monomials of type $x_{i}^{a_{i}}$ is defined as $x_{i}^{a_{i}}(\mathrm{pol})=\prod_{j=1}^{a_{i}} x_{i,j}$ and the \textit{polarization} of $\mathbf{x^{a}}=x_{1}^{a_{1}}\cdots x_{n}^{a_{n}}$ is defined to be 
$$\mathbf{x^{a}}(\mathrm{pol})=x_{1}^{a_{1}}(\mathrm{pol})\cdots x_{n}^{a_{n}}(\mathrm{pol}).$$ 
For a monomial ideal $I=\big< \mathbf{x^{a_{1}}},\ldots,\mathbf{x^{a_{n}}}\big>\subseteq R$, the \textit{polarization} $I(\mathrm{pol})$ is defined to be the square-free monomial ideal
$$ I(\mathrm{pol})=\big< \mathbf{x^{a_{1}}}(\mathrm{pol}),\ldots,\mathbf{x^{a_{n}}}(\mathrm{pol})\big>$$
in the ring $R(\mathrm{pol})=K[x_{i,j}\mid 1\leq i\leq n,\, 1\leq j\leq r_{i}]$ , where $r_{i}$ is the power of $x_{i}$ in the lcm of $\{\mathbf{x^{a_{1}}},\ldots,\mathbf{x^{a_{n}}}\}$.
}
\end{definition}

\begin{definition}{\rm
Let $F$ be a minimal graded free resolutions of $R/I$ as $R$ module such that
{\normalsize
$$\textbf{F.}\,\,\, 0\rightarrow \bigoplus_{j} R(-j)^{\beta_{k,j}}\rightarrow \cdots \rightarrow \bigoplus_{j} R(-j)^{\beta_{1,j}}\rightarrow R\rightarrow R/I\rightarrow 0, $$}
\noindent where $I$ is a graded ideal of the graded ring $R$. The \textit{Castelnuovo-Mumford regularity} of $R/I$ (in short \textit{regularity} of $R/I$) is denoted by $\mathrm{reg}(R/I)$ and defined as
 $$ \mathrm{reg}(R/I)=\mathrm{max}\{j-i\mid \beta_{i,j}\not=0\}.$$
 The \textit{projective dimension} of $R/I$ is defined to be
 $$\mathrm{pd}(R/I)=\mathrm{max}\{i\mid \beta_{i,j}\not=0\,\, \text{for}\,\, \text{some}\,\, j\}=k.$$
 }
\end{definition}

For a clutter $\mathcal{C}$ and $A\subset V(\mathcal{C})$, we define the \textit{induced clutter}  $\mathcal{C}\setminus A$ on the vertex set $V(\mathcal{C})\setminus A$ with $E(\mathcal{C}\setminus A)=\{e\in E(\mathcal{C})\mid e\cap A=\phi\}$. If $A=\{x_{i}\}$, $\mathcal{C}\setminus \{x_{i}\}$ is the cluuter, called deletion of $x_{i}$ and in this case $\big<I(\mathcal{C}\setminus \{x_{i}\}),x_{i}\big>=\big<I(\mathcal{C}),x_{i}\big>$. We often denote the ideal generated by $I$ and $f$ by 
$(I,f)$ instead of $\big< I,f\big>$.

\begin{definition}\label{v2.9}{\rm
Let $G$ be a graph. A set $M\subset E(G)$ is said to be a \textit{matching} in $G$ if no two edges in $M$ are adjacent, i.e., no two edges in $M$ share a common vertex. A matching $M=\{e_{1},\ldots,e_{k}\}$ is called an \textit{induced matching} in $G$ if the induced subgraph on the vertex set $\bigcup_{i=1}^{k}e_{i}$ contains only $M$ as the edge set, i.e., no two edges in $M$ are joined by an edge. The cardinality of a maximum (largest) induced matching in $G$ is known as the \textit{induced matching number} of $G$ and we denote it by $\mathrm{im}(G)$.
}
\end{definition}

\section{v-Number of Monomial Ideals via Polarization}
The $\mathrm{v}$-number of square-free monomial ideals has been discussed broadly in \cite{v}. 
In this section, we study the v-number of arbitrary monomial ideals using the technique of 
polarization and generalize some results of \cite{v}.

\begin{proposition}\label{v3.1}
Let $I$ be a monomial ideal and $f=x_{1}^{a_{1}}\cdots x_{n}^{a_{n}}$ be a monomial such that $(I:f)=\big<x_{s_{1}},\ldots, x_{s_{k}}\big>$, where $a_{i}\leq$ highest power of $x_{i}$ appears in $G(I)$. Then 
$$(I(\mathrm{pol}):f(\mathrm{pol}))=\big< x_{s_{1},b_{s_{1}}},\ldots, x_{s_{k},b_{s_{k}}}\big>,$$ where $b_{s_{i}}-1$ is the power of $x_{s_{i}}$ in $f$.
\end{proposition}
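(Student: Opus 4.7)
The plan is to compute both colon ideals explicitly via the standard formula $(J:g) = \langle u/\gcd(u,g) \mid u \in G(J)\rangle$ for monomial ideals, applied once to $(I,f)$ and once to $(I(\mathrm{pol}), f(\mathrm{pol}))$, and then verify the two inclusions. The key technical observation, which is used twice, is that polarization commutes with gcd coordinate-wise: if a variable $x_j$ appears with power $c$ in $u$ and $a_j$ in $f$, then in the polarized ring only the variables $x_{j,1},\ldots,x_{j,\min(c,a_j)}$ are common to $u(\mathrm{pol})$ and $f(\mathrm{pol})$.

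\medskip

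\textbf{Containment $\supseteq$.} First I would analyze which $u \in G(I)$ can produce the generator $x_{s_i}$ in the colon. Since $x_{s_i} \in (I:f)$, we have $x_{s_i} f \in I$, so some $u_i \in G(I)$ divides $x_{s_i}f$. Because $(I:f)\neq R$ forces $f \notin I$, a direct exponent comparison (using $a_j \leq r_j$) forces $\deg_{x_{s_i}}(u_i) = a_{s_i}+1 = b_{s_i}$ and $\deg_{x_j}(u_i) \leq a_j$ for all $j\neq s_i$, hence $u_i/\gcd(u_i,f) = x_{s_i}$. Polarizing: $x_{s_i,b_{s_i}}$ appears in $u_i(\mathrm{pol})$ but not in $f(\mathrm{pol})$ (as $\deg_{x_{s_i}}(f) = b_{s_i}-1$), while every other variable occurring in $u_i(\mathrm{pol})$ is matched by one in $f(\mathrm{pol})$. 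Thus $u_i(\mathrm{pol})/\gcd(u_i(\mathrm{pol}), f(\mathrm{pol})) = x_{s_i,b_{s_i}}$, showing $x_{s_i,b_{s_i}} \in (I(\mathrm{pol}):f(\mathrm{pol}))$ for each $i$.

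\medskip

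\textbf{Containment $\subseteq$.} Using the colon formula for $I(\mathrm{pol})$, every generator of $(I(\mathrm{pol}):f(\mathrm{pol}))$ has the form $u(\mathrm{pol})/\gcd(u(\mathrm{pol}), f(\mathrm{pol}))$ for some $u \in G(I)$. Since $u/\gcd(u,f)$ lies in $(I:f) = \langle x_{s_1},\ldots,x_{s_k}\rangle$, some $x_{s_i}$ divides it; this exactly translates to $\deg_{x_{s_i}}(u) \geq b_{s_i}$. Consequently $x_{s_i,b_{s_i}}$ divides $u(\mathrm{pol})$ but not $f(\mathrm{pol})$, so it divides $u(\mathrm{pol})/\gcd(u(\mathrm{pol}), f(\mathrm{pol}))$. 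This proves the reverse inclusion.

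\medskip

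The main obstacle is the exponent bookkeeping in the first step: pinning down $\deg_{x_{s_i}}(u_i) = b_{s_i}$ \emph{exactly} (and $\deg_{x_j}(u_i) \leq a_j$ for $j\neq s_i$), rather than only as inequalities, is what ensures the polarized quotient is a single variable instead of a larger monomial. A subsidiary issue is confirming that $x_{s_i,b_{s_i}}$ is an honest variable of $R(\mathrm{pol})$, i.e.\ that $b_{s_i} \leq r_{s_i}$; this follows from the existence of such a $u_i$ with $\deg_{x_{s_i}}(u_i) = b_{s_i}$ together with the standing hypothesis that $a_i$ is bounded by the highest power of $x_i$ appearing in $G(I)$. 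Once these points are secured, both inclusions follow immediately from the colon formula.
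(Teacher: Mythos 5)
Your proposal is correct and follows essentially the same route as the paper: both arguments compute the two colon ideals via the formula $(J:g)=\langle u/\gcd(u,g)\mid u\in G(J)\rangle$ and track exponents through polarization, using that $\gcd(u(\mathrm{pol}),f(\mathrm{pol}))=\gcd(u,f)(\mathrm{pol})$ to show each $x_{s_i}$ polarizes to the single variable $x_{s_i,b_{s_i}}$ and that every other generator of the polarized colon lands in $\langle x_{s_1,b_{s_1}},\ldots,x_{s_k,b_{s_k}}\rangle$. Your explicit attention to the exact equality $\deg_{x_{s_i}}(u_i)=b_{s_i}$ and to $b_{s_i}\leq r_{s_i}$ makes the bookkeeping slightly more transparent, but the substance is identical.
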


\begin{proof}
We know $(I:f)=\big< \dfrac{u}{\mathrm{gcd}(u,f)}\mid u\in G(I)\big>$. Therefore $x_{s_{i}}=\dfrac{u_{i}}{\mathrm{gcd}(u_{i},f)}$, for some $u_{i}\in G(I)$. Consider the ring $R(\mathrm{pol})$ corresponding to the ideal $I(\mathrm{pol})$. By the given condition on $f$ we have $f(\mathrm{pol})\in R(\mathrm{pol})$. Let $u_{i}=x_{1}^{b_{1}}\cdots x_{n}^{b_{n}}$. Then $\mathrm{gcd}(u_{i},f)=x_{1}^{b_{1}}\cdots x_{s_{i}}^{b_{s_{i}}-1}\cdots x_{n}^{b_{n}}$ and we get 
$$ \dfrac{u_{i}(\mathrm{pol})}{\mathrm{gcd}(u_{i}(\mathrm{pol}),f(\mathrm{pol}))}=x_{s_{i},b_{s_{i}}},$$
where $b_{s_{i}}-1$ is the power of $x_{s_{i}}$ in $f$. Now suppose for some $u\in G(I)$ we have $\dfrac{u}{\mathrm{gcd}(u,f)}\in \big< x_{m}\big>$, where $m\in \{s_{1},\ldots,s_{k}\}$. Let $u=x_{1}^{r_{1}}\cdots x_{n}^{r_{n}}$ and $\mathrm{gcd}(u,f)=x_{1}^{p_{1}}\cdots x_{n}^{p_{n}}$. Then we have $r_{m}-p_{m}\geq 1$ and $r_{i}-p_{i}\geq 0$ for all $i\in[n]\setminus \{m\}$. Therefore we can write
\begin{align*}
\dfrac{u(\mathrm{pol})}{\mathrm{gcd}(u(\mathrm{pol}),f(\mathrm{pol}))} =& \dfrac{u(\mathrm{pol})}{\mathrm{gcd}(u,f)(\mathrm{pol})}\\
=&(x_{1,p_{1}+1}\cdots x_{1,r_{1}})\cdots (x_{m,p_{m}+1}\cdots x_{m,r_{m}}) \\& \cdots (x_{n,p_{n}+1}\cdots x_{n,r_{n}}).
\end{align*}
Since $r_{m}\geq p_{m}+1$, it follows that $\dfrac{u(\mathrm{pol})}{\mathrm{gcd}(u(\mathrm{pol}),f(\mathrm{pol}))} \in \big< x_{m,p_{m}+1} \big>$. Now $x_{m}^{p_{m}}\mid f$ but $x_{m}^{p_{m}+1}\nmid f$ imply $p_{m}$ is the power of $x_{m}$ in $f$. Therefore $x_{m,p_{m+1}}\in (I(\mathrm{pol}):f(\mathrm{pol}))$ and hence 
$$(I(\mathrm{pol}):f(\mathrm{pol}))=\big< x_{s_{1},b_{s_{1}}},\ldots, x_{s_{k},b_{s_{k}}}\big>,$$ where $b_{s_{i}}-1$ is the power of $x_{s_{i}}$ in $f$.
\end{proof}

\begin{lemma}\label{vlemma}
Let $I\subset R$ be a monomial ideal and $f\not\in I$ be a monomial in $R$. If $\big< x_{s_{1}},\ldots, x_{s_{k}}\big>\subseteq (I:f)$, where all $s_{i}$ are distinct, then there exists a monomial $g\in R$ such that 
$$ (I:g)=\big< x_{s_{1}},\ldots,x_{s_{r}}\big>\,\, \text{and}\,\, f\mid g,$$
for some $r\geq k$.
\end{lemma}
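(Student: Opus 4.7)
The plan is to produce the required $g$ as a monomial multiple of $f$ for which the colon ideal $(I:g)$ is itself generated by variables. Once this is achieved, the containment $(I:f)\subseteq (I:g)$ (which holds because $f\mid g$) automatically places each of $x_{s_{1}},\ldots,x_{s_{k}}$ among the variable generators of $(I:g)$, and the result follows after relabelling.

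The main tool is a standard Noetherian maximality argument applied to the family
$$\mathcal{S} = \{(I:g) \mid g \in R \text{ a monomial},\; f \mid g,\; g \notin I\}.$$
This family is nonempty, since $(I:f)\in \mathcal{S}$ by the hypothesis $f\notin I$. Because $R$ is Noetherian, every nonempty family of ideals of $R$ has a maximal element with respect to inclusion, so $\mathcal{S}$ admits a maximal element; call it $(I:g_{0})$ for some monomial $g_{0}$ with $f\mid g_{0}$ and $g_{0}\notin I$.

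The crucial step is to show that every minimal generator of $(I:g_{0})$ is a variable. Suppose for contradiction that there is a minimal monomial generator $u$ of $(I:g_{0})$ with $\deg u\geq 2$, and write $u=x_{j}v$ with $\deg v\geq 1$. By the minimality of $u$, neither $x_{j}$ nor $v$ lies in $(I:g_{0})$ (any minimal generator dividing $v$ or $x_{j}$ would properly divide $u$). Set $h=x_{j}g_{0}$; then $h\notin I$ because $x_{j}\notin (I:g_{0})$, and $f\mid h$, so $(I:h)\in\mathcal{S}$. However, $(I:g_{0})\subseteq (I:h)$, and the identity $v\cdot h = vx_{j}g_{0} = ug_{0}\in I$ gives $v\in (I:h)\setminus (I:g_{0})$. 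Hence $(I:g_{0})\subsetneq (I:h)$, contradicting the maximality of $(I:g_{0})$.

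Consequently $(I:g_{0})=\big<x_{t_{1}},\ldots,x_{t_{r}}\big>$ for some indices $t_{i}$, and $(I:f)\subseteq (I:g_{0})$ forces $\{s_{1},\ldots,s_{k}\}\subseteq \{t_{1},\ldots,t_{r}\}$, so $r\geq k$. Relabelling the $t_{i}$ so that $t_{i}=s_{i}$ for $1\leq i\leq k$ and renaming the remaining indices as $s_{k+1},\ldots,s_{r}$, the choice $g=g_{0}$ satisfies both $f\mid g$ and $(I:g)=\big<x_{s_{1}},\ldots,x_{s_{r}}\big>$, as required. The only delicate point is the maximality argument establishing that $(I:g_{0})$ is generated by variables; everything else is routine bookkeeping with monomial colon ideals.
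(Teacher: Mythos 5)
Your argument is correct, but it is genuinely different from the one in the paper. The paper proves the lemma constructively: writing $(I:f)=\big<u/\mathrm{gcd}(u,f)\mid u\in G(I)\big>$, it runs through the minimal generators $u$ of $I$ one at a time, and whenever the quotient $h=u/\mathrm{gcd}(u,f)$ fails to lie in $\big<x_{s_{1}},\ldots,x_{s_{k}}\big>$ it replaces $f$ by $fh/x_{s_{k+1}}$ for a new variable $x_{s_{k+1}}$ dividing $h$, checking that this forces that quotient to become exactly $x_{s_{k+1}}$ while preserving the quotients already arranged; after finitely many steps the accumulated multiple of $f$ is the desired $g$. You instead take a maximal element $(I:g_{0})$ of the family of colon ideals $(I:g)$ over monomial multiples $g$ of $f$ outside $I$, and show by the exchange $h=x_{j}g_{0}$, $v\cdot h=ug_{0}\in I$ that maximality forces every minimal generator of $(I:g_{0})$ to be a variable; the containment $(I:f)\subseteq(I:g_{0})$ then yields $r\geq k$. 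Your route trades the paper's explicit formula for $g$ (which is occasionally useful elsewhere in the paper, e.g.\ for degree bookkeeping) for a shorter and more robust argument: you never need to verify that earlier quotients survive each modification of $f$, and the only nontrivial verifications are that $h\notin I$ (since $x_{j}\notin(I:g_{0})$) and that $v\notin(I:g_{0})$ by minimality of $u$ --- both of which you handle correctly. For the purposes of this lemma, where only the existence of $g$ with $f\mid g$ is needed, your proof is a perfectly valid substitute.
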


\begin{proof}
We know that $(I:f)=\big<\dfrac{u}{\mathrm{gcd}(u,f)}\mid u\in G(I)\big>.$ If we have $\big<x_{s_{1}},\ldots, x_{s_{k}}\big>=(I:f)$, then take $g=f$ and we are done. So we may assume $\big<x_{s_{1}},\ldots,x_{s_{k}}\big>\subsetneq (I:f)$. Then for each $1\leq i\leq k$ there exists $u_{i}\in G(I)$ such that $\dfrac{u_{i}}{\mathrm{gcd}(u_{i},f)}=x_{s_{i}}$. Let $G(I)=\{u_{1},\ldots,u_{k},u_{k+1},\ldots,u_{k+m}\}$. If $\dfrac{u_{k+1}}{\mathrm{gcd}(u_{k+1},f)}$ is divided by any of $x_{s_{1}},\ldots, x_{s_{k}}$, then $\dfrac{u_{k+1}}{\mathrm{gcd}(u_{k+1},f)}\in \big< x_{s_{1}},\ldots,x_{s_{k}}\big>$ and set $f_{1}=f$. If $\dfrac{u_{k+1}}{\mathrm{gcd}(u_{k+1},f)}=h_{1}$ is not divided by any of $x_{s_{1}},\ldots, x_{s_{k}}$, then $h_{1}$ is a non-constant monomial in $K[x_{s_{k+1}},\ldots, x_{s_{n}}]$ as $f\not\in I$. Without loss of generality we assume $x_{s_{k+1}}\mid h_{1}$ and set $f_{1}=\dfrac{fh_{1}}{x_{s_{k+1}}}$. Then $\dfrac{u_{i}}{\mathrm{gcd}(u_{i},f_{1})}=x_{s_{i}}$ is clear for each $1\leq i\leq k$. Now

\begin{align*}
 \dfrac{u_{k+1}}{\mathrm{gcd}(u_{k+1},f_{1})}&=\dfrac{u_{k+1}}{\mathrm{gcd}\big(u_{k+1},\dfrac{fh_{1}}{x_{s_{k+1}}}\big)}\\&= \dfrac{u_{k+1}}{\mathrm{gcd}(u_{k+1},f)\,\mathrm{gcd}\big(\dfrac{u_{k+1}}{\mathrm{gcd}(u_{k+1},f)},\dfrac{h_{1}}{x_{s_{k+1}}}\big)}\\ &= \dfrac{h_{1}}{\mathrm{gcd}\big(h_{1},\dfrac{h_{1}}{x_{s_{k+1}}}\big)}\\&= x_{s_{k+1}}.
 \end{align*}
 
 Therefore we get $\big< x_{s_{1}},\ldots,x_{s_{k+1}}\big>\subseteq (I:f_{1})$. Continue this process with the remaining elements of $G(I)$. Finally, we will get $f_{m}=g$ such that
 $$ (I:g)=\big< x_{s_{1}},\ldots,x_{s_{r}}\big>\,\, \text{and}\,\, f\mid g,$$
 for some $r\geq k$.
\end{proof}

\begin{proposition}\label{v3.2}
Let $I$ be a monomial ideal and consider $$\mathfrak{p}=\big< x_{s_{1},b_{s_{1}}},\ldots, x_{s_{k},b_{s_{k}}}\big> \in \mathrm{Ass}(I(\mathrm{pol}))$$ such that there exists no embedded prime of $I$ containing $\big< x_{s_{1}},\ldots, x_{s_{k}}\big>$. Let $D=\{d\mid \exists\, M\in R_{d}\,\, \text{with}\,\, (I(\mathrm{pol}):M)=\mathfrak{p}\}$. Then to find $\mathrm{min}\,D$ we can choose $M$ in such a way that $(I(\mathrm{pol}):M)=\mathfrak{p}$ and for that $M$ we will get a monomial $f$ with $\mathrm{deg}\, f\leq \mathrm{deg}\, M$ such that $(I:f)=\big< x_{s_{1}},\ldots, x_{s_{k}}\big>$.
\end{proposition}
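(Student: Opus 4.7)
My plan is to depolarize a minimum-degree witness. Let $\phi\colon R(\mathrm{pol})\to R$ be the $K$-algebra map $x_{i,j}\mapsto x_i$. Since $\phi(\mathbf{x^{a_j}}(\mathrm{pol}))=\mathbf{x^{a_j}}$ for every minimal generator of $I$, we have $\phi(I(\mathrm{pol}))=I$. A standard fact about polarization (which I would recall or prove as a preliminary observation) gives $\phi(\mathfrak{p})=\langle x_{s_1},\ldots,x_{s_k}\rangle\in\mathrm{Ass}(I)$; this will be crucial for applying the hypothesis.

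First I would fix an $M$ of minimum degree with $(I(\mathrm{pol}):M)=\mathfrak{p}$. Since $I(\mathrm{pol})$ is square-free, Lemma \ref{v2.2}(c) applied to its underlying clutter $\mathcal{C}$ supplies a stable set $A\in\mathcal{A}_{\mathcal{C}}$ with $|A|\le\deg M$, $(I(\mathrm{pol}):X_A)=\langle\mathcal{N}_{\mathcal{C}}(A)\rangle=\mathfrak{p}$, and in particular $\mathcal{N}_{\mathcal{C}}(A)=\{x_{s_1,b_{s_1}},\ldots,x_{s_k,b_{s_k}}\}$. Replacing $M$ by $X_A$, I would set $f:=\phi(X_A)$, a monomial of degree $|A|\le\deg M$, and claim $(I:f)=\langle x_{s_1},\ldots,x_{s_k}\rangle$.

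The forward inclusion is immediate: for each $i$, $x_{s_i,b_{s_i}}X_A\in I(\mathrm{pol})$ gives $x_{s_i}f=\phi(x_{s_i,b_{s_i}}X_A)\in I$, so $\langle x_{s_1},\ldots,x_{s_k}\rangle\subseteq (I:f)$. Granted $f\notin I$, Lemma \ref{vlemma} then produces a monomial $g$ with $f\mid g$ and $(I:g)=\langle x_{s_1},\ldots,x_{s_r}\rangle$ for some $r\ge k$. If $r>k$, this prime is an associated prime of $I$ strictly containing $\phi(\mathfrak{p})=\langle x_{s_1},\ldots,x_{s_k}\rangle$, hence an embedded prime of $I$ containing $\langle x_{s_1},\ldots,x_{s_k}\rangle$, contradicting the hypothesis. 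So $r=k$, and a direct inspection of Lemma \ref{vlemma}'s construction shows no augmentation step occurred, forcing $g=f$ and giving equality $(I:f)=\langle x_{s_1},\ldots,x_{s_k}\rangle$.

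The delicate step is verifying $f\notin I$: naively depolarizing a square-free $X_A$ can land in $I$, and the hypothesis on embedded primes is precisely what rules this out. If $\mathbf{x^{a_j}}\mid f$ for some $\mathbf{x^{a_j}}\in G(I)$, then $\mathbf{x^{a_j}}(\mathrm{pol})\nmid X_A$ (since $X_A\notin I(\mathrm{pol})$), so certain variables $x_{i,l}$ of $\mathbf{x^{a_j}}(\mathrm{pol})$ lie outside $A$; tracking which vertices lie in $\mathcal{N}_{\mathcal{C}}(A)$ shows these missing variables are forced into $\mathfrak{p}$. From here the argument proceeds by either exhibiting a shifted stable set $A'$ with $\mathcal{N}(A')=\mathcal{N}(A)$, $|A'|\le|A|$ and $\phi(X_{A'})\notin I$, or deriving directly that $I$ would have an embedded prime containing $\langle x_{s_1},\ldots,x_{s_k}\rangle$. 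This is the main obstacle, and the care required here reflects the role of the embedded-prime hypothesis.
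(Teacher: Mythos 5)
Your overall architecture (reduce to $M=X_A$ via Lemma \ref{v2.2}(c), produce an $f$ of degree at most $|A|$ with $\big\langle x_{s_1},\ldots,x_{s_k}\big\rangle\subseteq (I:f)$, then upgrade to equality by combining Lemma \ref{vlemma} with the embedded-prime hypothesis) matches the paper's, and your endgame via Lemma \ref{vlemma} is sound. But there is a genuine gap exactly where you flag it: you never establish that $f=\phi(X_A)\notin I$, and this is not a routine verification. Depolarizing a stable set in $\mathcal{A}_{\mathcal{C}}$ really can land inside $I$: for $I=\big\langle x^2y,\,x^3z\big\rangle$ the set $A=\{x_{1,1},x_{1,3},y_{1,1}\}$ is stable with $\mathcal{N}_{\mathcal{C}}(A)=\{x_{1,2}\}$ a minimal vertex cover, yet $\phi(X_A)=x^2y\in I$. (In that example the embedded-prime hypothesis happens to fail, so it is not a counterexample to the proposition, but it shows your claim needs a real argument that invokes the hypothesis, and "exhibit a shifted $A'$ \emph{or} derive an embedded prime directly" is a plan, not a proof.) Since the forward inclusion, the degree bound, and the equality step all hinge on having a valid $f\notin I$, the proof is incomplete at its load-bearing point.

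The paper sidesteps this entirely by not taking $f=\phi(X_A)$. It first uses Faridi's correspondence (\cite{far}, Proposition 2.5) to attach to $\mathfrak{p}$ an irreducible component $\mathfrak{q}=\big\langle x_{s_1}^{a_{s_1}},\ldots,x_{s_k}^{a_{s_k}}\big\rangle$ of $I$, picks for each $i$ an edge $e_i\subseteq A\cup\{x_{s_i,b_{s_i}}\}$ with $X_{e_i}=u_i(\mathrm{pol})$, shifts $A$ to the stable set $B'=\bigcup_i\big(e_i\setminus\{x_{s_i,a_{s_i}}\}\big)$ of size at most $|A|$, and defines $f=\mathrm{lcm}\{u_1/x_{s_1},\ldots,u_k/x_{s_k}\}$. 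For this $f$ one checks directly that $u_i/\gcd(u_i,f)=x_{s_i}$ for each $i$, so $\deg f=|B'|\le|A|$ and $\big\langle x_{s_1},\ldots,x_{s_k}\big\rangle\subseteq(I:f)$ come for free, and only then does the Lemma \ref{vlemma} argument (which you have correctly) finish the equality. If you want to salvage your depolarization route, you would need to prove the shifting step you allude to; otherwise adopt the lcm construction.
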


\begin{proof}
Since $\mathfrak{p}\in \mathrm{Ass}(I(\mathrm{pol}))$, by (\cite{far}, Proposition 2.5), there exists an irredundant irreducible primary component of $I$ such that $\mathfrak{q}=\big< x_{s_{1}}^{a_{s_{1}}},\ldots, x_{s_{k}}^{a_{s_{k}}}\big>$ , where $a_{s_{i}}\geq b_{s_{i}}\geq 1$ for $i=1,\ldots,k$. Let $\mathcal{C}$ be the clutter corresponding to the ideal $I(\mathrm{pol})$ i.e., $I(\mathcal{C})=I(\mathrm{pol})$. By Lemma \ref{v2.2}, there exists a stable set $A$ of $\mathcal{C}$ such that 
$$(I(\mathrm{pol}):X_{A})=\big< \mathcal{N}_{\mathcal{C}}(A)\big>=\mathfrak{p}.$$
Now there exists $e_{i}\in E(\mathcal{C})$ such that $e_{i}\subseteq A\cup \{x_{s_{i},b_{s_{i}}}\}$ for $1\leq i\leq k$. For each $1\leq i\leq k$ we have $X_{e_{i}}=u_{i}(\mathrm{pol})$ for some $u_{i}\in G(I)$. Again $u_{i}\in \mathfrak{q}$ implies $x_{s_{j_{i}}}^{a_{s_{j_{i}}}}$ divides $u_{i}$ for some $1\leq j_{i}\leq k$. Now $x_{s_{j_{i}},b_{s_{j_{i}}}} \not\in A$ and $e_{i}\subseteq A\cup \{x_{s_{i},b_{s_{i}}}\}$ imply $s_{j_{i}}=s_{i}$. Let $c_{s_{i}}$ be the power of $x_{s_{i}}$ in $u_{i}$. Then $c_{s_{i}}\geq a_{s_{i}}$. Now consider the prime ideal $\mathfrak{p}^{\prime}= \big< x_{s_{1},a_{s_{1}}},\ldots, x_{s_{k},a_{s_{k}}}\big> \in \mathrm{Ass}(I(\mathrm{pol}))$. Let 
$$B=A\cup_{i=1}^{k}\{x_{s_{i},b_{s_{i}}}\} \setminus \cup_{i=1}^{k} \{x_{s_{i},a_{s_{i}}}\}.$$ For any $u\in G(I)$ there exists some $x_{s_{j}}^{a_{s_{j}}}$ which divides $u$, where $1\leq j\leq k$. Therefore $x_{s_{j},a_{s_{j}}} \mid u(\mathrm{pol})$, which imply corresponding edge of $u(\mathrm{pol})$ in $E(\mathcal{C})$ is not contained in $B$ and hence $B$ is a stable set in $\mathcal{C}$. Also it is clear that $$\vert A\vert =\vert B\vert\,\, \text{and}\,\, (I(\mathrm{pol}):X_{B})=\big< \mathcal{N}_{\mathcal{C}}(B)\big>=\mathfrak{p}^{\prime}.$$
Take the stable set $B^{\prime}=\cup_{i=1}^{k}(e_{i}\setminus \{x_{s_{i},a_{s_{i}}}\})\subseteq B$. Again using Lemma \ref{v2.2} we can write
$$\vert B^{\prime}\vert \leq \vert B\vert\,\, \text{and}\,\,(I(\mathrm{pol}):X_{B^{\prime}})=\big< \mathcal{N}_{\mathcal{C}}(B^{\prime})\big>=\mathfrak{p}^{\prime}.$$
Now consider the monomial $f=\mathrm{lcm}\,\{\dfrac{u_{1}}{x_{s_{1}}},\ldots, \dfrac{u_{k}}{x_{s_{k}}}\}$. Then we have $\mathrm{deg}(f)=\vert B^{\prime}\vert$ and $x_{s_{i}}f\in I$ for all $1\leq i\leq k$ which imply $\big< x_{s_{1}},\ldots, x_{s_{k}}\big> \subseteq (I:f)$. Since 
{\normalsize
$$X_{B^{\prime}}=\mathrm{lcm}\,\{\dfrac{u_{1}(\mathrm{pol})}{x_{s_{1},a_{s_{1}}}},\ldots, \dfrac{u_{k}(\mathrm{pol})}{x_{s_{k},a_{s_{k}}}}\}\,\, \text{and}\,\, \dfrac{u_{i}(\mathrm{pol})}{\mathrm{gcd}\,(u_{i}(\mathrm{pol}), X_{B^{\prime}})}=x_{s_{i},a_{s_{i}}},$$
}
we have $\dfrac{u_{i}}{\mathrm{gcd}\,(u_{i},f)}=x_{s_{i}}$ for all $1\leq i \leq k$. Let $u$ be a minimal generator of $I$ other than $u_{1},\ldots,u_{k}$. If $\dfrac{u}{\mathrm{gcd}\,(u,f)}\not\in \big< x_{s_{1}},\ldots, x_{s_{k}}\big>$, then by Lemma \ref{vlemma} there exists an associated prime ideal of $I$ properly containing $\big< x_{s_{1}},\ldots, x_{s_{k}}\big>$. This gives a contradiction to our assumption and so $\dfrac{u}{\mathrm{gcd}\,(u,f)}\in \big< x_{s_{1}},\ldots, x_{s_{k}}\big>$. Hence $(I:f)=\big< x_{s_{1}},\ldots, x_{s_{k}}\big>$ and $\mathrm{deg}(f)=\vert B^{\prime}\vert\leq \vert B\vert=\vert A\vert$. To find $\mathrm{min}\,D$ we can choose $M= X_{A}$ for some stable set $A$ in $\mathcal{C}$ and this completes the proof.
\end{proof}

\begin{theorem}\label{vpol}
Let $I$ be a monomial ideal. If there exists 
$$\mathfrak{p}=\big< x_{s_{1},b_{s_{1}}},\ldots, x_{s_{k},b_{s_{k}}}\big> \in \mathrm{Ass}(I(\mathrm{pol}))$$ such that $\mathrm{v}(I(\mathrm{pol}))=\mathrm{v}_{\mathfrak{p}}(I(\mathrm{pol}))$ and there is no embedded prime of $I$ properly containing $\big< x_{s_{1}},\ldots, x_{s_{k}}\big>$, then $$\mathrm{v}(I)=\mathrm{v}(I(\mathrm{pol})).$$
\end{theorem}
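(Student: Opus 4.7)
The plan is to establish the two inequalities $\mathrm{v}(I(\mathrm{pol})) \leq \mathrm{v}(I)$ and $\mathrm{v}(I) \leq \mathrm{v}(I(\mathrm{pol}))$ separately and combine them.

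For the direction $\mathrm{v}(I(\mathrm{pol})) \leq \mathrm{v}(I)$, which is the easier and more general half (i.e.\ Corollary \ref{vcor3.5}), I would pick a monomial $f$ of degree $\mathrm{v}(I)$ such that $(I:f)$ is an associated prime of $I$; for a monomial ideal this prime automatically has the form $\langle x_{s_1}, \ldots, x_{s_k} \rangle$. First I would argue that minimality of $\deg f$ forces each power $a_i$ of $x_i$ in $f$ to be bounded by the highest power $r_i$ of $x_i$ appearing in $G(I)$, since otherwise one could replace $f$ by $f/x_i$ without changing the colon ideal and strictly reduce its degree. With the powers bounded in this way, Proposition \ref{v3.1} applies directly and yields $(I(\mathrm{pol}):f(\mathrm{pol})) = \langle x_{s_1,b_{s_1}}, \ldots, x_{s_k,b_{s_k}} \rangle \in \mathrm{Ass}(I(\mathrm{pol}))$ with $\deg f(\mathrm{pol}) = \deg f = \mathrm{v}(I)$, giving the desired inequality.

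For the reverse direction $\mathrm{v}(I) \leq \mathrm{v}(I(\mathrm{pol}))$, which is where the extra hypothesis enters, I would apply Proposition \ref{v3.2} to the distinguished prime $\mathfrak{p}$ supplied by the theorem. The equality $\mathrm{v}(I(\mathrm{pol})) = \mathrm{v}_{\mathfrak{p}}(I(\mathrm{pol}))$ lets us pick a monomial $M \in R(\mathrm{pol})$ of degree exactly $\mathrm{v}(I(\mathrm{pol}))$ with $(I(\mathrm{pol}):M) = \mathfrak{p}$, and the embedded-prime hypothesis is precisely what allows Proposition \ref{v3.2} to manufacture a monomial $f \in R$ satisfying $(I:f) = \langle x_{s_1}, \ldots, x_{s_k} \rangle$ and $\deg f \leq \deg M = \mathrm{v}(I(\mathrm{pol}))$. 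Since a colon of the form $(I:f)$ that turns out to be prime automatically lies in $\mathrm{Ass}(I)$, this gives $\mathrm{v}(I) \leq \deg f \leq \mathrm{v}(I(\mathrm{pol}))$. Combining with the first inequality completes the proof.

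Essentially all of the technical work has been packaged into Propositions \ref{v3.1} and \ref{v3.2}, so the argument itself should be short. The one subtlety I expect to watch is the slight mismatch between the theorem's phrase "no embedded prime of $I$ \emph{properly} containing $\langle x_{s_1}, \ldots, x_{s_k} \rangle$" and the hypothesis "no embedded prime of $I$ containing $\langle x_{s_1}, \ldots, x_{s_k} \rangle$" used in Proposition \ref{v3.2}; one should verify that the invocation of Lemma \ref{vlemma} inside the proof of Proposition \ref{v3.2} only ever produces an associated prime that strictly enlarges $\langle x_{s_1}, \ldots, x_{s_k} \rangle$, so that the weaker condition of the theorem is sufficient. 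Once this bookkeeping is checked, the proof is a clean two-line assembly of the two propositions.
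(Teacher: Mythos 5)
Your proposal is correct and follows essentially the same route as the paper: the inequality $\mathrm{v}(I(\mathrm{pol}))\leq \mathrm{v}(I)$ via Proposition \ref{v3.1} applied to a degree-minimal $f$ with $(I:f)\in\mathrm{Ass}(I)$, and the reverse inequality via Proposition \ref{v3.2} applied to the distinguished prime $\mathfrak{p}$. Your two added observations --- that degree-minimality forces the exponents of $f$ into the range where Proposition \ref{v3.1} applies, and that the proof of Proposition \ref{v3.2} only ever contradicts the existence of an embedded prime \emph{properly} containing $\langle x_{s_1},\ldots,x_{s_k}\rangle$ --- are both accurate and in fact make the argument slightly more careful than the paper's own write-up.
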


\begin{proof}
Let $\mathfrak{p}^{\prime}=\{x_{t_{1}},\ldots,x_{t_{r}}\}\in \mathrm{Ass}(I)$ and $f$ be the monomial such that 
$$(I:f)=\mathfrak{p}^{\prime}\,\, \text{with}\,\, \mathrm{deg}(f)=\mathrm{v}(I).$$
Then power of any $x_{i}$ in $f$ is less or equal to the highest power of $x_{i}$ appears in $G(I)$. Then by Proposition \ref{v3.1} we have
 $$(I(\mathrm{pol}):f(\mathrm{pol}))=\big< x_{t_{1},b_{t_{1}}},\ldots, x_{t_{r},b_{t_{r}}}\big>\in \mathrm{Ass}(I(\mathrm{pol})),$$ where $b_{t_{i}}-1$ is the power of $x_{t_{i}}$ in $f$ for each $1\leq i\leq r$. Thus, we have 
 $$\mathrm{v}(I(\mathrm{pol}))\leq \mathrm{v}(I),$$
 as $\mathrm{deg}(f)=\mathrm{deg}(f(\mathrm{pol}))$. Again there exists $\mathfrak{p}\in \mathrm{Ass}(I(\mathrm{pol}))$ and a square-free monomial $M$ such that
 $$(I(\mathrm{pol}):M)=\mathfrak{p}\,\, \text{with}\,\, \mathrm{deg}(M)=\mathrm{v}(I(\mathrm{pol})).$$
Then by Proposition \ref{v3.2} there exists a monomial $g$ such that 
$$(I:g)=\{x_{s_{1}},\ldots,x_{s_{k}}\}\in \mathrm{Ass}(I)\,\, \text{with}\,\, \mathrm{deg}(g)\leq \mathrm{deg}(M).$$
So we get $\mathrm{v}(I)\leq \mathrm{v}(I(\mathrm{pol}))$ and hence $\mathrm{v}(I)=\mathrm{v}(I(\mathrm{pol})).$
\end{proof}

\begin{corollary}\label{vcor3.5}
For a monomial ideal $I$, we have $\mathrm{v}(I(\mathrm{pol}))\leq \mathrm{v}(I)$. Moreover, if $I$ has no embedded prime, then $\mathrm{v}(I(\mathrm{pol}))=\mathrm{v}(I)$.
\end{corollary}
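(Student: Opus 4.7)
The plan is to deduce both assertions directly from results already established in this section—specifically Proposition \ref{v3.1} and Theorem \ref{vpol}—so the corollary amounts to an observation rather than a new computation.

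For the general inequality $\mathrm{v}(I(\mathrm{pol})) \leq \mathrm{v}(I)$, I would mirror the first half of the proof of Theorem \ref{vpol}. Choose a monomial $f$ with $(I:f) = \mathfrak{p}' \in \mathrm{Ass}(I)$ and $\deg(f) = \mathrm{v}(I)$. To apply Proposition \ref{v3.1}, one has to check that the exponent of each $x_i$ in $f$ does not exceed its maximum exponent in $G(I)$. This is automatic from minimality: if $x_i^{a_i} \mid f$ with $a_i$ strictly greater than the top power $M_i$ of $x_i$ in $G(I)$, then for every $u \in G(I)$ the gcd $\gcd(u,f)$ is unchanged after dividing $f$ by $x_i^{a_i - M_i}$, so the colon ideal is unaltered while the degree drops—contradicting $\deg(f) = \mathrm{v}(I)$. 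With the hypothesis verified, Proposition \ref{v3.1} produces $(I(\mathrm{pol}) : f(\mathrm{pol})) \in \mathrm{Ass}(I(\mathrm{pol}))$, and since $\deg(f(\mathrm{pol})) = \deg(f)$, the desired inequality follows.

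For the equality assertion, the task reduces to showing that when $I$ has no embedded primes, the hypothesis of Theorem \ref{vpol} is automatically satisfied. Pick any $\mathfrak{p} = \langle x_{s_1,b_{s_1}}, \ldots, x_{s_k,b_{s_k}} \rangle \in \mathrm{Ass}(I(\mathrm{pol}))$ realizing $\mathrm{v}(I(\mathrm{pol}))$. Since $I$ has no embedded primes whatsoever, there is trivially no embedded prime of $I$ properly containing $\langle x_{s_1}, \ldots, x_{s_k} \rangle$. Thus Theorem \ref{vpol} applies verbatim and yields $\mathrm{v}(I) = \mathrm{v}(I(\mathrm{pol}))$. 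The only minor step requiring any thought is the exponent-reduction observation above, and this is a routine minimality argument; the rest is just recording the consequences of earlier results.
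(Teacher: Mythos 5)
Your proposal is correct and follows essentially the same route as the paper, which leaves this corollary without a separate proof precisely because it falls out of the two halves of the argument for Theorem \ref{vpol}: the first half (via Proposition \ref{v3.1}) gives the unconditional inequality, and the no-embedded-primes hypothesis makes the condition of Theorem \ref{vpol} vacuous. Your exponent-reduction remark is a nice touch, as it justifies the claim ``the power of any $x_i$ in $f$ is at most the highest power of $x_i$ appearing in $G(I)$'' that the paper asserts without comment in the proof of Theorem \ref{vpol}.
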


The converse of the above Corollary \ref{vcor3.5} is not necessarily true i.e., despite of having an embedded prime of a monomial ideal $I$, it may happen that $\mathrm{v}(I(\mathrm{pol}))=\mathrm{v}(I)$.

\begin{example}{\rm
Let $I=\big<x_{1}x_{2}^{2}, x_{2}x_{3}^2, x_{1}^2x_{3}\big>\subset \mathbb{Q}[x_{1},x_{2},x_{3}]$. Then 
$$I=\big<x_{2}^2,x_{3}\big>\cap  \big<x_{1},x_{3}^2\big>\cap\big<x_{1}^2,x_{2}\big>\cap\big<x_{1}^2,x_{2}^2,x_{3}^2\big>.$$ 
Here $\mathrm{Ass}(I)=\{\big<x_{2},x_{3}\big>, \big<x_{1},x_{3}\big>, \big<x_{1},x_{2}\big>, \big<x_{1},x_{2},x_{3}\big>\}$. With the help of (\cite{grv}, Procedure A1), we obtain $\mathrm{v}(I)=3=\mathrm{v}(I(\mathrm{pol}))$. In fact, we have $(I:x_{1}x_{2}x_{3})=\big<x_{1}, x_{2},x_{3}\big>$, where $\big<x_{1}, x_{2},x_{3}\big>$ is an embedded prime of $I$. Also, we have $I(\mathrm{pol})=\big< x_{1,1}x_{2,1}x_{2,2}, x_{2,1}x_{3,1}x_{3,2}, x_{3,1}x_{1,1}x_{1,2}\big>$ and 
$$(I(\mathrm{pol}):x_{1,1}x_{2,1}x_{3,1})=\big< x_{1,2},x_{2,2},x_{3,2}\big>.$$
Note that $v(I(\mathrm{pol}))=3=\mathrm{deg}(x_{1,1}x_{2,1}x_{3,1})$ and this justify our Theorem \ref{vpol} as there is no associated prime ideals of $I$ properly containing $\big<x_{1}, x_{2},x_{3}\big>$.
}
\end{example}

From Theorem \ref{vpol}, we get that the relations between $\mathrm{v}$-number of an arbitrary monomial ideal with the $\mathrm{v}$-number of its polarization. 
The next result is the generalization of (\cite{v}, Proposition 3.1) for 
a monomial ideal with some special properties.
\medskip

For a graded module $M\not= 0$, we define $\alpha(M):= \mathrm{min}\{\mathrm{deg}(f)\mid f\in M\setminus \{0\}\}$

\begin{proposition}\label{v3.4}
Let $I$ be a monomial ideal. Suppose there exists 
$$\mathfrak{p}=\big< x_{s_{1},b_{s_{1}}},\ldots, x_{s_{k},b_{s_{k}}}\big> \in \mathrm{Ass}(I(\mathrm{pol}))$$ such that $\mathrm{v}(I(\mathrm{pol}))=\mathrm{v}_{\mathfrak{p}}(I(\mathrm{pol}))$ and there is no embedded prime of $I$ properly containing $\big< x_{s_{1}},\ldots, x_{s_{k}}\big>$. Then we have
$$ \mathrm{v}(I)=\mathrm{min}\{\alpha((I:\mathfrak{p})/I)\mid \mathfrak{p}\in \mathrm{Ass}(I)\}.$$
\end{proposition}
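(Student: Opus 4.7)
The plan is to establish the equality by proving two opposite inequalities, using Theorem \ref{vpol} to reduce to the polarized setting for one direction and the definitions for the other. Let $m := \min\{\alpha((I:\mathfrak{p})/I) \mid \mathfrak{p} \in \mathrm{Ass}(I)\}$ and write $\mathfrak{p}^* := \langle x_{s_{1}},\ldots,x_{s_{k}}\rangle$.

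The inequality $m \leq \mathrm{v}(I)$ is immediate from the definitions: if $f \in R_{d}$ realizes $\mathrm{v}(I) = d$ with $(I:f) = \mathfrak{q} \in \mathrm{Ass}(I)$, then $f\mathfrak{q} \subseteq I$ while $f \notin I$, so $f$ represents a nonzero class in $(I:\mathfrak{q})/I$ of degree $d$, giving $\alpha((I:\mathfrak{q})/I) \leq d = \mathrm{v}(I)$ and hence $m \leq \mathrm{v}(I)$.

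For the reverse inequality, Theorem \ref{vpol} gives $\mathrm{v}(I) = \mathrm{v}(I(\mathrm{pol}))$ under the hypothesis. Applying Proposition \ref{v3.2} to the $\mathfrak{p} \in \mathrm{Ass}(I(\mathrm{pol}))$ of the hypothesis produces a monomial $f \in R$ with $(I:f) = \mathfrak{p}^*$ and $\mathrm{deg}(f) \leq \mathrm{v}(I(\mathrm{pol})) = \mathrm{v}(I)$; in particular $\mathfrak{p}^* \in \mathrm{Ass}(I)$ and $\mathrm{v}_{\mathfrak{p}^*}(I) = \mathrm{v}(I)$. To establish $\mathrm{v}_{\mathfrak{p}^*}(I) \leq \alpha((I:\mathfrak{p}^*)/I)$, let $g$ be any monomial of minimal degree in $(I:\mathfrak{p}^*) \setminus I$. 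Then $\mathfrak{p}^* \subseteq (I:g)$, so by Lemma \ref{vlemma} there is a monomial $h$ with $g \mid h$ and $(I:h) = \langle x_{s_{1}},\ldots,x_{s_{r}}\rangle$ prime, for some $r \geq k$, and $(I:h) \in \mathrm{Ass}(I)$. If $r > k$, then $(I:h)$ is an associated prime properly containing $\mathfrak{p}^* \in \mathrm{Ass}(I)$, i.e., an embedded prime of $I$ properly containing $\langle x_{s_{1}},\ldots,x_{s_{k}}\rangle$, contradicting the hypothesis. Hence $r = k$; inspecting the iterative step in the proof of Lemma \ref{vlemma} shows that $r = k$ forces $h = g$ (no non-target variable is ever introduced), so $(I:g) = \mathfrak{p}^*$ and $\mathrm{v}_{\mathfrak{p}^*}(I) \leq \mathrm{deg}(g) = \alpha((I:\mathfrak{p}^*)/I)$. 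Combined with the trivial $\alpha((I:\mathfrak{p}^*)/I) \leq \mathrm{v}_{\mathfrak{p}^*}(I)$, this yields $\mathrm{v}(I) = \alpha((I:\mathfrak{p}^*)/I)$.

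Since $\mathfrak{p}^* \in \mathrm{Ass}(I)$, the value $\alpha((I:\mathfrak{p}^*)/I) = \mathrm{v}(I)$ is realized in the set defining $m$, so $m \leq \mathrm{v}(I)$; together with the easy inequality above and the fact that the achieved value equals $\mathrm{v}(I)$, we conclude $\mathrm{v}(I) = m$. The main obstacle is the delicate analysis of Lemma \ref{vlemma}'s construction to confirm that $r = k$ actually forces $h = g$, which is precisely where the no-embedded-prime hypothesis on $\langle x_{s_{1}},\ldots,x_{s_{k}}\rangle$ is essential for ruling out a strictly larger prime $(I:h) \supsetneq \mathfrak{p}^*$.
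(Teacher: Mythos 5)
Your easy inequality $m\leq \mathrm{v}(I)$ and your analysis of the distinguished prime $\mathfrak{p}^{*}=\langle x_{s_{1}},\ldots,x_{s_{k}}\rangle$ are essentially sound (including the observation that $r=k$ in Lemma \ref{vlemma} forces $h=g$, since the construction only modifies $f$ at the moment it adjoins a new variable to the target prime). The genuine gap is in the reverse inequality. Since $m$ is a \emph{minimum} over all of $\mathrm{Ass}(I)$, proving $m\geq \mathrm{v}(I)$ requires showing $\alpha((I:\mathfrak{q})/I)\geq \mathrm{v}(I)$ for \emph{every} $\mathfrak{q}\in\mathrm{Ass}(I)$; you only establish $\alpha((I:\mathfrak{p}^{*})/I)=\mathrm{v}(I)$ for the single prime $\mathfrak{p}^{*}$. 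Your closing step --- ``the value $\mathrm{v}(I)$ is realized in the set defining $m$, and $m\leq\mathrm{v}(I)$, hence $m=\mathrm{v}(I)$'' --- is not valid: both facts are compatible with some other $\mathfrak{q}\in\mathrm{Ass}(I)$ satisfying $\alpha((I:\mathfrak{q})/I)<\mathrm{v}(I)$, which would force $m<\mathrm{v}(I)$. Moreover, your $r=k$ argument cannot be run for a general $\mathfrak{q}$: the no-embedded-prime hypothesis concerns only primes containing $\langle x_{s_{1}},\ldots,x_{s_{k}}\rangle$, and Lemma \ref{vlemma} produces a \emph{multiple} $h$ of $g$ (so $\deg h\geq \deg g$), which gives no lower bound on $\alpha((I:\mathfrak{q})/I)$.

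The paper closes exactly this gap by a different device. For an arbitrary $\mathfrak{q}=\langle x_{t_{1}},\ldots,x_{t_{l}}\rangle\in\mathrm{Ass}(I)$ and an arbitrary monomial $h\in(I:\mathfrak{q})\setminus I$, it forms $h^{\prime}=\mathrm{lcm}\{u_{1}/x_{t_{1}},\ldots,u_{l}/x_{t_{l}}\}$ (where $u_{i}\in G(I)$ divides $hx_{t_{i}}$), notes that $h^{\prime}$ \emph{divides} $h$, and polarizes to obtain $g\in (I(\mathrm{pol}):\langle x_{t_{1},1},\ldots,x_{t_{l},1}\rangle)\setminus I(\mathrm{pol})$ with $\deg(g)\leq\deg(h)$. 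This yields
$$\alpha\big((I(\mathrm{pol}):\langle x_{t_{1},1},\ldots,x_{t_{l},1}\rangle)/I(\mathrm{pol})\big)\leq \alpha((I:\mathfrak{q})/I),$$
and then the already-known square-free case of the formula (\cite{v}, Proposition 3.1) together with Theorem \ref{vpol} gives $\mathrm{v}(I)=\mathrm{v}(I(\mathrm{pol}))\leq \alpha((I:\mathfrak{q})/I)$ uniformly in $\mathfrak{q}$. Some version of this uniform bound is what your argument is missing; without it the stated equality does not follow.
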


\begin{proof}
If $I$ is a prime ideal, then $(I:1)=I$, $(I:I)=R$ and therefore we have 
$$\mathrm{v}(I)=\alpha((I:I)/I)=\alpha(R/I)=0.$$
So we may assume $I$ is not prime. Now there exists $\mathfrak{p^{\prime}}\in \mathrm{Ass}(I)$ and $f\in R_{d}$ such that $(I:f)=\mathfrak{p^{\prime}}$ with $\mathrm{v}(I)=\mathrm{deg}(f)$.  Then $f\in (I:\mathfrak{p^{\prime}})$ but $f\not\in I$ and so $f\in (I:\mathfrak{p^{\prime}})\setminus I$. Thus
$$\mathrm{v}(I)\geq \alpha((I:\mathfrak{p^{\prime}})/I)\geq \mathrm{min}\{\alpha((I:\mathfrak{p})/I)\mid \mathfrak{p}\in \mathrm{Ass}(I)\}.$$
Let us assume $\mathfrak{p}^{\prime\prime}=\big<x_{s_{1}},\ldots,x_{s_{k}}\big>\in \mathrm{Ass}(I)$ and $h\in (I:\mathfrak{p}^{\prime\prime})\setminus I$ be a monomial. Then $hx_{s_{i}}\in I$ implies $u_{i}\mid hx_{s_{i}}$ for some $u_{i}\in G(I)$, where $1\leq i\leq k$. Now $u_{i}\nmid h$ as $h\not\in I$ and so $x_{s_{i}}\mid u_{i}$. We take $h^{\prime}=\mathrm{lcm}\{\dfrac{u_{1}}{x_{s_{1}}},\ldots,\dfrac{u_{k}}{x_{s_{k}}}\}$. Then $x_{s_{i}}h^{\prime}\in I$ for all $1\leq i\leq k$ and $\mathrm{deg}(h^{\prime})\leq \mathrm{deg}(h)$. Each $\dfrac{u_{i}}{x_{s_{i}}}\mid h$ implies $h^{\prime}$ divide $h$ and hence $h^{\prime}\not\in I$ as $h\not\in I$. Therefore we can say $h^{\prime}(\mathrm{pol})\in R(\mathrm{pol})$ and since $\dfrac{u_{i}}{\mathrm{gcd}(u_{i},h^{\prime})}=x_{s_{i}}$ for each $i\in\{1,\ldots,k\}$, we also have $(h^{\prime}x_{s_{1}}\cdots x_{s_{k}})(\mathrm{pol})\in R(\mathrm{pol})$. Now consider $g=\dfrac{(h^{\prime}x_{s_{1}}\cdots x_{s_{k}})(\mathrm{pol})}{x_{s_{1},1}\cdots x_{s_{k},1}}$. Then $g\not\in I(\mathrm{pol})$ as $h^{\prime}\not\in I$. Again $x_{s_{i}}h^{\prime}\in\big< u_{i}\big>$ implies $gx_{s_{i},1}\in\big<u_{i}(\mathrm{pol})\big>$ for all $i\in\{1,\ldots,k\}$. Therefore $ g\in (I(\mathrm{pol}):\big<x_{s_{1},1},\ldots,x_{s_{k},1}\big>)\setminus I(\mathrm{pol})$ and from (\cite{far}, Proposition 2.5), we also have 
$$\big<x_{s_{1},1},\ldots,x_{s_{k},1}\big>\in \mathrm{Ass}(I(\mathrm{pol})).$$ Note that $\mathrm{deg}(g)=\mathrm{deg}(h^{\prime})\leq \mathrm{deg}(h)$. Since $h\in (I:\mathfrak{p}^{\prime\prime})\setminus I$ is arbitrary, we have
$$\alpha((I(\mathrm{pol}):\big<x_{s_{1},1},\ldots,x_{s_{k},1}\big>)/ I(\mathrm{pol}))\leq \alpha((I:\mathfrak{p}^{\prime\prime})/I).$$
 Therefore using (\cite{v}, Proposition 3.1) we get $\mathrm{v}(I(\mathrm{pol}))\leq \mathrm{min}\{\alpha((I:\mathfrak{p})/I)\mid \mathfrak{p}\in \mathrm{Ass}(I)\}$ and also by Theorem \ref{vpol} $\mathrm{v}(I)=\mathrm{v}(I(\mathrm{pol}))$. Hence $\mathrm{v}(I)\leq \mathrm{min}\{\alpha((I:\mathfrak{p})/I)\mid \mathfrak{p}\in \mathrm{Ass}(I)\}$ and the result follows.
\end{proof}

\begin{lemma}\label{vassp}
Let $I_{1}\subset R_{1}=K[\mathbf{x}]$ and $I_{2}\subset R_{2}=K[\mathbf{y}]$ be two monomial ideals and $K$ be a field. Consider $R=K[\mathbf{x},\mathbf{y}]$ and $I=I_{1}R+I_{2}R$. Then $\mathfrak{p}\in\mathrm{Ass}(R/I)$ if and only if $\mathfrak{p}=\mathfrak{p}_{1}R+\mathfrak{p}_{2}R$, where $\mathfrak{p}_{1}\in\mathrm{Ass}(R_{1}/I_{1})$ and $\mathfrak{p}_{2}\in\mathrm{Ass}(R_{2}/I_{2})$
\end{lemma}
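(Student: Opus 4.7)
The plan is to prove the colon identity
$$(I : f) \;=\; (I_1 : f_1)R + (I_2 : f_2)R$$
for any monomial $f = f_1 f_2$ with $f_1 \in R_1$ and $f_2 \in R_2$, and then combine it with the characterization recalled in the preliminaries that the associated primes of a monomial ideal are exactly the prime ideals of the form $(J:h)$ for some monomial $h$.

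To establish the colon identity, I would use the formula $(I : f) = \big\langle u/\mathrm{gcd}(u, f) \mid u \in G(I) \big\rangle$. The minimal generators of $I$ are the minimal generators of $I_1$ together with those of $I_2$, viewed as monomials in $R$. For $u \in G(I_1)$ the monomial $u$ involves only $\mathbf{x}$-variables, so $\mathrm{gcd}(u, f) = \mathrm{gcd}(u, f_1)$; symmetrically for $u \in G(I_2)$. Summing these contributions yields the identity.

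For the forward direction, suppose $\mathfrak{p} \in \mathrm{Ass}(R/I)$; then $\mathfrak{p} = (I : f)$ for some monomial $f$, and $f \notin I$ forces $f_1 \notin I_1$ and $f_2 \notin I_2$, so both $(I_1 : f_1)$ and $(I_2 : f_2)$ are proper. The colon identity gives $\mathfrak{p} = (I_1 : f_1)R + (I_2 : f_2)R$. Since $\mathfrak{p}$ is an associated prime of the monomial ideal $I$ it is generated by a subset of the variables of $R$, and the minimal generators of the right-hand side split cleanly into the minimal generators of $(I_1 : f_1)$ (in $\mathbf{x}$) together with those of $(I_2 : f_2)$ (in $\mathbf{y}$). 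Each piece must therefore already be generated by variables, so both $(I_1 : f_1)$ and $(I_2 : f_2)$ are prime, and hence lie in $\mathrm{Ass}(R_1/I_1)$ and $\mathrm{Ass}(R_2/I_2)$ respectively.

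Conversely, given $\mathfrak{p}_i = (I_i : f_i) \in \mathrm{Ass}(R_i/I_i)$ for $i=1,2$, I set $f = f_1 f_2$ and invoke the colon identity to obtain $(I : f) = \mathfrak{p}_1 R + \mathfrak{p}_2 R$. Each $\mathfrak{p}_i$ is generated by variables, and these variables come from the disjoint sets $\mathbf{x}$ and $\mathbf{y}$, so $\mathfrak{p}_1 R + \mathfrak{p}_2 R$ is generated by variables in $R$, hence is prime, and therefore lies in $\mathrm{Ass}(R/I)$. The only mildly delicate step is the colon identity itself, but the disjointness of $\mathbf{x}$ and $\mathbf{y}$ trivializes both the gcd computation and the ``generated by variables'' argument, so I anticipate no serious obstacle.
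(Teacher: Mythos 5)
Your proposal is correct and follows essentially the same route as the paper: both arguments write $f=f_1f_2$, use $G(I)=G(I_1)\sqcup G(I_2)$ together with the formula $(I:f)=\big\langle u/\mathrm{gcd}(u,f)\mid u\in G(I)\big\rangle$, and exploit that $\mathrm{gcd}(u,f)=\mathrm{gcd}(u,f_i)$ for $u\in G(I_i)$ to split the colon ideal across the two variable sets. Isolating the splitting as a standalone colon identity is a slightly cleaner packaging of the same computation the paper performs inline in each direction.
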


\begin{proof}
Since $I$ is the smallest ideal containing $I_{1}R$ and $I_{2}R$, we have $$G(I)=G(I_{1})\sqcup G(I_{2}).$$ Let $\mathfrak{p}=\big< x_{s_{1}},\ldots,x_{s_{k}},y_{t_{1}},\ldots,y_{t_{l}}\big>\in\mathrm{Ass}(R/I)$. Then there exists a monomial $f\in R$ such that $(I:f)=\mathfrak{p}$. We can write $f=f_{1}f_{2}$, where $f_{1}\in R_{1}$ and $f_{2}\in R_{2}$. Now 
$$\mathfrak{p}=(I:f)=\big<\dfrac{u}{\mathrm{gcd}(u,f)}, \dfrac{v}{\mathrm{gcd}(v,f)} \mid u\in G(I_{1}),\,\, v\in G(I_{2})\big>.$$
Also, we have
 $$\dfrac{u}{\mathrm{gcd}(u,f)}=\dfrac{u}{\mathrm{gcd}(u,f_{1})}\in \mathfrak{p}\cap R_{1}\,\, \text{and} \,\,\dfrac{v}{\mathrm{gcd}(v,f)}=\dfrac{v}{\mathrm{gcd}(v,f_{2})}\in \mathfrak{p}\cap R_{2}$$
 for all $u\in G(I_{1})$ and $v\in G(I_{2})$. Therefore we get $$(I_{1}:f_{1})=\big<x_{s_{1}},\ldots,x_{s_{k}}\big>=\mathfrak{p}_{1}\in \mathrm{Ass}(R_{1}/I_{1})$$ and 
 $$(I_{2}:f_{2})=\big<y_{t_{1}},\ldots,y_{t_{l}}\big>=\mathfrak{p}_{2}\in \mathrm{Ass}(R_{2}/I_{2}).$$ 
 Hence $\mathfrak{p}=\mathfrak{p}_{1}R+\mathfrak{p}_{2}R$, where $\mathfrak{p}_{1}\in\mathrm{Ass}(R_{1}/I_{1})$ and $\mathfrak{p}_{2}\in\mathrm{Ass}(R_{2}/I_{2})$.

Again let $\mathfrak{p}=\mathfrak{p}_{1}R+\mathfrak{p}_{2}R$, where $\mathfrak{p}_{1}\in\mathrm{Ass}(R_{1}/I_{1})$ and $\mathfrak{p}_{2}\in\mathrm{Ass}(R_{2}/I_{2})$. Then clearly $\mathfrak{p}$ is a prime ideal in $R$ containing $I$. We have monomials $f_{1}\in R_{1}$ and $f_{2}\in R_{2}$ such that $(I_{1}:f_{1})=\mathfrak{p}_{1}$ and $(I_{2}:f_{2})=\mathfrak{p}_{2}$. Setting $f=f_{1}f_{2}$, we get for all $u\in G(I_{1})$ and $v\in G(I_{2})$, 
$$\dfrac{u}{\mathrm{gcd}(u,f)}=\dfrac{u}{\mathrm{gcd}(u,f_{1})}\in \mathfrak{p}_{1}\,\, \text{and} \,\,\dfrac{v}{\mathrm{gcd}(v,f)}=\dfrac{v}{\mathrm{gcd}(v,f_{2})}\in \mathfrak{p}_{2}.$$
As $(I:f)=\big<\dfrac{w}{\mathrm{gcd}(w,f)}\mid w\in G(I)\big>$ and $G(I)=G(I_{1})\sqcup G(I_{2})$, we have $(I:f)=\mathfrak{p}$ i.e., $\mathfrak{p}\in \mathrm{Ass}(R/I)$.

\end{proof}

In (\cite{v}, Proposition 3.8), the additivity of the 
$\mathrm{v}$-number for square-free monomial ideals was shown. 
In the next proposition, we show that the $\mathrm{v}$-number 
is additive for arbitrary monomial ideals.

\begin{proposition}[v-number is additive]\label{v3.5}
Let $I_{1}\subset R_{1}=K[\mathbf{x}]$ and $I_{2}\subset R_{2}=K[\mathbf{y}]$ be two monomial ideals and consider $R=K[\mathbf{x,y}]$. Then we have
$$ \mathrm{v}(I_{1}R + I_{2}R) = \mathrm{v}(I_{1}) + \mathrm{v}(I_{2}).$$
\end{proposition}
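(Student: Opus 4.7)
The plan is to use Lemma \ref{vassp} as the workhorse and prove both inequalities $\mathrm{v}(I_1R+I_2R)\leq \mathrm{v}(I_1)+\mathrm{v}(I_2)$ and $\mathrm{v}(I_1R+I_2R)\geq \mathrm{v}(I_1)+\mathrm{v}(I_2)$ separately. Set $I=I_1R+I_2R$. The key observation that makes both directions almost mechanical is that in the proof of Lemma \ref{vassp} we actually established a sharper correspondence: for a monomial $f=f_1f_2$ with $f_1\in R_1$ and $f_2\in R_2$, one has $(I:f)=\mathfrak{p}_1 R+\mathfrak{p}_2 R$ if and only if $(I_1:f_1)=\mathfrak{p}_1$ and $(I_2:f_2)=\mathfrak{p}_2$, and this is visible from $G(I)=G(I_1)\sqcup G(I_2)$ together with the colon formula.

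For the upper bound, I would pick monomials $f_1\in R_1$ and $f_2\in R_2$ realizing $\mathrm{v}(I_1)$ and $\mathrm{v}(I_2)$, that is, $(I_1:f_1)=\mathfrak{p}_1\in\mathrm{Ass}(R_1/I_1)$ with $\deg f_1=\mathrm{v}(I_1)$, and similarly for $f_2$. By the second direction of Lemma \ref{vassp}, $f=f_1f_2$ satisfies $(I:f)=\mathfrak{p}_1R+\mathfrak{p}_2R\in\mathrm{Ass}(R/I)$, whence
\[
\mathrm{v}(I)\leq \deg f=\deg f_1+\deg f_2=\mathrm{v}(I_1)+\mathrm{v}(I_2).
\]

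For the lower bound, I would take a monomial $f\in R$ such that $(I:f)=\mathfrak{p}\in\mathrm{Ass}(R/I)$ with $\deg f=\mathrm{v}(I)$. Writing $f=f_1f_2$ with $f_1\in R_1$, $f_2\in R_2$, Lemma \ref{vassp} gives $\mathfrak{p}=\mathfrak{p}_1R+\mathfrak{p}_2R$ with $\mathfrak{p}_i\in\mathrm{Ass}(R_i/I_i)$, and the computation inside its proof yields $(I_1:f_1)=\mathfrak{p}_1$ and $(I_2:f_2)=\mathfrak{p}_2$. Therefore $\deg f_i\geq \mathrm{v}_{\mathfrak{p}_i}(I_i)\geq \mathrm{v}(I_i)$, so
\[
\mathrm{v}(I)=\deg f_1+\deg f_2\geq \mathrm{v}(I_1)+\mathrm{v}(I_2),
\]
which closes the chain.

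There is no substantial obstacle once Lemma \ref{vassp} is in hand; the whole content is in that lemma's bijection. The only bookkeeping point to be careful about is that the monomial $f$ witnessing $\mathrm{v}(I)$ really decomposes cleanly as $f_1f_2$ (which is immediate because $R=K[\mathbf{x},\mathbf{y}]$ and $f$ is a monomial, so its $\mathbf{x}$-part and $\mathbf{y}$-part are well defined) and that $\deg f=\deg f_1+\deg f_2$ under the standard grading. Edge cases, such as when $I_i$ is prime and $\mathrm{v}(I_i)=0$ is realized by $f_i=1$, are covered uniformly by the same argument since a constant monomial contributes $0$ to both sides.
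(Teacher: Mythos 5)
Your proposal is correct and follows essentially the same route as the paper: both directions are obtained from Lemma \ref{vassp}, using the decomposition $f=f_1f_2$ with $f_1\in R_1$, $f_2\in R_2$ and the fact (established inside the proof of that lemma) that $(I:f)=\mathfrak{p}_1R+\mathfrak{p}_2R$ corresponds exactly to $(I_1:f_1)=\mathfrak{p}_1$ and $(I_2:f_2)=\mathfrak{p}_2$. No substantive difference from the paper's argument.
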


\begin{proof}
Let $I=I_{1}R+I_{2}R$. Then there exists a monomial $f\in R$ and $\mathfrak{p}\in \mathrm{Ass}(R/I)$ such that 
$$ (I:f)=\mathfrak{p}\,\, \text{and}\,\, \mathrm{v}(I)=\mathrm{deg}(f).$$
We can write $f=f_{1}f_{2}$ such that $f_{1}\in R_{1}$ and $f_{2}\in R_{2}$. Then by Lemma \ref{vassp}, we have $\mathfrak{p}=\mathfrak{p}_{1}R+\mathfrak{p}_{2}R$, where 
$$(I_{1}:f_{1})=\mathfrak{p}_{1}\in \mathrm{Ass}(R_{1}/I_{1})\,\, \text{and}\,\, (I_{2}:f_{2})=\mathfrak{p}_{2}\in\mathrm{Ass}(R_{2}/I_{2}).$$
By definition of v-number, $\mathrm{v}(I_{1})+\mathrm{v}(I_{2})\leq \mathrm{deg}(f_{1})+\mathrm{deg}(f_{2})=\mathrm{v}(I)$. For the reverse inequality, we choose monomials $f_{i}\in R_{i}$ and $\mathfrak{p}_{i}\in \mathrm{Ass}(R_{i}/I_{i})$ such that
$(I_{i}:f_{i})=\mathfrak{p}_{i}$ and $\mathrm{v}(I_{i})=\mathrm{deg}(f_{i})$, where $i\in\{1,2\}$. Again by Lemma \ref{vassp}, we have $\mathfrak{p}=\mathfrak{p}_{1}R+\mathfrak{p}_{2}R\in \mathrm{Ass}(R/I)$ and $(I:f_{1}f_{2})=\mathfrak{p}$. Thus, $\mathrm{v}(I)\leq \mathrm{deg}(f_{1}f_{2})=\mathrm{v}(I_{1})+\mathrm{v}(I_{2})$.
\end{proof}

The next result is the generalization of (\cite{v}, Proposition 3.9).

\begin{proposition}\label{v3.6}
Let $I$ be a complete intersection monomial ideal with $G(I)=\{ \mathbf{x}^{\mathbf{a}_{1}},\ldots, \mathbf{x}^{\mathbf{a}_{k}}\}$. If $d_{i}=\mathrm{deg}(\mathbf{x}^{\mathbf{a}_{i}})$ for all $i=1,\ldots,k$, then we have
$$ \mathrm{v}(I)=d_{1}+\cdots + d_{k}-k=\mathrm{reg}(R/I).$$
\end{proposition}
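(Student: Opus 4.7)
The plan is to reduce the computation to the principal ideal case via the additivity proved in Proposition \ref{v3.5}, and then separately verify that the regularity of a complete intersection has the stated value.

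First I would use the well-known characterization that a monomial ideal is a complete intersection if and only if its minimal generators have pairwise disjoint supports. Thus, after relabeling variables if necessary, we may write $R = K[\mathbf{x}_1, \ldots, \mathbf{x}_k]$ where $\mathbf{x}^{\mathbf{a}_i} \in K[\mathbf{x}_i]$ involves only the block of variables $\mathbf{x}_i$, and these blocks are pairwise disjoint. Let $R_i = K[\mathbf{x}_i]$ and $J_i = \langle \mathbf{x}^{\mathbf{a}_i} \rangle \subset R_i$. Then $I = J_1 R + \cdots + J_k R$, so iterated application of Proposition \ref{v3.5} gives
\[
\mathrm{v}(I) = \mathrm{v}(J_1) + \cdots + \mathrm{v}(J_k).
\]

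Next I would show $\mathrm{v}(J_i) = d_i - 1$. Write $\mathbf{x}^{\mathbf{a}_i} = y_1^{c_1} \cdots y_m^{c_m}$ with $c_j \geq 1$ in the variables $\mathbf{x}_i = \{y_1, \ldots, y_m\}$. The associated primes of $J_i$ are precisely $\langle y_j \rangle$ for $j = 1, \ldots, m$. For $f = \mathbf{x}^{\mathbf{a}_i}/y_j$ one has $(J_i : f) = \langle y_j \rangle$, so $\mathrm{v}(J_i) \leq d_i - 1$. Conversely, if $(J_i : f) = \langle y_j \rangle$ for some monomial $f$, then $f y_j \in J_i$ forces $\mathbf{x}^{\mathbf{a}_i} / y_j \mid f$, so $\deg f \geq d_i - 1$. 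Hence $\mathrm{v}(J_i) = d_i - 1$, and summing gives $\mathrm{v}(I) = \sum_{i=1}^k (d_i - 1)$, which is the claimed formula.

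For the regularity, I would invoke the standard fact that a monomial complete intersection is resolved by the Koszul complex on the generators $\mathbf{x}^{\mathbf{a}_1}, \ldots, \mathbf{x}^{\mathbf{a}_k}$, which is the tensor product of the Koszul complexes on each principal ideal $J_i$. The top nonzero Betti number lies in homological degree $k$ and internal degree $d_1 + \cdots + d_k$, so
\[
\mathrm{reg}(R/I) = (d_1 + \cdots + d_k) - k = \sum_{i=1}^k (d_i - 1),
\]
matching $\mathrm{v}(I)$.

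The main obstacle is really the bookkeeping in the first step: making sure the additivity of Proposition \ref{v3.5} applies cleanly after splitting the ambient polynomial ring into the disjoint variable blocks dictated by the complete intersection structure. Once that reduction is in place, both the v-number and the regularity computations reduce to the transparent principal and Koszul cases.
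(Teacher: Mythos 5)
Your proof is correct, but it takes a genuinely different route from the paper's. The paper reduces to the square-free case via polarization: since a complete intersection has no embedded primes, Corollary \ref{vcor3.5} (via Theorem \ref{vpol}) gives $\mathrm{v}(I)=\mathrm{v}(I(\mathrm{pol}))$; one then checks that $I(\mathrm{pol})$ is again a complete intersection with the same generating degrees (using $\mathrm{ht}(I)=\mathrm{ht}(I(\mathrm{pol}))$) and that regularity is preserved under polarization, and the square-free case from (\cite{v}, Proposition 3.9) finishes the argument. You instead use the characterization of monomial complete intersections as ideals whose minimal generators have pairwise disjoint supports, split the ambient ring into the corresponding variable blocks, apply the additivity of Proposition \ref{v3.5} iteratively, compute $\mathrm{v}(\langle u\rangle)=\deg(u)-1$ for a principal monomial ideal directly, and handle the regularity with the Koszul complex. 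Your version is self-contained and bypasses both the polarization machinery and the citation of the square-free result, at the cost of having to justify the disjoint-support characterization; the paper's version is shorter given that its polarization results are already established. The only bookkeeping worth spelling out in your write-up is the treatment of variables appearing in no generator (absorb them into one of the blocks, or note that $\mathrm{v}$ and $\mathrm{reg}$ are unchanged under extension of the ring by new variables) and the routine induction needed to iterate Proposition \ref{v3.5} over $k$ blocks; neither is a real obstacle.
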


\proof 
$I$ is complete intersection implies that $\vert G(I)\vert =\mathrm{ht}(I)$. By (\cite{far}, Proposition 2.3), $\mathrm{ht}(I)=\mathrm{ht}(I(\mathrm{pol}))$ and therefore we have
$$\vert G(I(\mathrm{pol}))\vert =\vert G(I)\vert =\mathrm{ht}(I)=\mathrm{ht}(I(\mathrm{pol}))$$
i.e., $I(\mathrm{pol})$ is complete intersection. According to (\cite{hhmon}, Corollary 1.6.3), $\mathrm{reg}(R/I)= \mathrm{reg}(R(\mathrm{pol})/I(\mathrm{pol}))$. Since $I$ is complete intersection, $I$ has no embedded prime. Therefore, by Theorem \ref{vpol}, we have $\mathrm{v}(I)=\mathrm{v}(I(\mathrm{pol}))$. Again $\mathrm{deg}(\mathbf{x}^{\mathbf{a}_{i}}(\mathrm{pol}))=\mathrm{deg}(\mathbf{x}^{\mathbf{a}_{i}})=d_{i}$ for $i=1,\ldots,k$ and hence by (\cite{v}, Proposition 3.9), we have
$$\mathrm{v}(I)=\mathrm{v}(I(\mathrm{pol}))=d_{1}+\cdots + d_{k}-k=\mathrm{reg}(R/I). \qed$$

\begin{proposition}\label{v3.7}
Let $I$ be a monomial ideal and $f$ be a monomial such that $f\not\in I$. Then 
$\mathrm{v}(I)\leq \mathrm{v}(I:f)+\mathrm{deg}(f).$
\end{proposition}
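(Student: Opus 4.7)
The plan is to take a monomial and associated prime that realize $\mathrm{v}(I:f)$, then multiply by $f$ to get a witness for $\mathrm{v}(I)$ of controlled degree.

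First I would set $J := (I:f)$, which is again a proper monomial ideal because $f \notin I$. By the definition of the v-number applied to $J$, I can choose a monomial $g \in R$ and an associated prime $\mathfrak{p} \in \mathrm{Ass}(J)$ such that
\[
(J : g) \;=\; \mathfrak{p} \qquad \text{and} \qquad \deg(g) \;=\; \mathrm{v}(J) \;=\; \mathrm{v}(I:f).
\]
Next I would apply the standard colon identity $\bigl((I:f):g\bigr) = (I : fg)$ to obtain
\[
(I : fg) \;=\; \bigl((I:f):g\bigr) \;=\; (J : g) \;=\; \mathfrak{p}.
\]
This is the key computation of the argument; it is essentially formal but it is where both the hypothesis on $f$ and the choice of $g$ combine.

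Then I would observe that the right-hand side $\mathfrak{p}$ is a prime ideal of $R$, and in particular $\mathfrak{p} \neq R$, so $fg \notin I$; hence $fg$ is a legitimate monomial in $R \setminus I$. Since $\mathfrak{p}$ is realised as $(I : h)$ for the specific monomial $h = fg$, the characterisation of associated primes of monomial ideals recalled in Section~2 (namely that $\mathrm{Ass}(I)$ consists exactly of those prime ideals of $R$ that arise as a colon $(I:h)$ for some monomial $h$) gives $\mathfrak{p} \in \mathrm{Ass}(I)$.

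Finally I would conclude directly from Definition~\ref{v1.1}: since there exists a monomial, namely $fg$, and an associated prime $\mathfrak{p} \in \mathrm{Ass}(I)$ with $(I : fg) = \mathfrak{p}$, the v-number of $I$ is at most the degree of this witness, so
\[
\mathrm{v}(I) \;\leq\; \deg(fg) \;=\; \deg(f) + \deg(g) \;=\; \deg(f) + \mathrm{v}(I:f),
\]
which is the desired inequality. There is no real obstacle here beyond checking the colon identity and that the resulting prime is associated to $I$; the only potential pitfall is forgetting to verify $fg \notin I$, which is however immediate from the fact that $(I:fg)$ is a proper (prime) ideal.
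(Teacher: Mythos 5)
Your proposal is correct and follows essentially the same route as the paper: pick a monomial $g$ realising $\mathrm{v}(I:f)$, use $((I:f):g)=(I:fg)$, and conclude from the characterisation of associated primes of monomial ideals as colons by monomials. The only (cosmetic) difference is that the paper first treats the case where $(I:f)$ is itself an associated prime of $I$ separately, whereas your argument handles that case uniformly by allowing $g=1$.
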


\proof 
Suppose $(I:f)$ is an associated prime of $I$. Then by definition of $\mathrm{v}$-number, $\mathrm{v}(I)\leq \mathrm{deg}(f)$  and so the result follows as Proposition \ref{v3.6} implies $\mathrm{v}(I:f)=0$. Now assume $(I:f)\not\in \mathrm{Ass}(I)$. Then there exists an associated prime $\mathfrak{p}$ of $(I:f)$ and a monomial $g$ such that $((I:f):g)=\mathfrak{p}$ and $\mathrm{v}(I:f)=\mathrm{deg}(g)$. Note that $(I:fg)=\mathfrak{p}$ and hence we get 
$$\mathrm{v}(I)\leq \mathrm{deg}(fg)=\mathrm{v}(I:f)+\mathrm{deg}(f). \qed$$

\begin{corollary}\label{v3.8}
Let $I$ be a monomial ideal and $x_{i}$ be a variable such that 
$x_{i}\not\in I$. Then 
$\mathrm{v}(I)\leq \mathrm{v}(I:x_{i})+1.$
\end{corollary}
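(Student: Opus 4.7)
The plan is to derive this as an immediate specialization of Proposition \ref{v3.7}. That proposition asserts that for any monomial $f \notin I$, one has $\mathrm{v}(I) \leq \mathrm{v}(I:f) + \mathrm{deg}(f)$. The corollary we want follows by choosing $f = x_i$, which is a monomial of degree $1$, and for which the hypothesis $f \notin I$ is precisely the assumption we are given.

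Concretely, I would simply observe that the variable $x_i$ is a monomial in $R$, that $x_i \notin I$ by hypothesis, and therefore Proposition \ref{v3.7} applies with $f = x_i$. Substituting $\mathrm{deg}(x_i) = 1$ into the inequality $\mathrm{v}(I) \leq \mathrm{v}(I:f) + \mathrm{deg}(f)$ yields exactly $\mathrm{v}(I) \leq \mathrm{v}(I:x_i) + 1$.

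There is no genuine obstacle here, since the statement is a one-line consequence of the preceding proposition. The only point worth flagging is that the assumption $x_i \notin I$ is essential: it ensures that $(I:x_i)$ is a proper ideal and that the quantity $\mathrm{v}(I:x_i)$ on the right-hand side is well-defined (otherwise $(I:x_i) = R$ and the inequality would be vacuous or meaningless). With this hypothesis in place, the proof reduces to a direct invocation of Proposition \ref{v3.7}.
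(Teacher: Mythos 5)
Your proof is correct and is exactly the paper's argument: the corollary is obtained by applying Proposition \ref{v3.7} with $f = x_i$ and noting $\mathrm{deg}(x_i)=1$. Your additional remark about why $x_i \notin I$ is needed is a sensible clarification but does not change the argument.
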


\begin{proof}
The result follows by taking $f=x_{i}$ in Proposition \ref{v3.7}.
\end{proof}

Some properties of v-number of edge ideals of graphs were discussed 
in (\cite{v}, proposition 3.12). We extend some of those for edge ideals of clutters, i.e., for any square-free monomial ideal in Proposition \ref{v3.9}.

\begin{proposition}\label{v3.9}
Let $I=I(\mathcal{C})$ be an edge ideal of a clutter $\mathcal{C}$.  Then the following results are true.

\begin{enumerate}[(i)]
\item If $\{x_{i}\}\not\in E(\mathcal{C})$, 
then $\mathrm{v}(I)\leq \mathrm{v}(I:x_{i})+1$, where 
$x_{i}\in V(\mathcal{C})$.
\medskip

\item $\mathrm{v}(I:x_{i})\leq \mathrm{v}(I)$, 
for some $x_{i}\in V(\mathcal{C})$.
\medskip

\item If $\mathrm{v}(I)\geq 2$, then $\mathrm{v}(I:x_{i})< \mathrm{v}(I)$ for some $x_{i}\in V(\mathcal{C})$.
\medskip

\item $\mathrm{v}(I(\mathcal{C}\setminus \{x_{i}\}))\leq \mathrm{v}(I(\mathcal{C}))$ for some $x_{i}\in V(\mathcal{C})$.
\medskip

\end{enumerate}
\end{proposition}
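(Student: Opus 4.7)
The plan is to route all four parts through Lemma \ref{v2.2} and Theorem \ref{v2.3}, exploiting the basic factorization $(I:X_A) = ((I:x_i):X_{A\setminus\{x_i\}})$ when $x_i \in A$. Part (i) will be dispatched in one line: the hypothesis $\{x_i\}\notin E(\mathcal{C})$ says precisely that $x_i$ is not a generator of $I$, hence $x_i\notin I$, so Corollary \ref{v3.8} applies verbatim.

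For (ii) and (iii), which I will treat together, I first dispose of the trivial case where $I$ is prime (then $\mathrm{v}(I)=0$, and any $x_i\notin I$ gives $(I:x_i)=I$). Otherwise, Theorem \ref{v2.3} furnishes $A\in\mathcal{A}_{\mathcal{C}}$ with $|A|=\mathrm{v}(I)$, which is non-empty whenever $\mathrm{v}(I)\ge 1$. Picking $x_i\in A$ (so that $x_i\notin I$ by the stability of $A$), the equality $(I:X_A)=\langle\mathcal{N}_{\mathcal{C}}(A)\rangle$ from Lemma \ref{v2.2}(a) rewrites as $((I:x_i):X_{A\setminus\{x_i\}})=\langle\mathcal{N}_{\mathcal{C}}(A)\rangle$, a prime ideal. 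Hence $\langle\mathcal{N}_{\mathcal{C}}(A)\rangle\in\mathrm{Ass}(I:x_i)$ and $\mathrm{v}(I:x_i)\le|A|-1=\mathrm{v}(I)-1$. This establishes (iii) when $\mathrm{v}(I)\ge 2$ and, combined with the prime case, (ii) in full.

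For (iv) I will again use a minimizing $A\in\mathcal{A}_{\mathcal{C}}$, but now select $x_i\in\mathcal{N}_{\mathcal{C}}(A)$ from the associated minimal vertex cover. Writing $\mathcal{C}'=\mathcal{C}\setminus\{x_i\}$, the set $A$ remains stable in $\mathcal{C}'$ because $A\cap\mathcal{N}_{\mathcal{C}}(A)=\emptyset$ forces $x_i\notin A$. The key identification is $\mathcal{N}_{\mathcal{C}'}(A)=\mathcal{N}_{\mathcal{C}}(A)\setminus\{x_i\}$: the inclusion $\subseteq$ is immediate, and for the reverse, any witnessing edge $e\subseteq A\cup\{x\}$ with $x\in\mathcal{N}_{\mathcal{C}}(A)\setminus\{x_i\}$ automatically satisfies $x_i\notin e$ because $x_i\notin A$ and $x_i\ne x$. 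Since $\mathcal{N}_{\mathcal{C}}(A)\setminus\{x_i\}$ covers every edge of $\mathcal{C}'$, Lemma \ref{v2.2}(b) promotes it for free to a minimal vertex cover of $\mathcal{C}'$; thus $A\in\mathcal{A}_{\mathcal{C}'}$ and Theorem \ref{v2.3} yields $\mathrm{v}(I(\mathcal{C}'))\le|A|=\mathrm{v}(I(\mathcal{C}))$. The main obstacle I anticipate lies precisely in this minimality question: without the black-box use of Lemma \ref{v2.2}(b), one would be forced to construct witnessing edges in $\mathcal{C}'$ for each vertex of $\mathcal{N}_{\mathcal{C}}(A)\setminus\{x_i\}$, which need not exist because the original witnesses in $\mathcal{C}$ could all have involved $x_i$; the degenerate sub-case $\mathcal{N}_{\mathcal{C}}(A)=\emptyset$, forcing $I=0$, is handled by a trivial remark.
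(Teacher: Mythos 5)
Your proposal is correct and follows essentially the same route as the paper: part (i) via Corollary \ref{v3.8}, and parts (ii)--(iv) by producing the minimizing stable set $A$ from Lemma \ref{v2.2} and Theorem \ref{v2.3} and factoring the colon ideal through $x_i$. The only (harmless) divergence is in (ii): the paper keeps $A$ intact and picks $x_i\notin\langle\mathcal{N}_{\mathcal{C}}(A)\rangle$ to get $((I:x_i):X_A)=\langle\mathcal{N}_{\mathcal{C}}(A)\rangle$ directly, whereas you pick $x_i\in A$ and deduce (ii) from the strict bound $\mathrm{v}(I:x_i)\leq\mathrm{v}(I)-1$ together with the prime case, which is equally valid (modulo the same tacit exclusion of $I=\mathfrak{m}$ that the paper also makes).
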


\begin{proof}
\noindent (i): Follows from Corollary \ref{v3.8}.
\medskip

\noindent (ii): By Lemma \ref{v2.2} and Theorem \ref{v2.3}, we have a stable set $A$ of $\mathcal{C}$ such that
$$ (I:X_{A})=\big<\mathcal{N}_{\mathcal{C}}(A)\big>=\mathfrak{p}\in \mathrm{Ass}(I)\,\,\text{and}\,\, \mathrm{v}(I)=\vert A\vert.$$
We are assuming $I\not=\mathfrak{m}$ otherwise $(\mathfrak{m}:x_{i})=R$ for any $x_{i}\in V(\mathcal{C})$. Then there exists some $x_{i}\in V(\mathcal{C})$ which is not in $\mathfrak{p}$. Note that $\mathfrak{p}\subseteq (I:x_{i}X_{A})$. Let us take $f\in (I:x_{i}X_{A})$. Then $fx_{i}\in\mathfrak{p}$ and $x_{i}\not\in\mathfrak{p}$ together imply $f\in\mathfrak{p}$. Thus $(I:x_{i}X_{A})=\mathfrak{p}$ i.e., $((I:x_{i}):X_{A})=\mathfrak{p}$. Therefore we have 
$$\mathrm{v}(I:x_{i})\leq \vert A\vert=\mathrm{v}(I).$$

\noindent (iii): Take a stable set $A$ of $\mathcal{C}$ with
$$ (I:X_{A})=\big<\mathcal{N}_{\mathcal{C}}(A)\big>=\mathfrak{p}\in \mathrm{Ass}(I)\,\,\text{and}\,\, \mathrm{v}(I)=\vert A\vert.$$
Since $\vert A \vert\geq 2$, we have $A^{\prime}=A\setminus \{x_{i}\}\not=\phi$ for any $x_{i}\in A$. Then 
$$(I:X_{A})=(I:x_{i}X_{A^{\prime}})=((I:x_{i}):X_{A^{\prime}})=\mathfrak{p},$$ which gives $\mathrm{v}(I:x_{i})\leq \vert A^{\prime}\vert <\vert A\vert=\mathrm{v}(I)$.
\medskip

\noindent (iv): Note that if $I=I(\mathcal{C})$ then $\mathrm{v}(I,x_{i})=\mathrm{v}(I(\mathcal{C}\setminus \{x_{i}\}))$. Take $A$ and $\mathfrak{p}$ as in part (ii). Pick $x_{i}\in V(\mathcal{C})\setminus A$ and so $A$ is a stable set of the clutter $\mathcal{C}\setminus \{x_{i}\}$ also. Let $e\in E(\mathcal{C}\setminus \{x_{i}\})\subset E(\mathcal{C})$. Then there exists $y\in \mathcal{N}_{\mathcal{C}}(A)$ such that $y\in e$. Also by definition of $\mathcal{N}_{\mathcal{C}}(A)$ there exists $e^{\prime}\in E(C)$ such that $e^{\prime}\subseteq A\cup \{y\}$. Now $x_{i}\not\in e$ implies $y\not = x_{i}$ and therefore $x_{i}\not\in e^{\prime}$. Then we have $e^{\prime}\in E(\mathcal{C}\setminus \{x_{i}\})$ which imply $y\in \mathcal{N}_{\mathcal{C}\setminus \{x_{i}\}}(A)$. Thus $\mathcal{N}_{\mathcal{C}\setminus \{x_{i}\}}(A)$ is a vertex cover of $\mathcal{C}\setminus \{x_{i}\}$ and $A$ being a stable set of $\mathcal{C}\setminus \{x_{i}\}$, using Lemma \ref{v2.2}, we have
$$(I(\mathcal{C}\setminus \{x_{i}\}):X_{A})=\big< \mathcal{N}_{\mathcal{C}\setminus \{x_{i}\}}(A)\big>.$$
Indeed, it is easy to see that $\mathcal{N}_{\mathcal{C}\setminus \{x_{i}\}}(A)=\mathcal{N}_{\mathcal{C}}(A)\setminus \{x_{i}\}$. Hence by Theorem \ref{v2.3}, we get $\mathrm{v}(I,x_{i})=\mathrm{v}(I(\mathcal{C}\setminus \{x_{i}\}))\leq \vert A\vert= \mathrm{v}(I).$ 
\end{proof}

\begin{proposition}\label{v3.10}
For a graph $G$, we have $\mathrm{v}(I(G))\leq \alpha_{0}(G)$.
\end{proposition}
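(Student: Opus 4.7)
The plan is to produce a stable set $A$ of $G$ with $|A|\le\alpha_0(G)$ such that $\mathcal{N}_G(A)$ is a minimal vertex cover, and then invoke Theorem \ref{v2.3}.

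First, I would dispose of the trivial case: if $G$ has no edges then $I(G)=(0)$, $\alpha_0(G)=0$, and there is nothing to prove. So assume $G$ has at least one edge, in which case $I(G)$ is not prime and Theorem \ref{v2.3} applies.

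Next, take a minimum vertex cover $C=\{x_{i_1},\ldots,x_{i_k}\}$ of $G$, so $k=\alpha_0(G)$. Since $C$ is minimum it is in particular minimal, so for each $j$ there exists an edge $e_j=\{x_{i_j},y_j\}$ with $e_j\cap C=\{x_{i_j}\}$, i.e.\ $y_j\in V(G)\setminus C$ (otherwise $C\setminus\{x_{i_j}\}$ would still be a vertex cover). Set
\[
A:=\{y_1,\ldots,y_k\}\subseteq V(G)\setminus C.
\]
Since $V(G)\setminus C$ is an independent set, $A$ is stable, and clearly $|A|\le k=\alpha_0(G)$ (the $y_j$'s need not be distinct).

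Now I would verify that $\mathcal{N}_G(A)=C$. The inclusion $C\subseteq \mathcal{N}_G(A)$ is immediate from the edges $e_j$: each $x_{i_j}\in C$ is adjacent to $y_j\in A$. For the reverse inclusion, any $z\in\mathcal{N}_G(A)$ gives an edge $\{z,a\}\in E(G)$ with $a\in A\subseteq V(G)\setminus C$; since $C$ is a vertex cover, $z\in C$. Hence $\mathcal{N}_G(A)=C$, which is a minimal vertex cover of $G$. By Lemma \ref{v2.2}(a), $(I(G):X_A)=\langle\mathcal{N}_G(A)\rangle=\langle C\rangle\in\mathrm{Ass}(I(G))$, so $A\in\mathcal{A}_G$. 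Applying Theorem \ref{v2.3} gives
\[
\mathrm{v}(I(G))\le |A|\le k=\alpha_0(G).
\]

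There is essentially no hard step here; the only point requiring a small argument is that $\mathcal{N}_G(A)$ is \emph{exactly} $C$ (not merely contains $C$), and this is handled by the fact that $A$ lies in the independent set $V(G)\setminus C$ together with $C$ being a vertex cover.
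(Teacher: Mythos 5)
Your proof is correct and follows essentially the same route as the paper: pick a minimum (hence minimal) vertex cover, choose for each of its vertices a "private" edge, collect the opposite endpoints into a stable set $A$ with $\mathcal{N}_G(A)$ equal to the cover, and apply Lemma \ref{v2.2} and Theorem \ref{v2.3}. Your write-up is in fact slightly more careful than the paper's, which asserts $\mathcal{N}_G(A)=C$ without spelling out the reverse inclusion.
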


\begin{proof}
Let $A$ be a minimal vertex cover of $G$ with $\vert A\vert=\alpha_{0}(G)$. Since $A$ is a minimal vertex cover for $G$, for each $x\in A$ there exists an edge $e_{x}\in E(G)$ which is not adjacent to any other vertex of $A$ i.e., $e_{x}\cap A=\{x\}$. Let $e_{x}=\{x,y_{x}\}$ for every $x\in A$ and $B=\{y_{x}\mid x\in A\}$. For different $x\in A$ some $y_{x}$ may coincide and so $\vert B\vert\leq \vert A\vert=\alpha_{0}(G)$. By our choice of $B$, it is clear that $A\cap B=\phi$ and so $B$ is a stable set in $G$. Also we have $\mathcal{N}_{G}(B)=A$ and hence by Lemma \ref{v2.2}, $(I(G):X_{B})=\big<\mathcal{N}_{G}(B)\big>$. 
Thus, Theorem \ref{v2.3} gives $\, \mathrm{v}(I(G))\leq \vert B\vert \leq \alpha_{0}(G)$. 
\end{proof}

\section{Bound of Regularity and Induced Matching Number by the $\mathrm{v}$-Number}
The \textit{line} graph of a graph $G$, denoted by $L(G)$, is a graph on the vertex set $V(L(G))=E(G)$ and the edge set 
$$E(L(G))=\{\{e_{i},e_{j}\}\subset E(G)\mid e_{i}\cap e_{j}\not=\phi\,\, \text{in}\,\, G\}.$$ For a positive integer $k$, the \textit{$k$-th power} of $G$, denoted by $G^{k}$, is the graph on the vertex set $V(G^{k})=V(G)$ such that there is an edge between two vertices of $G^{k}$ if and only if distance between the corresponding vertices in $G$ is less than or equal to $k$. 
\medskip

Finding a matching in a graph $G$ is equivalent to find an independent set in $L(G)$ (see \cite{bls}) and an induced matching in $G$ is equivalent to an independent set in $L^{2}(G)$, the square of $L(G)$ (see \cite{cam}). 
Now we want to know the relation between $\mathrm{v}(I(G))$ and 
$\mathrm{im}(G)$, which might be a step forward towards answering the 
Question \ref{v5.2}. In the next proposition, we try to see $\mathrm{v}(I(G))$ 
in terms of some invariant in the graph $L(G)$. What is remaining is 
to see the connection between $\mathrm{v}(I(G))$ with invariants of the 
graph $L^{2}(G)$.

\begin{proposition}\label{v4.1}
Let $G$ be a simple graph and $L(G)$ be its line graph. Suppose 
that $c(L(G))$ denotes the minimum number of cliques in $L(G)$, 
such that any vertex of $L(G)$ is either a vertex of those 
cliques or adjacent to some vertices of those cliques. Then 
$\mathrm{v}(I(G))=c(L(G))$.
\end{proposition}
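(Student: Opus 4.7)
The plan is to establish both inequalities $c(L(G)) \leq \mathrm{v}(I(G))$ and $\mathrm{v}(I(G)) \leq c(L(G))$, relying on Theorem \ref{v2.3}, which identifies $\mathrm{v}(I(G))$ with the minimum size of a stable set $A$ such that $\mathcal{N}_G(A)$ is a (minimal) vertex cover of $G$, together with the classical description of cliques in a line graph: any clique in $L(G)$ consists of edges of $G$ sharing a common vertex (a star-clique) or of the three edges of a triangle in $G$ (a triangle-clique).

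For the inequality $c(L(G)) \leq \mathrm{v}(I(G))$, I would pick $A \in \mathcal{A}_G$ realizing $|A| = \mathrm{v}(I(G))$ and exhibit the family $\{S_a : a \in A\}$, where $S_a$ is the set of $G$-edges incident to $a$, as a dominating collection of cliques in $L(G)$. Each $S_a$ is a clique in $L(G)$. Given an edge $e = \{u,v\}$ of $G$, if $e \cap A \neq \emptyset$ then $e \in S_a$ for any $a \in e \cap A$; otherwise $u,v \notin A$, and since $\mathcal{N}_G(A)$ is a vertex cover, without loss of generality $u \in \mathcal{N}_G(A)$, so $u \sim a$ for some $a \in A$; then $\{u,a\} \in S_a$ and $e$ shares the vertex $u$ with $\{u,a\}$, so $e$ is adjacent in $L(G)$ to a vertex of $S_a$. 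Hence $c(L(G)) \leq |A| = \mathrm{v}(I(G))$.

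For the reverse inequality, let $C_1,\ldots,C_k$ be a minimum clique-dominating family in $L(G)$ with $k = c(L(G))$. For each $C_j$, I would choose a representative $a_j \in V(G)$ with $V(C_j) \subseteq N_G[a_j]$: take $a_j$ to be the common vertex when $C_j$ is a star-clique, and any one of the three triangle vertices when $C_j$ is a triangle-clique (in that case all three work since any triangle vertex is adjacent to the other two). Put $A := \{a_1,\ldots,a_k\}$. The domination hypothesis on the $C_j$'s translates to: every edge of $G$ has an endpoint in $\bigcup_j V(C_j) \subseteq \bigcup_j N_G[a_j] = N_G[A]$; in particular $N_G[A]$ is a vertex cover of $G$. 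Provided $A$ is stable, any edge $\{a,v\}$ with $a \in A$ has $v \notin A$, so $v \in \mathcal{N}_G(A)$, whence $\mathcal{N}_G(A)$ is itself a vertex cover and $A \in \mathcal{A}_G$, giving $\mathrm{v}(I(G)) \leq |A| \leq k = c(L(G))$.

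The principal obstacle is precisely ensuring stability of $A$. I would first exploit the three-fold flexibility at each triangle-clique to avoid forcing adjacent representatives; when a conflict still arises between two star-cliques whose forced centers are adjacent in $G$, I expect to invoke the minimality of the clique cover to perform a local exchange (swap one of the offending star-cliques for a clique whose representative is non-adjacent to the remaining $a_j$'s, or show that the two adjacent centers allow us to drop one clique and replace it with a clique whose representative lies in $N_G(a_i)$). This exchange argument, applied iteratively, should reduce $A$ to a stable set of the same cardinality $\leq k$ with $N_G[A]$ still a vertex cover, completing the proof.
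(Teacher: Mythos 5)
Your first inequality, $c(L(G))\le \mathrm{v}(I(G))$, is correct and is essentially the paper's argument verbatim: the stars $S_a$ for $a\in A$ form a dominating family of cliques in $L(G)$. The trouble is the reverse inequality, and you have put your finger on exactly the right spot: everything hinges on choosing one representative vertex per clique so that the resulting set $A$ is stable, and your proposal only gestures at an exchange argument for this. That step cannot be completed, because the required stable choice need not exist. Take $G$ to be the tree with edges $\{x_1,x_2\}$, $\{x_1,a_i\}$, $\{a_i,a_i'\}$, $\{x_2,b_i\}$, $\{b_i,b_i'\}$ for $i=1,2$. The closed neighbourhood in $L(G)$ of the pendant edge $\{a_i,a_i'\}$ is $\{\{a_i,a_i'\},\{x_1,a_i\}\}$, and similarly on the $b$-side; since no edge touching $\{a_i,a_i'\}$ shares a vertex with an edge touching $\{b_j,b_j'\}$, a family of two dominating cliques is forced to consist of a sub-star at $x_1$ containing $\{x_1,a_1\},\{x_1,a_2\}$ and a sub-star at $x_2$ containing $\{x_2,b_1\},\{x_2,b_2\}$; such a pair does dominate $L(G)$, so $c(L(G))=2$, and every minimum family forces the two centres $x_1\sim x_2$, leaving no room for any exchange. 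Worse, the inequality $\mathrm{v}(I(G))\le c(L(G))$ itself fails here: a set $A$ with $\mathcal{N}_G(A)$ a vertex cover must meet each of $\{x_1,a_1,a_1'\}$, $\{x_1,a_2,a_2'\}$, $\{x_2,b_1,b_1'\}$, $\{x_2,b_2,b_2'\}$, so a two-element such $A$ must be $\{x_1,x_2\}$, which is not stable; hence by Theorem \ref{v2.3} one gets $\mathrm{v}(I(G))=3>2=c(L(G))$.

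For what it is worth, the paper's own proof of the reverse inequality founders on the same rock: after arranging the minimum dominating cliques to be pairwise disjoint (which is legitimate, since deleting shared vertices from all but one clique does not change the union being dominated), it asserts that disjointness of the cliques makes the chosen centres and triangle-vertices a stable set $B$ of $G$. Disjointness of the cliques as vertex subsets of $L(G)$ does not prevent their centres from being adjacent in $G$, as the example above shows. So your instinct that stability of $A$ is "the principal obstacle" is exactly right, but the obstacle is not removable by a local exchange; only the one-sided bound $c(L(G))\le \mathrm{v}(I(G))$, which your first paragraph (and the paper's) establishes, survives.
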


\begin{proof}
Lemma \ref{v2.2} and Theorem \ref{v2.3} ensure that there exists a stable 
set $A$ in $G$ such that
$$(I(G):X_{A})=\big<\mathcal{N}_{G}(A)\big>\,\, \text{and}\,\, \vert A\vert =\mathrm{v}(I(G)).$$
For  each $x_{i}\in A$, let $E_{G}(x_{i})=\{e_{i1},\ldots,e_{im_{i}}\}$ be the set of edges incident to the vertex $x_{i}$. Then $E_{G}(x_{i})$ forms clique in $L(G)$ for each $x_{i}\in A$. Since $A$ is stable, cliques corresponding to each $E_{G}(x_{i})$, where $x_{i}\in A$, are disjoint to each other. Let $e\in V(L(G))$ be a vertex other than the vertices of the cliques $E_{G}(x_{i})$, for $x_{i}\in A$. Let $e=\{u,v\}$ be the corresponding edge of $e$ in $G$. Then one of $u$ or $v$ should belong to $\mathcal{N}_{G}(A)$ as $\mathcal{N}_{G}(A)$ is a minimal vertex cover of $G$. Assume $u\in \mathcal{N}_{G}(A)$ and our choice of $e$ ensures that $v\not\in A$. Then $u\in \mathcal{N}_{G}(x_{i})$ and so $\{x_{i},u\}=e_{ik}$ for some $1\leq k\leq m_{i}$. Therefore $e$ and $e_{ik}$ being adjacent in $G$, we have $e$ is adjacent to the vertex $e_{ik}\in E_{G}(x_{i})$ in $L(G)$. Hence we have $c(L(G))\leq \vert A\vert= \mathrm{v}(I(G))$. 
\medskip

Now for the reverse inequality, let $r=c(L(G))$ and we can choose $r$ disjoint cliques $\mathcal{C}_{1},\ldots, \mathcal{C}_{r}$ in $L(G)$ such that any vertex of $L(G)$ is either a vertex of $\mathcal{C}_{i}$ or adjacent to some vertices of $\mathcal{C}_{i}$, $1\leq i\leq r$. Since each $\mathcal{C}_{i}$ is a clique in $L(G)$, corresponding edges in $G$ of vertices of $\mathcal{C}_{i}$, either shares a common vertex or they forms a triangle in $G$. Suppose corresponding edges of $\mathcal{C}_{1},\ldots,\mathcal{C}_{k}$ in $G$ share a common vertex, say $x_{1},\ldots,x_{k}$ respectively and corresponding edges  in $G$ of $\mathcal{C}_{k+1},\ldots,\mathcal{C}_{r}$ forms triangles. Take one vertex from each triangle formed by the corresponding edges  in $G$ of $\mathcal{C}_{k+1},\ldots,\mathcal{C}_{r}$, say $x_{k+1},\ldots,x_{r}$. Since $\mathcal{C}_{1},\ldots, \mathcal{C}_{r}$ are disjoint in $L(G)$, $B=\{x_{1}\ldots,x_{r}\}$ is a stable set in $G$. We will show $\mathcal{N}_{G}(B)$ forms a minimal vertex cover for $G$. Pick any $e=\{u,v\}\in E(G)$. Then $e\in V(L(G))$ and if $e\in \mathcal{C}_{i}$ for $1\leq i\leq r$ then one of $u$ or $v$ should belong to $\mathcal{N}_{G}(x_{i})$. Suppose $e$ is a vertex other than the vertices of $\mathcal{C}_{1},\ldots,\mathcal{C}_{r}$ in $L(G)$. Then $e\cap B=\phi$ and $e$ is adjacent to some vertex $e_{ij}\in \mathcal{C}_{i}$ in $L(G)$, $1\leq i\leq r$. Therefore $e$ and $e_{ij}$ share a common vertex, say $u$, in $G$. Then $u\in\mathcal{N}_{G}(x_{i})$ and so $\mathcal{N}_{G}(B)$ is a vertex cover for $G$. Thus using Lemma \ref{v2.2}, we get
$$(I(G):X_{B})=\big<\mathcal{N}_{G}(B)\big>$$
and hence $\mathrm{v}(I(G))\leq \vert B\vert=r=c(L(G))$.
\end{proof}

Let $G$ be a simple graph and $e\in E(G)$ be an edge.
\begin{enumerate}
\item[$\bullet$] We define $G\setminus e$ as the graph on $V(G)$ just by removing the edge $e$ from $E(G)$.
\item[$\bullet$] By $G_{e}$ we mean the induced subgraph of $G$ on the vertex set $V(G)\setminus \mathcal{N}_{G\setminus e}[e]$.
\item[$\bullet$] The contraction of $e$ on $G$ (see \cite{bcreg}, Definition 5.2), denoted by $G/e$, is defined by $V(G/e)=(V(G)\setminus e)\cup \{w\}$, where $w$ is a new vertex, and $E(G/e)=E(G\setminus e)\cup \{\{w,z\}: z\in\mathcal{N}_{G\setminus e}(e)\}$.
 \end{enumerate}
\medskip

Let is first cite some results which give ome bounds of 
$\mathrm{reg}(R/I(G))$ in terms of some graph obtained from $G$:
\begin{enumerate}
\item[(1)] From \cite{hahui}, Theorem 3.5 we get 
$$\mathrm{reg}(R/I(G))\leq \mathrm{max}\{\mathrm{reg}(R/I(G\setminus e)),\mathrm{reg}(R/I(G_{e}))+1\}.$$
\item[(2)] In \cite{bcreg}, Biyiko$\breve{\mathrm{g}}$lu and Civan proved that
$$ \mathrm{reg}(R/I(G/e))\leq \mathrm{reg}(R/I(G))\leq \mathrm{reg}(R/I(G/e))+1.$$
\item[(3)] (\cite{w}, Theorem 3). Let $J\subset V(G)$ be an induced clique in $G$. Then 
$$\mathrm{reg}(R/I(G))\leq \mathrm{reg} (R/I(G\setminus J)) + 1,$$ where $G\setminus J$ denotes the induced subgraph on $V(G)\setminus J$.
\end{enumerate}
\medskip

As a consequence of the above results, we prove the following 
Proposition \ref{v4.2}, which might be helpful in finding a 
relation between the $\mathrm{v}$-number and regularity using 
induction hypothesis.
\medskip

\begin{proposition}\label{v4.2}
Let $G$ be a simple graph. Then
\medskip

\begin{enumerate}[(i)]
\item $\mathrm{v}(I(G\setminus e))\leq \mathrm{v}(I(G))+1$, for any $e\in E(G)$.
\medskip

\item $\mathrm{v}(I(G))\leq \mathrm{v}(I(G\setminus J))+1$, where $J$ is a clique of $G$.
\medskip

\item There exists an edge $e\in E(G)$ such that $\mathrm{v}(I(G/e))\leq \mathrm{v}(I(G))$.
\medskip

\end{enumerate}  
\end{proposition}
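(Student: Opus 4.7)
My plan is to prove all three parts by combining Theorem \ref{v2.3} with Lemma \ref{v2.2}(b): the former reduces the v-number to a minimization over stable sets whose neighborhoods form minimal vertex covers, and the latter tells us that minimality of the cover is automatic once stability of the set and the vertex-cover property of its neighborhood are in hand. So in each part I will start from an optimal stable set $A$ on one side of the inequality, build a modified stable set $B$ on the other side whose cardinality lies in the claimed range, and verify that $\mathcal{N}(B)$ is a vertex cover by a case analysis over an arbitrary edge.

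For part (i), write $e=\{u,v\}$ and pick $A\in\mathcal{A}_G$ with $|A|=\mathrm{v}(I(G))$. Stability of $A$ forces $|A\cap e|\leq 1$, so WLOG $u\notin A$. If $v\notin A$ as well, or if $v\in A$ but $u$ still has some neighbor in $A$ through $G\setminus e$, then a direct check shows $A$ itself lies in $\mathcal{A}_{G\setminus e}$ and we get the stronger bound $\mathrm{v}(I(G\setminus e))\leq |A|$. The remaining case is $v\in A$ and $\mathcal{N}_G(u)\cap A=\{v\}$; here I set $B=A\cup\{u\}$, which is stable in $G\setminus e$ precisely because the only edge from $u$ to $A$ was $e$ and is now deleted. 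Verifying $\mathcal{N}_{G\setminus e}(B)$ covers every edge of $G\setminus e$ is short: edges incident to $u$ are automatic since $u\in B$, and the other edges inherit their cover from $\mathcal{N}_G(A)$.

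For part (ii), let $A'\in \mathcal{A}_{G\setminus J}$ realize $\mathrm{v}(I(G\setminus J))$ and split on whether some vertex of $J$ avoids $\mathcal{N}_G(A')$. If there exists $v\in J$ with no neighbor in $A'$, set $B=A'\cup\{v\}$; stability in $G$ holds by the choice of $v$, and since $J$ is a clique, $J\setminus\{v\}\subset \mathcal{N}_G(v)\subset \mathcal{N}_G(B)$, which takes care of every edge inside $J$ or between $J$ and the rest of $V(G)$, while edges inside $V(G)\setminus J$ remain covered by $\mathcal{N}_{G\setminus J}(A')\subset \mathcal{N}_G(B)$. This gives $\mathrm{v}(I(G))\leq |A'|+1$. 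If instead $J\subset \mathcal{N}_G(A')$, then already $\mathcal{N}_G(A')$ covers every edge of $G$ (the only new ones being edges touching $J$, which $J$-vertices inside $\mathcal{N}_G(A')$ handle), so $A'\in\mathcal{A}_G$ and even $\mathrm{v}(I(G))\leq|A'|$.

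For part (iii), the key is choosing the edge. Given $A\in\mathcal{A}_G$ with $|A|=\mathrm{v}(I(G))$, I claim there exist $a\in A$ and $b\in \mathcal{N}_G(a)\setminus \mathcal{N}_G(A\setminus\{a\})$: if not, $\mathcal{N}_G(A\setminus\{a\})=\mathcal{N}_G(A)$ for every $a\in A$, and Lemma \ref{v2.2}(b) places $A\setminus\{a\}$ inside $\mathcal{A}_G$, contradicting the minimality of $|A|$ guaranteed by Theorem \ref{v2.3}. With $e=\{a,b\}$ and $w$ the new vertex of $G/e$, I take $A'=(A\setminus\{a\})\cup\{w\}$. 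Stability of $A'$ in $G/e$ follows because no $a''\in A\setminus\{a\}$ is adjacent in $G/e$ to $w$: such adjacency would require $a''$ adjacent to $a$ in $G$ (ruled out by stability of $A$) or to $b$ in $G$ (ruled out by the choice of $b$). The cover property of $\mathcal{N}_{G/e}(A')$ splits cleanly: edges of $G/e$ incident to $w$ are immediate since $w\in A'$; edges not incident to $w$ lie in $E(G)$ and are covered by $\mathcal{N}_G(A)$, where any cover previously furnished by $a$ is now rerouted through $w\in A'$. This yields $|A'|=|A|$ and hence $\mathrm{v}(I(G/e))\leq \mathrm{v}(I(G))$.

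The recurring technical convenience is that we never have to verify minimality of a vertex cover by hand; Lemma \ref{v2.2}(b) does it for us every time. The main obstacle is the existence-of-$b$ argument in (iii): it is the one place where the proof genuinely uses that $|A|$ is minimum over $\mathcal{A}_G$, as opposed to merely $A\in\mathcal{A}_G$, and the correct phrasing of the vertex $b$ (a neighbor of $a$ that is not a neighbor of any other vertex of $A$) is exactly what is needed to guarantee both stability of $A'$ in $G/e$ and the absence of interference in covering edges through the contracted vertex $w$.
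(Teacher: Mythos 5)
Your proposal is correct and follows essentially the same route as the paper: in each part you take an optimal stable set from Theorem \ref{v2.3}, modify it (adding the freed endpoint of $e$ in (i), a clique vertex outside $\mathcal{N}_G(A')$ in (ii), swapping $a$ for the contracted vertex $w$ in (iii)), and invoke Lemma \ref{v2.2} to certify membership in $\mathcal{A}$; the case analyses match the paper's. Your explicit justification in (iii) that a suitable $b\in\mathcal{N}_G(a)\setminus\mathcal{N}_G(A\setminus\{a\})$ exists via minimality of $|A|$ is exactly the step the paper compresses into ``by minimality of $A$.''
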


\begin{proof}

\noindent (i): By Lemma \ref{v2.2} and Theorem \ref{v2.3}, there exists a stable set $A$ of $G$ such that 
$$ (I(G):X_{A})=\big<\mathcal{N}_{G}(A)\big>\,\, and\,\, \mathrm{v}(I(G))=\vert A\vert.$$
Clearly, $A$ is a stable set too in $G\setminus e$. 
\medskip

\noindent\textbf{Case-I:} Suppose $e\cap A=\phi$. Then $\mathcal{N}_{G\setminus e}(A)=\mathcal{N}_{G}(A)$ and it is also a vertex cover for $G\setminus e$. Thus using Lemma \ref{v2.2}, we have
$$(I(G\setminus e):X_{A})=\big<\mathcal{N}_{G\setminus e}(A)\big>,$$
which imply $\mathrm{v}(I(G\setminus e))\leq \mathrm{v}(I(G))$.
\medskip

\noindent\textbf{Case-II:} Let $e\cap A\not =\phi$ and $u\in e\cap A$, where $e=\{u,v\}$. Then $v\in \mathcal{N}_{G}(A)$. If $v\in\mathcal{N}_{G\setminus e}(A)$, then $\mathcal{N}_{G}(A)=\mathcal{N}_{G\setminus e}(A)$ is a vertex cover of $G\setminus e$. Therefore by Lemma \ref{v2.2}, 
$$(I(G\setminus e):X_{A})=\big<\mathcal{N}_{G\setminus e}(A)\big>,$$
and so $\mathrm{v}(I(G\setminus e))\leq \mathrm{v}(I(G))$. If $v\not\in\mathcal{N}_{G\setminus e}(A)$, then $A\cup \{v\}$ is a stable set in $G\setminus e$ and $\mathcal{N}_{G\setminus e}(A\cup\{v\})$ forms a vertex cover for $G\setminus e$. Again by Lemma \ref{v2.2}, we have 
$$ (I(G\setminus e):X_{A\cup\{v\}})=\big<\mathcal{N}_{G\setminus e}(A\cup\{v\})\big>.$$
Hence $\mathrm{v}(I(G\setminus e))\leq \vert A\vert +1=\mathrm{v}(I(G))+1$.
\medskip

\noindent (ii): From Lemma \ref{v2.2} and Theorem \ref{v2.3}, we have a stable set $A$ of $G\setminus J$ such that 
$$(I(G\setminus J):X_{A})=\big< \mathcal{N}_{G\setminus J}(A)\big>\,\, \text{and}\,\, \mathrm{v}(I(G\setminus J))=\vert A\vert.$$
Note that $A$ is also a stable set in $G$. If all vertices of $J$ is contained in $\mathcal{N}_{G}(A)$, then $\mathcal{N}_{G}(A)$ is a vertex cover of $G$ and by Lemma \ref{v2.2}, we have $(I(G):X_{A})=\big< \mathcal{N}_{G}(A)\big>$. Thus by Theorem \ref{v2.3}, $\mathrm{v}(I(G))\leq \mathrm{v}(I(G\setminus J))$. Suppose there is a vertex $x\in J$ such that $x\not\in \mathcal{N}_{G}(A)$. Then $A\cup\{x\}$ is a stable set in $G$ and $\mathcal{N}_{G}(A\cup\{x\})$ is a vertex cover of $G$. So by Lemma \ref{v2.2},
$$(I(G):X_{A\cup\{x\}})=\big< \mathcal{N}_{G}(A\cup\{x\})\big>.$$
Hence by Theorem \ref{v2.3}, $\mathrm{v}(I(G))\leq \mathrm{v}(I(G\setminus J))+1$.
\medskip

\noindent (iii): By Lemma \ref{v2.2} and Theorem \ref{v2.3}, there is a stable set $A$ of $G$ such that 
$$(I(G):X_{A})=\big<\mathcal{N}_{G}(A)\big>\,\, \text{and}\,\, \mathrm{v}(I(G))=\vert A\vert.$$
Let $u\in A$ and by minimality of $A$ there exists $v\in \mathcal{N}_{G}(u)$ such that $v\not\in \mathcal{N}_{G}(A\setminus\{u\})$. Contract the edge $e=\{u,v\}$ in $G$ and let after contracting $e$ we get the vertex $w$ in $G/e$ instead of $u$ and $v$. Then $B=(A\setminus \{u\})\cup\{w\}$ is a stable set in $G/e$. It is clear that $\mathcal{N}_{G/e}(B)$ is a vertex cover for $G/e$ and so using Lemma \ref{v2.2} we get 
$$(I(G/e):X_{B})=\big<\mathcal{N}_{G/e}(B)\big>.$$ 
Therefore by Theorem \ref{v2.3}, $\mathrm{v}(I(G/e))\leq \vert B\vert=\vert A\vert= \mathrm{v}(I(G))$.
\end{proof}

In \cite{lz}, Liu and Zhou gave formula for induced matching number of a graph in terms of its induced bipartite subgraph. Using that formula we show $\mathrm{v}(I(G))\leq \mathrm{im}(G)$ for any bipartite graph $G$ (see Theorem \ref{imbvr}). 
\begin{theorem}[\cite{lz}, Theorem 2.1]
For a simple graph $G$,
$$\mathrm{im}(G)=\underset{H}{\mathrm{max}}\,\mathrm{min}\{\vert X^{\prime}\vert : X^{\prime}\subseteq X\,\, \text{and}\,\, Y\subseteq \mathcal{N}_{H}(X^{\prime})\},$$
where $H$ is an induced bipartite subgraph of $G$ with partite sets $X,Y$ and has no isolated vertices.
\end{theorem}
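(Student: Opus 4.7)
The plan is to establish the two inequalities separately; write $\rho(G)$ for the right-hand side of the stated formula.

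For the inequality $\mathrm{im}(G)\leq \rho(G)$, pick a maximum induced matching $M_{0}=\{\{a_{i},b_{i}\}\mid 1\leq i\leq m\}$ of $G$, so that $m=\mathrm{im}(G)$. Take $H$ to be the subgraph of $G$ induced on $\bigcup_{i=1}^{m}\{a_{i},b_{i}\}$, and set $X=\{a_{1},\ldots,a_{m}\}$, $Y=\{b_{1},\ldots,b_{m}\}$. Since $M_{0}$ is an induced matching, $E(H)=M_{0}$, so $H$ is bipartite with these partite sets and has no isolated vertex. For any $X'\subseteq X$ with $Y\subseteq\mathcal{N}_{H}(X')$, the only neighbor of $b_{j}$ in $H$ is $a_{j}$, forcing $a_{j}\in X'$ for all $j$; thus $X'=X$ and the inner minimum equals $m$. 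So $\rho(G)\geq m$.

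For the reverse inequality $\mathrm{im}(G)\geq\rho(G)$, fix an arbitrary induced bipartite subgraph $H$ of $G$ with partite sets $X,Y$ and no isolated vertex, and let $X^{*}\subseteq X$ be a subset of minimum cardinality with $Y\subseteq\mathcal{N}_{H}(X^{*})$; it suffices to build an induced matching of $G$ of size $|X^{*}|$. By minimality of $X^{*}$, for every $x\in X^{*}$ there exists a \emph{private neighbor} $y_{x}\in Y$ whose only $H$-neighbor in $X^{*}$ is $x$ (otherwise $X^{*}\setminus\{x\}$ would still dominate $Y$). These private neighbors are pairwise distinct, so $M=\{\{x,y_{x}\}\mid x\in X^{*}\}$ is a matching in $H$ of size $|X^{*}|$. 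To see it is induced, note that $H$ being bipartite rules out edges inside $X^{*}$ or inside $\{y_{x}\}_{x\in X^{*}}$; the only remaining potential extra edges on $\bigcup M$ are of the form $\{x,y_{x'}\}$ with $x\neq x'$, but the private-neighbor property of $y_{x'}$ forbids such edges. Since $H$ is an induced subgraph of $G$, $M$ is also an induced matching of $G$, yielding $\mathrm{im}(G)\geq|X^{*}|$. Maximizing over $H$ gives $\mathrm{im}(G)\geq\rho(G)$.

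The main obstacle is the second direction, specifically the private-neighbor argument: one must exploit the minimality of $X^{*}$ to extract, for each $x\in X^{*}$, a distinct witness $y_{x}\in Y$ that is adjacent in $H$ only to $x$ among vertices of $X^{*}$, and then invoke bipartiteness to block every possible chord in the induced subgraph on $\bigcup M$. This private-neighbor trick is the combinatorial mechanism converting a minimum one-sided dominating set in a bipartite graph into an induced matching of the same size, and it is the only step that requires real thought; everything else is bookkeeping.
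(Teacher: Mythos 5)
The paper does not prove this statement at all: it is imported verbatim as Theorem~2.1 of Liu and Zhou \cite{lz}, so there is no in-paper argument to compare yours against. Your proof is correct and is essentially the standard (and, as far as I can tell, the original Liu--Zhou) argument: the direction $\mathrm{im}(G)\le\rho(G)$ by taking $H$ to be the subgraph induced on a maximum induced matching, and the reverse direction via the private-neighbor consequence of the minimality of $X^{*}$, with bipartiteness and the inducedness of $H$ in $G$ killing all possible chords on $\bigcup M$. Both steps check out, including the implicit facts that the inner minimum is attained (take $X'=X$, using that $H$ has no isolated vertices) and that an induced matching of an induced subgraph of $G$ is an induced matching of $G$.
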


\begin{theorem}[\cite{lz}, Theorem 2.3]\label{imb}
Let $G$ be a bipartite graph with partite sets $X,Y$ and has no isolated vertices. Then
{\fontsize{13}{14}\selectfont
 $$\mathrm{im}(G)=\underset{H}{\mathrm{max}}\,\mathrm{min}\{\vert X^{\prime}\vert : X^{\prime}\subseteq H\subseteq X\,\, \text{and}\,\, \mathcal{N}_{G}(X^{\prime})=\mathcal{N}_{G}(H)\}.$$}
\end{theorem}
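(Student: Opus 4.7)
The plan is to derive this bipartite refinement from the preceding Liu--Zhou max-min formula by exploiting that an induced subgraph of a bipartite graph is automatically bipartite, with partite sets contained in $X$ and $Y$. The argument has two directions.

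For $\mathrm{im}(G)\geq$ RHS, fix $H\subseteq X$ and consider the induced subgraph $G_H$ of $G$ on the vertex set $H\cup \mathcal{N}_G(H)$. Since $G$ is bipartite and has no isolated vertices, $G_H$ is bipartite with partite sets $H$ and $\mathcal{N}_G(H)$ and has no isolated vertices: every $x\in H$ has some neighbor in $Y$, which lies in $\mathcal{N}_G(H)$; and every $y\in \mathcal{N}_G(H)$ has a neighbor in $H$ by definition. For any $X'\subseteq H$ one has $\mathcal{N}_G(X')\subseteq \mathcal{N}_G(H)$, and since $G_H$ is induced this gives $\mathcal{N}_{G_H}(X') = \mathcal{N}_G(X')$. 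Consequently, the condition $\mathcal{N}_G(H)\subseteq \mathcal{N}_{G_H}(X')$ appearing in the preceding theorem is equivalent to $\mathcal{N}_G(X')=\mathcal{N}_G(H)$. Applying the preceding theorem with the induced bipartite subgraph $G_H$ yields $\mathrm{im}(G)\geq \min\{|X'|:X'\subseteq H,\ \mathcal{N}_G(X')=\mathcal{N}_G(H)\}$, and taking the maximum over $H\subseteq X$ completes this inequality.

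For $\mathrm{im}(G)\leq$ RHS, let $m=\mathrm{im}(G)$ and pick a maximum induced matching $M=\{e_1,\ldots,e_m\}$ with $e_i=\{x_i,y_i\}$, $x_i\in X$, $y_i\in Y$. Set $H=\{x_1,\ldots,x_m\}\subseteq X$. I claim that the only $X'\subseteq H$ with $\mathcal{N}_G(X')=\mathcal{N}_G(H)$ is $X'=H$: for each $i$, $y_i\in\mathcal{N}_G(H)=\mathcal{N}_G(X')$, so some $x_j\in X'$ is adjacent to $y_i$; if $j\neq i$, then $\{x_j,y_i\}$ is an edge of $G$ joining $e_i$ and $e_j$, contradicting the induced property of $M$. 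Hence $x_i\in X'$ for every $i$, i.e.\ $X'=H$, so the inner min for this particular $H$ equals $m$, and the max over $H$ is at least $m$.

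The substantive step is the use of $M$ being an \emph{induced} matching rather than merely a matching in the second inequality; the first inequality is essentially a translation of the preceding theorem to the bipartite setting, where the only technical point is the neighborhood identification $\mathcal{N}_{G_H}(X')=\mathcal{N}_G(X')$ for $X'\subseteq H$, which is a direct consequence of $G_H$ being an induced subgraph.
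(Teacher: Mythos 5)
This statement is quoted in the paper as Theorem~2.3 of Liu and Zhou and is not proved there, so there is no in-paper argument to compare against; your proposal must be judged on its own. It is correct. Both directions check out: for the lower bound on $\mathrm{im}(G)$, the induced subgraph $G_H$ on $H\cup\mathcal{N}_G(H)$ is indeed a legitimate choice in the general max--min formula (bipartite with parts $H$ and $\mathcal{N}_G(H)$, no isolated vertices because $G$ has none and every vertex of $\mathcal{N}_G(H)$ has a neighbour in $H$ by definition), and the identification $\mathcal{N}_{G_H}(X')=\mathcal{N}_G(X')$ for $X'\subseteq H$ holds because $\mathcal{N}_G(X')\subseteq\mathcal{N}_G(H)\subseteq V(G_H)$ and $G_H$ is induced, which converts the condition $\mathcal{N}_G(H)\subseteq\mathcal{N}_{G_H}(X')$ into $\mathcal{N}_G(X')=\mathcal{N}_G(H)$. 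For the upper bound, taking $H$ to be the $X$-ends of a maximum induced matching and observing that any $X'$ with $\mathcal{N}_G(X')=\mathcal{N}_G(H)$ must contain every $x_i$ (else an edge $\{x_j,y_i\}$ with $j\neq i$ would violate inducedness) is exactly the right use of the induced hypothesis, and it forces the inner minimum to equal $\mathrm{im}(G)$ for that choice of $H$. The only cosmetic remark is that the degenerate case $H=\emptyset$ contributes $0$ to the outer maximum and so is harmless; otherwise the argument is complete and is the natural way to deduce the bipartite refinement from the general formula.
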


\begin{theorem}\label{imbvr}
Let $G$ is a bipartite graph with partite sets $X$ and $Y$. Then $\mathrm{v}(I(G))\leq \mathrm{im}(G)$. Moreover, we have
 $$\mathrm{v}(I(G))\leq \mathrm{reg}(R/I(G)).$$
\end{theorem}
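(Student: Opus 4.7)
The plan is to combine the Liu--Zhou formula (Theorem \ref{imb}) with Lemma \ref{v2.2} and Theorem \ref{v2.3}. The key observation is that for a bipartite graph the partite set $Y$ is automatically a vertex cover, so any stable set $A\subseteq X$ with $\mathcal{N}_{G}(A)=Y$ is a stable set whose neighborhood is a (minimal) vertex cover, and hence directly witnesses an upper bound on $\mathrm{v}(I(G))$.

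First I would reduce to the case where $G$ has no isolated vertex: such vertices lie in no minimal vertex cover, hence in no associated prime of the square-free monomial ideal $I(G)$, and they contribute nothing to $\mathrm{im}(G)$, so discarding them changes neither side of the inequality. Next I apply Theorem \ref{imb} with the choice $H=X$. Since $G$ has no isolated vertex, $\mathcal{N}_{G}(X)=Y$, and the Liu--Zhou formula produces a subset $X^{\prime}\subseteq X$ with $\mathcal{N}_{G}(X^{\prime})=Y$ and $\vert X^{\prime}\vert\leq \mathrm{im}(G)$.

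The set $X^{\prime}$ is stable as it sits inside the stable set $X$. Since every edge of the bipartite graph $G$ has an endpoint in $Y$, the set $Y=\mathcal{N}_{G}(X^{\prime})$ is a vertex cover of $G$; by Lemma \ref{v2.2}(b) it is then a minimal vertex cover, and Lemma \ref{v2.2}(a) yields $(I(G):X_{X^{\prime}})=\langle Y\rangle \in \mathrm{Ass}(I(G))$. Theorem \ref{v2.3} now gives
$$\mathrm{v}(I(G))\leq \vert X^{\prime}\vert \leq \mathrm{im}(G).$$

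For the second inequality I would invoke the classical bound $\mathrm{im}(G)\leq \mathrm{reg}(R/I(G))$ of Katzman (proved independently by H\`a--Van Tuyl), valid for every simple graph; chaining it with the first gives $\mathrm{v}(I(G))\leq \mathrm{reg}(R/I(G))$. The argument has no real obstacle: the single insight needed is that $H=X$ is the correct input to Liu--Zhou, after which bipartiteness does the rest by handing $\mathcal{N}_{G}(X^{\prime})$ to us as a vertex cover for free.
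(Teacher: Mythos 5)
Your proof is correct and follows essentially the same route as the paper: take $H=X$ in the Liu--Zhou formula (Theorem \ref{imb}) to obtain a small $X^{\prime}\subseteq X$ with $\mathcal{N}_{G}(X^{\prime})=\mathcal{N}_{G}(X)$, observe that $X^{\prime}$ is stable with $\mathcal{N}_{G}(X^{\prime})$ a (minimal) vertex cover so that Lemma \ref{v2.2} and Theorem \ref{v2.3} give $\mathrm{v}(I(G))\leq\vert X^{\prime}\vert\leq\mathrm{im}(G)$, and then chain with the Katzman/H\`a--Van Tuyl bound $\mathrm{im}(G)\leq\mathrm{reg}(R/I(G))$. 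Your explicit reduction to the case without isolated vertices is a small added care (Theorem \ref{imb} assumes it) that the paper leaves implicit.
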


\begin{proof}
Let $X_{1}\subseteq X$ be such that $\mathcal{N}_{G}(X_{1})=\mathcal{N}_{G}(X)$ and
$$\vert X_{1}\vert=\mathrm{min}\{\vert X^{\prime}\vert : X^{\prime}\subseteq X\,\, \text{and}\,\, \mathcal{N}_{G}(X^{\prime})=\mathcal{N}_{G}(X)\}.$$
Then $X_{1}$ is a stable set in $G$ and $\mathcal{N}_{G}(X)$ being a minimal vertex cover for $G$, we have by Theorem \ref{v2.3}, $\mathrm{v}(I(G))\leq \vert X_{1}\vert$. Now taking $H=X$ in Theorem \ref{imb}, we get 
$$\mathrm{v}(I(G))\leq \vert X_{1}\vert\leq \mathrm{im}(G).$$
Therefore by (\cite{katz}, Lemma 2.2 and \cite{hahui}, Theorem 4.1),we have 
$$\mathrm{v}(I(G))\leq \mathrm{im}(G)\leq \mathrm{reg}(R/I(G)).$$
\end{proof}

\begin{corollary}\label{v4.11}
Let $G$ be a graph with a vertex $x\in V(G)$, such that any of the following holds:

\begin{enumerate}[(i)]
\item The independent complex $\Delta(G\setminus \{x\})$ or $\Delta(G\setminus \mathcal{N}_{G}[x])$ is vertex decomposable.

\item The graph $G\setminus \{x\}$ or $G\setminus \mathcal{N}_{G}[x]$ is a bipartite graph.

\end{enumerate}
Then $\mathrm{v}(I(G))\leq \mathrm{reg}(R/I(G))+1$.
\end{corollary}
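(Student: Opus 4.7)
The plan is to reduce the desired inequality $\mathrm{v}(I(G)) \leq \mathrm{reg}(R/I(G)) + 1$ to the corresponding inequality on a smaller induced subgraph $H$, where $H = G \setminus \{x\}$ or $H = G \setminus \mathcal{N}_G[x]$ depending on which half of the hypothesis is satisfied. Once we are on $H$, the bound $\mathrm{v}(I(H)) \leq \mathrm{reg}(R/I(H))$ is supplied by Theorem \ref{imbvr} in the bipartite branches of the hypothesis, and by Theorem \ref{v4.9} (or the obvious analogue for vertex decomposable independence complexes) in the vertex decomposable branches.

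When $H = G \setminus \{x\}$, I would regard the single vertex $\{x\}$ as a (trivial) clique of $G$ and invoke Proposition \ref{v4.2}(ii) to get $\mathrm{v}(I(G)) \leq \mathrm{v}(I(G \setminus \{x\})) + 1$. When $H = G \setminus \mathcal{N}_G[x]$, since $x \notin I(G)$, Corollary \ref{v3.8} gives $\mathrm{v}(I(G)) \leq \mathrm{v}(I(G):x) + 1$; a direct computation shows
$$(I(G):x) = I(G \setminus \mathcal{N}_G[x]) + \langle \mathcal{N}_G(x) \rangle,$$
and the two summands live on disjoint variable sets. By the additivity of the v-number (Proposition \ref{v3.5}) together with the observation that the v-number of any prime ideal is $0$, we get $\mathrm{v}(I(G):x) = \mathrm{v}(I(G \setminus \mathcal{N}_G[x]))$, and therefore $\mathrm{v}(I(G)) \leq \mathrm{v}(I(H)) + 1$.

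In both sub-cases I would then combine the reduction with the hypothesis inequality $\mathrm{v}(I(H)) \leq \mathrm{reg}(R/I(H))$ and with the standard monotonicity of regularity under passage to induced subgraphs of edge ideals, $\mathrm{reg}(R/I(H)) \leq \mathrm{reg}(R/I(G))$, to obtain $\mathrm{v}(I(G)) \leq \mathrm{reg}(R/I(G)) + 1$. The bipartite sub-cases are then completely mechanical.

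The main obstacle I foresee is the vertex decomposable sub-case: Theorem \ref{v4.9} is stated for $(C_4, C_5)$-free vertex decomposable graphs only, so one either applies it under an implicit additional restriction on $H$, or proves a separate bound $\mathrm{v}(I(H)) \leq \mathrm{reg}(R/I(H))$ for general vertex decomposable independence complexes by induction on a shedding vertex, using the parallel recursive behavior of $\mathrm{v}$ (via Proposition \ref{v4.2} and Corollary \ref{v3.8}) and of $\mathrm{reg}$ (via the short exact sequence relating $R/I$, $R/(I:x)$ and $R/(I,x)$). Once this $\mathrm{v}$--$\mathrm{reg}$ bound for $H$ is in hand, the rest of the argument is a clean application of the results already developed in Sections 3 and 4.
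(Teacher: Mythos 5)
Your proposal follows essentially the same route as the paper: reduce to $G\setminus\{x\}$ or $G\setminus\mathcal{N}_{G}[x]$ via $\mathrm{v}(I)\leq \mathrm{v}(I,x)+1$ and $\mathrm{v}(I)\leq \mathrm{v}(I:x)+1$ (the paper cites Lemma 3.12 of Jaramillo--Villarreal for both, which your derivations from Proposition \ref{v4.2}(ii) and from Corollary \ref{v3.8} plus the identity $(I(G):x)=I(G\setminus\mathcal{N}_{G}[x])+\big<\mathcal{N}_{G}(x)\big>$ and additivity correctly reproduce), then apply the $\mathrm{v}\leq\mathrm{reg}$ bound on the smaller graph together with monotonicity of regularity under passing to induced subgraphs. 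The obstacle you flag in the vertex decomposable branch is resolved in the paper simply by citing Theorem 3.13 of Jaramillo--Villarreal, which gives $\mathrm{v}(I)\leq\mathrm{reg}(R/I)$ for any graph whose independence complex is vertex decomposable with no $(C_{4},C_{5})$-free hypothesis, so no new induction on a shedding vertex is needed.
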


\begin{proof}
Let $I=I(G)$. If the condition $\mathrm{(i)}$ or $\mathrm{(ii)}$ holds, then by  Theorem \ref{imbvr} and (\cite{v}, Theorem 3.13), we have $$\mathrm{v}(I,x)\leq \mathrm{reg}(R/(I,x))\,\, \mathrm{or} \,\, \mathrm{v}(I:x)\leq \mathrm{reg}(R/(I:x)).$$ Now by (\cite{v}, Lemma 3.12), we have 
$$\mathrm{v}(I)\leq \mathrm{v}(I,x)+1\,\, \mathrm{and}\,\, \mathrm{v}(I)\leq \mathrm{v}(I:x)+1.$$
 Also $G\setminus \{x\}$ and $G\setminus \mathcal{N}_{G}[x]$ being a subgraph of $G$, (\cite{vil}, Proposition 6.4.6) implies that $\mathrm{reg}(R/(I,x))\leq \mathrm{reg}(R/I)$ and $ \mathrm{reg}(R/(I:x))\leq \mathrm{reg}(R/I)$. Therefore for any cases we have $\mathrm{v}(I)\leq \mathrm{reg}(R/I)+1$.
\end{proof}

\begin{corollary}
If $G$ is an unicyclic graph i.e., a graph with only one induced cycle, then $\mathrm{v}(I(G))\leq \mathrm{reg}(R/I(G))+1$.
\end{corollary}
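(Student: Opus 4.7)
The plan is to invoke Corollary \ref{v4.11}(ii), so what I need is a single vertex $x\in V(G)$ for which $G\setminus\{x\}$ is bipartite. First I would dispose of the trivial case: if $G$ itself is already bipartite, then Theorem \ref{imbvr} yields the stronger bound $\mathrm{v}(I(G))\leq \mathrm{reg}(R/I(G))$, and we are done without even appealing to Corollary \ref{v4.11}.

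So assume $G$ is not bipartite. Then $G$ contains an odd cycle, and therefore an induced odd cycle; since by hypothesis $G$ has a unique induced cycle $C$, the cycle $C$ must itself be odd. I would then pick any $x\in V(C)$ and claim that $G\setminus\{x\}$ is a forest. The key combinatorial observation is that every induced cycle of $G\setminus\{x\}$ is also an induced cycle of $G$: indeed, any chord of a cycle $C'\subseteq G\setminus\{x\}$ is an edge of $G$ whose endpoints lie in $V(C')$ and hence not incident to $x$, so chordlessness of $C'$ in $G\setminus\{x\}$ forces chordlessness of $C'$ in $G$. Since the only induced cycle of $G$ contains $x$, there are no induced cycles left in $G\setminus\{x\}$, which is therefore acyclic and a fortiori bipartite.

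At this point Corollary \ref{v4.11}(ii), applied to this choice of $x$, delivers $\mathrm{v}(I(G))\leq \mathrm{reg}(R/I(G))+1$, as required. I do not anticipate a genuine obstacle here: all of the substantive work is already encapsulated in Corollary \ref{v4.11} (and, through it, in Theorem \ref{imbvr}), and everything beyond that is the one-line observation above about how induced cycles behave under vertex deletion. The only thing to double-check is the dichotomy "$G$ bipartite vs.\ unique induced cycle odd", which is immediate from the fact that an odd closed walk contains an induced odd cycle.
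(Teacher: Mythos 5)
Your argument is correct and is essentially the paper's proof: the paper likewise deletes a vertex $x$ of the unique induced cycle, observes that $G\setminus\{x\}$ is bipartite (being an induced subgraph with no induced cycles, hence a forest), and applies Corollary \ref{v4.11}. Your extra case split on whether $G$ is bipartite and the parity discussion are harmless but not needed, since the deletion argument works regardless of the parity of the cycle.
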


\begin{proof}
Choose a vertex $x$ from the unique induced cycle of $G$. Then $G\setminus \{x\}$ is a bipartite graph and hence, by Corollary \ref{v4.11}, the result follows.
\end{proof}

\begin{definition}[\cite{cam}]{\rm
A \textit{clique-neighbourhood} $K_{c}$ is the set of edges of a clique $c$ in a graph  $G$ together with some edges which are adjacent to some edges of the clique $c$.
}
\end{definition}

\begin{theorem}[\cite{cam}, Theorem 2]\label{v4.7}
Let $G$ be a chordal graph. Then 
\begin{align*}
\mathrm{im}(G)=\mathrm{min}\{  \vert \mathscr{N}\vert : & \mathscr{N} \,\, \text{is\,\, a\,\, set\,\, of\,\, clique-neighbourhoods}\\ & \text{in}\,\, G\,\,\text{which\,\, covers}\,\, E(G)\}.
\end{align*}
\end{theorem}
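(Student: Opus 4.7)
The plan is to prove the asserted equality as two separate inequalities; only the lower bound on the covering number will require chordality.

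For the direction $\mathrm{im}(G) \leq |\mathscr{N}|$, valid for every graph $G$ and every clique-neighbourhood cover $\mathscr{N} = \{K_{c_{1}},\ldots,K_{c_{m}}\}$ of $E(G)$, I would argue by a pigeonhole assignment. Given an induced matching $M = \{e_{1},\ldots,e_{k}\}$ in $G$, let $\sigma(i)$ be any index with $e_{i}\in K_{c_{\sigma(i)}}$. The key observation is that every edge in a clique-neighbourhood $K_{c}$ has at least one endpoint in the vertex set $V(c)$ of $c$ (both endpoints if the edge belongs to $E(c)$; one endpoint if it is merely adjacent to an edge of $c$, by definition of adjacency of edges). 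Hence, if $\sigma(i)=\sigma(j)=l$ for some $i\neq j$, picking $a\in e_{i}\cap V(c_{l})$ and $b\in e_{j}\cap V(c_{l})$ yields either $a=b$, so that $e_{i}$ and $e_{j}$ share a vertex, or $a\neq b$, in which case $\{a,b\}\in E(G)$ since $c_{l}$ is a clique, producing an edge of $G$ joining $e_{i}$ and $e_{j}$. Either conclusion contradicts $M$ being an induced matching, so $\sigma$ is injective and $k\leq m$.

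For the reverse inequality, I would induct on $|V(G)|$, using that every chordal graph admits a simplicial vertex $v$ whose open neighbourhood $\mathcal{N}_{G}(v)$ induces a clique; set $G' = G\setminus \{v\}$, which is again chordal. The induction hypothesis yields a clique-neighbourhood cover $\mathscr{N}'$ of $E(G')$ with $|\mathscr{N}'|=\mathrm{im}(G')$, and one has to produce a cover of $E(G)$ of size $\mathrm{im}(G)$. Since $\mathrm{im}(G)-\mathrm{im}(G')\in\{0,1\}$, there are two subcases. When $\mathrm{im}(G) = \mathrm{im}(G') + 1$, one appends to $\mathscr{N}'$ the clique-neighbourhood $K_{\mathcal{N}_{G}[v]}$ consisting of all edges of $G$ incident to $\mathcal{N}_{G}[v]$; this is legitimate because $\mathcal{N}_{G}[v]$ is a clique in $G$. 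When $\mathrm{im}(G) = \mathrm{im}(G')$, one argues that the edges $\{v,u\}$ for $u\in \mathcal{N}_{G}(v)$ can all be absorbed into existing members of $\mathscr{N}'$: since $\mathcal{N}_{G}(v)$ is itself a clique in $G'$, each such $u$ is incident in $G'$ to another vertex of $\mathcal{N}_{G}(v)$ (unless $|\mathcal{N}_{G}(v)|=1$, a degenerate case handled separately), so some $K_{c}\in\mathscr{N}'$ contains an edge at $u$, and after choosing $\mathscr{N}'$ suitably one gets $u\in V(c)$, whence $\{v,u\}$ is adjacent to an edge of $c$ and can be appended to $K_{c}$.

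The main obstacle is precisely the subcase $\mathrm{im}(G) = \mathrm{im}(G')$: one must ensure the inductive cover $\mathscr{N}'$ is compatible, in the sense that for every $u\in \mathcal{N}_{G}(v)$ there exists $K_{c}\in\mathscr{N}'$ with $u\in V(c)$, so that $\{v,u\}$ can be absorbed. Either one strengthens the induction hypothesis to carry this structural information, or one sidesteps the issue entirely by processing the vertices in a perfect elimination ordering $v_{1},\ldots,v_{n}$ of $G$ and at each step either opening a new clique-neighbourhood rooted at the simplicial clique at $v_{i}$ (precisely when $v_{i}$ contributes a fresh edge to a running maximum induced matching) or absorbing its incident edges into an already-created clique-neighbourhood. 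This latter route localizes the use of chordality to the existence of the elimination ordering and reduces the remaining combinatorial content to a single greedy decision per vertex.
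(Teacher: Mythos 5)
First, note that the paper does not prove this statement at all: it is quoted verbatim from Cameron's paper \cite{cam}, so your attempt is being measured against Cameron's original argument rather than anything in the text. Your first inequality, $\mathrm{im}(G)\leq\vert\mathscr{N}\vert$ for an arbitrary clique-neighbourhood cover $\mathscr{N}$, is correct and complete: every edge of $K_{c}$ meets $V(c)$, and two disjoint edges each meeting the clique $c$ are joined by an edge of $c$, so no two edges of an induced matching can lie in a common $K_{c}$. This half needs no chordality, as you say.

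The other inequality is where the theorem actually lives, and your argument for it has a genuine gap that you yourself flag but do not close. In the subcase $\mathrm{im}(G)=\mathrm{im}(G\setminus\{v\})$ you need, for every neighbour $u$ of the simplicial vertex $v$, some clique $c$ underlying a member of the inductive cover $\mathscr{N}'$ with $u\in V(c)$ and $\vert V(c)\vert\geq 2$; the bare induction hypothesis does not provide this, and the failure is concrete. Take $G'$ to be the path on $a,b,c,d,e$ (in this order) with the size-two cover built on the cliques $\{a,b\}$ and $\{d,e\}$, and attach a pendant vertex $v$ at $c$: then $G$ is chordal, $\mathrm{im}(G)=\mathrm{im}(G')=2$, but the edge $\{v,c\}$ is adjacent to no edge of either clique, so it cannot be absorbed; one must first replace the cover by one based on $\{b,c\}$ and $\{d,e\}$. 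Making ``choose $\mathscr{N}'$ suitably'' precise, or proving that your greedy pass along a perfect elimination ordering outputs a cover of size exactly $\mathrm{im}(G)$, is the entire combinatorial content of the theorem, and it is not supplied. Cameron's actual proof takes a different route: she shows that $[L(G)]^{2}$ is chordal (hence perfect) when $G$ is chordal, identifies induced matchings of $G$ with independent sets of $[L(G)]^{2}$ and clique-neighbourhoods with cliques of $[L(G)]^{2}$, and then equates the independence number with the clique cover number. Unless you can complete the absorption step, I would either reproduce that argument or simply cite the result, as the paper does.
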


We now prove that $\mathrm{v}(I(G))\leq \mathrm{im}(G)=\mathrm{reg}(R/I(G))$ is true for chordal graphs. 
This also follows from Theorem \ref{v4.9}, 
where we prove the same inequality for a more general 
class. However, the proofs of Theorem \ref{v4.8} and 
Theorem \ref{v4.9} are of different flavour. 

\begin{theorem}\label{v4.8}
For a chordal graph $G$, we have 
$$\mathrm{v}(I(G))\leq \mathrm{im}(G)=\mathrm{reg}(R/I(G)).$$
\end{theorem}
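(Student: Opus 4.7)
The plan is to split the statement into two inequalities. The equality $\mathrm{im}(G)=\mathrm{reg}(R/I(G))$ for a chordal graph is the classical H\`a--Van Tuyl theorem, which I would invoke; it then remains to show $\mathrm{v}(I(G))\leq \mathrm{im}(G)$.

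For the main inequality I plan to induct on $|V(G)|$, pairing Cameron's combinatorial formula (Theorem \ref{v4.7}) with the clique-deletion bound of Proposition \ref{v4.2}(ii). If $E(G)=\phi$ both sides vanish, so assume $k:=\mathrm{im}(G)\geq 1$. Theorem \ref{v4.7} produces cliques $c_{1},\ldots,c_{k}$ of $G$ whose clique-neighbourhoods cover $E(G)$, and Proposition \ref{v4.2}(ii) with $J=c_{1}$ gives
$$\mathrm{v}(I(G))\leq \mathrm{v}(I(G\setminus c_{1}))+1.$$
The induced subgraph $G\setminus c_{1}$ is again chordal, so once I establish the subclaim $\mathrm{im}(G\setminus c_{1})\leq k-1$, the inductive hypothesis will yield $\mathrm{v}(I(G\setminus c_{1}))\leq k-1$ and hence $\mathrm{v}(I(G))\leq k=\mathrm{im}(G)$.

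The subclaim I would prove by restricting the given cover to $G\setminus c_{1}$. Every edge of $K_{c_{1}}$ is incident to $V(c_{1})$, so each $e\in E(G\setminus c_{1})$ must lie in $K_{c_{i}}$ for some $i\geq 2$, and therefore has an endpoint in the clique $c_{i}\setminus V(c_{1})$ of $G\setminus c_{1}$. This places $e$ inside the clique-neighbourhood of $c_{i}\setminus V(c_{1})$ viewed inside $G\setminus c_{1}$, exhibiting a covering of $E(G\setminus c_{1})$ by at most $k-1$ clique-neighbourhoods, and Theorem \ref{v4.7} then delivers $\mathrm{im}(G\setminus c_{1})\leq k-1$. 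The hard part will be the degenerate case when $c_{i}\setminus V(c_{1})$ collapses to a single vertex $u$: then $\{u\}$ carries no edges and the clique-neighbourhood $K_{\{u\}}$ is ambiguous. I would circumvent this by enlarging $\{u\}$ to a two-element clique $\{u,w\}\subseteq V(G\setminus c_{1})$, where $w$ is any neighbour of $u$ inside $G\setminus c_{1}$; such a $w$ must exist whenever $u$ actually bounds an uncovered edge, and the clique-neighbourhood of $\{u,w\}$ engulfs every edge of $G\setminus c_{1}$ incident to $u$. With this small patch the subclaim goes through and the induction closes.
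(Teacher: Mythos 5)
Your proof is correct, but it takes a genuinely different route from the paper's. The paper argues directly and without induction: given an arbitrary clique-neighbourhood cover $\{K_{c_{1}},\ldots,K_{c_{m}}\}$ of $E(G)$, it chooses a maximal stable set $A$ inside $\bigcup_{i}V(c_{i})$, verifies that $\mathcal{N}_{G}(A)$ is a vertex cover of $G$, and concludes $\mathrm{v}(I(G))\leq \vert A\vert\leq m$ from Lemma \ref{v2.2} and Theorem \ref{v2.3}, since a stable set meets each clique in at most one vertex; minimizing over covers and applying Theorem \ref{v4.7} gives $\mathrm{v}(I(G))\leq \mathrm{im}(G)$, and the equality with the regularity is cited from H\`a--Van Tuyl just as you do. You instead peel off one clique at a time, replacing the stable-set construction by Proposition \ref{v4.2}(ii) together with a new combinatorial lemma: deleting the vertex set of one clique from an optimal Cameron cover drops the covering number, hence $\mathrm{im}$, by at least one. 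What your version buys is modularity --- the algebraic content is entirely delegated to Proposition \ref{v4.2}(ii), and all that remains is a restriction-of-covers argument; what it costs is exactly the degenerate bookkeeping you identify, namely the residual singletons $c_{i}\setminus V(c_{1})$, which the paper's one-shot argument never meets. Your patch for that case is sound: any surviving edge covered by $K_{c_{i}}$ must touch $V(c_{i})\setminus V(c_{1})$, so if that set is a single vertex $u$ still incident to uncovered edges, $u$ has a neighbour $w$ in $G\setminus c_{1}$ and $K_{\{u,w\}}$ absorbs everything $K_{c_{i}}$ was responsible for (and if $V(c_{i})\subseteq V(c_{1})$, that clique-neighbourhood covers no surviving edge and may be dropped). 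Both proofs rest on Theorem \ref{v4.7} and are of comparable length; yours adds a third flavour to the two the paper already contrasts in Theorems \ref{v4.8} and \ref{v4.9}.
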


\begin{proof}
Let $\mathscr{N}$ be a set of clique-neighbourhoods in $G$ which covers $E(G)$. Let $\mathscr{N}=\{K_{c_{1}},\ldots,K_{c_{m}}\}$, where each $K_{c_{i}}$, for $1\leq i\leq m$, is a clique-neighbourhood containing the clique $c_{i}$ such that every edge of $K_{c_{i}}$ is adjacent to some edges of $c_{i}$. Now choose a maximal stable set from the set of vertices $\bigcup_{i=1}^{m}V(c_{i})$, named it $A$. Since $A$ is maximal stable set in $\bigcup_{i=1}^{m}V(c_{i})$, we have $\bigcup_{i=1}^{m}V(c_{i})\setminus A \subset \mathcal{N}_{G}(A)$. Let $e\in E(G)$ be any edge. Then $e\in K_{c_{i}}$ for some $1\leq i\leq m$. If $e\in E(c_{i})$ then $e\cap \mathcal{N}_{G}(A)\not = \phi$. Suppose $e\not\in E(c_{i})$. Then $e$ is adjacent to some edges of $c_{i}$ i.e., $e$ is incident to some vertex $v\in V(c_{i})$. Now if $v\not \in \mathcal{N}_{G}(A)$ then $v\in A$ as $A$ is maximal stable set in  $\bigcup_{i=1}^{m}V(c_{i})$ and so $e\setminus \{v\}\in \mathcal{N}_{G}(A)$. As $e\in E(G)$ is arbitrary chosen edge, $\mathcal{N}_{G}(A)$ is a vertex cover of $G$. Therefore by Lemma \ref{v2.2}, we have 
$$ (I(G): X_{A})=\big< \mathcal{N}_{G}(A)\big> ,$$
and so Theorem \ref{v2.3} gives $\mathrm{v}(I(G))\leq \vert A\vert$. Since $A\subset \bigcup_{i=1}^{m}V(c_{i})$ is stable set and $c_{i}$s are cliques, $\vert A\vert\leq m=\vert \mathscr{N}\vert$. This is true for any set of clique-neighbourhoods in $G$ which covers $E(G)$. Hence by Theorem \ref{v4.7}, $\mathrm{v}(I(G))\leq \mathrm{im}(G)$ and by (\cite{havan}, Corollary 6.9) $\mathrm{im}(G)=\mathrm{reg}(R/I(G))$.
\end{proof}

\begin{theorem}\label{v4.9}
If $G$ is a $(C_{4},C_{5})$-free vertex-decomposable graph, then $\mathrm{v}(I(G))\leq \mathrm{im}(G)=\mathrm{reg}(I(G))$.
\end{theorem}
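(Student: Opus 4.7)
The plan is to split the theorem into two pieces and handle them separately. The equality $\mathrm{im}(G)=\mathrm{reg}(R/I(G))$ for $(C_{4},C_{5})$-free vertex-decomposable graphs is a known result from the regularity literature (refining Woodroofe's upper bound in the same class), which I would cite directly. The inequality $\mathrm{v}(I(G))\leq\mathrm{im}(G)$ is the substantive combinatorial-algebraic content, and I would prove it by induction on $|V(G)|$; the edgeless case is vacuous.

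For the inductive step, vertex-decomposability of $\Delta(G)$ furnishes a shedding vertex $v$. Setting $G':=G\setminus N_G[v]$, the independence complex $\Delta(G')=\mathrm{lk}_{\Delta(G)}(v)$ is vertex-decomposable by the definition, while $(C_{4},C_{5})$-freeness passes to the induced subgraph $G'$. Hence the inductive hypothesis applies to $G'$. The colon ideal splits as
\[
(I(G):v)=\bigl(N_G(v)\bigr)\,R+I(G')\,R,
\]
a sum of two monomial ideals whose minimal generators involve disjoint sets of variables. Observing that the v-number is unaffected by the presence of free variables and that the prime ideal $(N_G(v))$ has v-number $0$ (take $f=1$), the additivity result of Proposition \ref{v3.5} gives
\[
\mathrm{v}((I(G):v))=\mathrm{v}\bigl((N_G(v))\bigr)+\mathrm{v}(I(G'))=\mathrm{v}(I(G')).
\]
Applying Corollary \ref{v3.8} then yields
\[
\mathrm{v}(I(G))\leq\mathrm{v}((I(G):v))+1=\mathrm{v}(I(G'))+1\leq\mathrm{im}(G')+1,
\]
where the last inequality is the inductive hypothesis.

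The principal obstacle, and the point at which the hypotheses on $G$ are genuinely used, is promoting $\mathrm{im}(G')+1$ to $\mathrm{im}(G)$. For an arbitrary shedding vertex this can fail, since a maximum induced matching of $G\setminus N_G[v]$ need not extend to one of $G$. A clean sufficient condition is that the shedding vertex $v$ admits a neighbor $w$ with $N_G(w)\subseteq N_G[v]$: then for any induced matching $M\subseteq E(G')$ the augmentation $M\cup\{\{v,w\}\}$ is an induced matching of $G$, because $V(M)\subseteq V(G)\setminus N_G[v]$ is disjoint from every $G$-neighbor of both $v$ and $w$. Thus the heart of the argument is the existence of such a shedding vertex in any $(C_{4},C_{5})$-free vertex-decomposable graph. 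This is the same structural input that underlies the known proof of $\mathrm{im}(G)=\mathrm{reg}(R/I(G))$ in this class, and I expect extracting (or re-deriving) it to be the main technical step of the proof.
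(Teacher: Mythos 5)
Your proposal has a genuine gap, and it is exactly the step you flag at the end: you never establish that every $(C_{4},C_{5})$-free vertex-decomposable graph with at least one edge possesses a shedding vertex $v$ admitting a neighbor $w$ with $N_{G}(w)\subseteq N_{G}[v]$. Without that structural lemma the induction does not close, since (as you correctly observe) an arbitrary shedding vertex gives only $\mathrm{v}(I(G))\leq \mathrm{im}(G\setminus N_{G}[v])+1$, and $\mathrm{im}(G\setminus N_{G}[v])+1$ need not be bounded by $\mathrm{im}(G)$. This is not a routine verification: it is essentially the ``codominated vertex'' machinery of Biyiko\u{g}lu--Civan underlying their equivalence of vertex decomposability, codismantlability and shellability for $(C_{4},C_{5})$-free graphs, and it is the entire content of the inequality $\mathrm{v}(I(G))\leq\mathrm{im}(G)$ in your scheme. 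Writing ``I expect extracting (or re-deriving) it to be the main technical step'' concedes that the proof is incomplete at its only nontrivial point. (Two smaller issues: you should treat separately the case where $G\setminus N_{G}[v]$ has no edges, since then $I(G\setminus N_{G}[v])=0$ and the v-number of the zero ideal is not defined --- though there $(I(G):v)$ is already prime and the bound $\mathrm{v}(I(G))\leq 1\leq\mathrm{im}(G)$ is immediate; and the additivity step, while correct, is heavier than needed.)

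The paper avoids all of this. Its proof is two citations: by Biyiko\u{g}lu--Civan (Theorem 24 of \cite{bc}), $(C_{4},C_{5})$-freeness plus vertex decomposability gives $\mathrm{im}(G)=\mathrm{reg}(R/I(G))$; and by Jaramillo--Villarreal (Theorem 3.13 of \cite{v}), vertex decomposability of the independence complex alone gives $\mathrm{v}(I(G))\leq\mathrm{reg}(R/I(G))$. Chaining the two yields $\mathrm{v}(I(G))\leq\mathrm{im}(G)$ with no induction and no shedding-vertex analysis. If you want a self-contained inductive argument of the kind you sketch, the honest route is to first prove (or quote) the codominated-shedding-vertex lemma; otherwise you should simply invoke the known bound $\mathrm{v}\leq\mathrm{reg}$ for vertex-decomposable graphs as the paper does.
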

\begin{proof}
If $G$ is a $(C_{4},C_{5})$-free vertex-decomposable graph, then by (\cite{bc}, Theorem 24), we get $\mathrm{im}(G)=\mathrm{reg}(I(G))$ and also $G$ being a vertex decomposable graph, by (\cite{v}, Theorem 3.13), we get $\mathrm{v}(I(G))\leq \mathrm{reg}(I(G))$.
\end{proof}

Let $G$ be a graph with $V(G)=\{x_{1},\ldots,x_{n}\}$. Consider the graph $W_{G}$ by adding a new set of vertices $Y=\{y_{1},\ldots,y_{n}\}$ to $G$ and attaching the edges $\{x_{i},y_{i}\}$ to $G$ for each $1\leq i\leq n$. The graph $W_{G}$ is known as the \textit{whisker graph} of $G$ and the attached edges $\{x_{i}, y_{i}\}$ are called the \textit{whiskers}.

\begin{theorem}\label{v4.10}
Let $G$ be a simple graph and $W_{G}$ be the whisker graph of $G$. Then $\mathrm{v}(I(W_{G}))\leq \mathrm{im}(W_{G})$.
\end{theorem}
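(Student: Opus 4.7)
The plan is to exhibit a single combinatorial object---a maximum independent set $T$ of $G$---that simultaneously witnesses both a suitable upper bound on $\mathrm{v}(I(W_{G}))$ and a lower bound on $\mathrm{im}(W_{G})$. The bridge between the two sides of the inequality will be the independence number $\beta_{0}(G)$.

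First, I would let $T\subseteq V(G)$ be a maximum independent set of $G$ (so $|T|=\beta_{0}(G)$) and view $T$ as a subset of $V(W_{G})$. Since the edges added in passing from $G$ to $W_{G}$ are the whiskers $\{x_{i},y_{i}\}$, none of which is contained in $V(G)$, the set $T$ remains stable in $W_{G}$. A direct computation gives
$$\mathcal{N}_{W_{G}}(T)=\mathcal{N}_{G}(T)\cup\{y_{j}\mid x_{j}\in T\},$$
and this union is disjoint from $T$ because $T$ is independent in $G$. Since a maximum independent set of $G$ is automatically a maximal independent set, every $x_{j}\notin T$ is adjacent in $G$ to some vertex of $T$. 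From this I would verify that $\mathcal{N}_{W_{G}}(T)$ is a vertex cover of $W_{G}$: every edge of $G$ has an endpoint outside $T$ (hence in $\mathcal{N}_{G}(T)$), and for each whisker $\{x_{j},y_{j}\}$ either $j\in T$ (so $y_{j}\in\mathcal{N}_{W_{G}}(T)$) or $j\notin T$ (so $x_{j}\in\mathcal{N}_{G}(T)$). By Lemma \ref{v2.2}(a) and (b), this forces $T\in\mathcal{A}_{W_{G}}$, and Theorem \ref{v2.3} then yields
$$\mathrm{v}(I(W_{G}))\leq |T|=\beta_{0}(G).$$

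Second, I would show that $\beta_{0}(G)\leq \mathrm{im}(W_{G})$ by exhibiting the whiskers indexed by $T$ as an induced matching. Set $M:=\{\{x_{j},y_{j}\}\mid x_{j}\in T\}$. The induced subgraph of $W_{G}$ on $V(M)=\{x_{j},y_{j}\mid x_{j}\in T\}$ can only contain edges of $G$ among $\{x_{j}:x_{j}\in T\}$ (none, since $T$ is independent in $G$), the whiskers $\{x_{j},y_{j}\}$ for $x_{j}\in T$ (which are precisely the edges of $M$), and no further edges because each $y_{j}$ has degree one in $W_{G}$. Thus $M$ is an induced matching of $W_{G}$ with $|M|=|T|=\beta_{0}(G)$, so $\mathrm{im}(W_{G})\geq \beta_{0}(G)$.

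Combining the two steps gives the desired
$$\mathrm{v}(I(W_{G}))\leq \beta_{0}(G)\leq \mathrm{im}(W_{G}).$$
There is no serious obstacle in this argument; the only delicate point is that $\mathcal{N}_{W_{G}}(T)$ must be verified to be a vertex cover of $W_{G}$ itself (not merely of $G$), and this is exactly what forces $T$ to be a \emph{maximal} independent set, a property we obtain for free by choosing $T$ to be maximum.
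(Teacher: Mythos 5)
Your proof is correct and follows essentially the same route as the paper: both take a maximal independent set of $G$ (you specialize to a maximum one, which is immaterial), use the whiskers over it as the induced matching, and verify via Lemma \ref{v2.2} and Theorem \ref{v2.3} that its neighbourhood in $W_{G}$ is a minimal vertex cover. Your version merely spells out the vertex-cover verification that the paper leaves as ``clear from the construction.''
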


\begin{proof}
Let $A$ be a maximal stable set of $G$. Then the set of whiskers $M=\{\{x_{i},y_{i}\}\mid x_{i}\in A\}$ forms an induced matching in $W_{G}$. Therefore we have $\mathrm{im}(W_{G})\geq \vert A\vert$. Now $A$ is a stable set in $W_{G}$ too and it is clear from the construction of $W_{G}$ that $\mathcal{N}_{W_{G}}(A)$ is a vertex cover of $W_{G}$. Thus applying Lemma \ref{v2.2} and Theorem \ref{v2.3}, we get $\mathrm{v}(I(W_{G}))\leq \vert A\vert\leq \mathrm{im}(W_{G})$.
\end{proof}

\noindent Theorem \ref{v4.10} also follows from (\cite{grv}, Theorem 2 and Lemma 1).

\begin{definition}[\cite{hkm}]{\rm
Let $G$ be a simple graph on the vertex set $V(G)=\{x_{1},\ldots, x_{n}\}$, without any 
isolated vertex. For an independent set $S\subset V(G)$, the $S$-suspension of $G$, 
denoted by $G^{S}$, is the graph given by 
\begin{enumerate}
\item[$\bullet$] $V(G^{S})= V(G)\cup \{x_{n+1}\}$, where $x_{n+1}$ is a new vertex;
\item[$\bullet$] $E(G^{S})=E(G)\cup \{\{x_{i},x_{n+1}\}\mid x_{i}\not\in S\}$.
\end{enumerate}
}
\end{definition}

\begin{proposition}\label{v4.14}
Let $G$ be a simple graph and $G^{S}$ be a $S$-suspension of $G$ with respect to an independent set $S\subset V(G)$. Then $\mathrm{v}(I(G^{S}))=1$.
\end{proposition}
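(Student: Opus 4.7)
The plan is to show both inequalities $\mathrm{v}(I(G^{S})) \geq 1$ and $\mathrm{v}(I(G^{S})) \leq 1$ directly, using the combinatorial characterization in Lemma \ref{v2.2} and Theorem \ref{v2.3}. The new vertex $x_{n+1}$ is the witness: I would use the singleton stable set $A = \{x_{n+1}\}$.

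For the upper bound, first I would verify that $A = \{x_{n+1}\}$ is a stable set of $G^{S}$, which is automatic since $x_{n+1}$ is an isolated vertex of $\{x_{n+1}\}$ viewed as a subset. Next, I would compute $\mathcal{N}_{G^{S}}(\{x_{n+1}\}) = V(G)\setminus S$ directly from the definition of the $S$-suspension. The crucial step is to check that $V(G)\setminus S$ is a vertex cover of $G^{S}$. This splits into two cases: an edge of the form $\{x_{i}, x_{n+1}\}$ has $x_{i} \notin S$ by construction, so $x_{i} \in V(G)\setminus S$; an edge $\{x_{i}, x_{j}\} \in E(G)$ cannot be entirely contained in $S$ since $S$ is independent, so at least one endpoint lies in $V(G)\setminus S$. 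By Lemma \ref{v2.2}(b), $\mathcal{N}_{G^{S}}(\{x_{n+1}\})$ is then a minimal vertex cover, i.e., $\{x_{n+1}\} \in \mathcal{A}_{G^{S}}$, and by Lemma \ref{v2.2}(a),
\[
(I(G^{S}) : x_{n+1}) = \big\langle V(G)\setminus S \big\rangle \in \mathrm{Ass}(I(G^{S})).
\]
Theorem \ref{v2.3} then gives $\mathrm{v}(I(G^{S})) \leq |A| = 1$.

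For the lower bound, $\mathrm{v}(I(G^{S})) = 0$ would force the existence of a constant $f \in R_{0} = K^{\ast}$ with $(I(G^{S}) : f) = I(G^{S})$ being prime, which would require $I(G^{S})$ to be generated by variables. Since $G$ has no isolated vertex, $G$ has at least one edge, so $I(G^{S})$ has a quadratic squarefree generator and cannot be prime. Hence $\mathrm{v}(I(G^{S})) \geq 1$, and combined with the upper bound we conclude $\mathrm{v}(I(G^{S})) = 1$.

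There is no real obstacle here; the only subtlety is to recognize that the vertex cover property of $V(G)\setminus S$ uses exactly the hypothesis that $S$ is independent, and that the non-primeness of $I(G^{S})$ (which rules out $\mathrm{v} = 0$) follows from the assumption that $G$ has no isolated vertex.
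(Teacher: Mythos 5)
Your proof is correct and follows essentially the same route as the paper: both use the singleton stable set $A=\{x_{n+1}\}$, identify $\mathcal{N}_{G^{S}}(A)=V(G)\setminus S$ as a (minimal) vertex cover via the independence of $S$, and conclude by Lemma \ref{v2.2} and Theorem \ref{v2.3}. The only difference is cosmetic: you verify the vertex-cover property by a direct case check and make the exclusion of $\mathrm{v}=0$ explicit, whereas the paper observes that $S\cup\{x_{n+1}\}$ is independent and leaves non-primeness of $I(G^{S})$ implicit.
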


\begin{proof}
Take $A=\{x_{n+1}\}$. Then we have
$$\mathcal{N}_{G^{S}}(A)= V(G)\setminus S=V(G^{S})\setminus (S\cup\{x_{n+1}\}).$$
By construction of $G^{S}$, $S\cup\{x_{n+1}\}$ is an independent set of $G^{S}$ and hence $\mathcal{N}_{G^{S}}(A)$ is a vertex cover of $G^{S}$. Therefore, from Lemma \ref{v2.2} it follows that 
$$ (I(G^{S}):X_{A})=\big<\mathcal{N}_{G^{S}}(A)\big>.$$
Thus by Theorem \ref{v2.3}, we have $\mathrm{v}(I(G^{S}))= \vert A\vert=1$.
\end{proof}

\begin{corollary}\label{4.15}
Let $n$ be any positive integer. Then we have a graph $G$ such that $\mathrm{reg}(R/I(G))-\mathrm{v}(I(G))=n$, i.e, for a simple connected graph $G$, $\mathrm{reg}(R/I(G))$ can be arbitrarily larger than $\mathrm{v}(I(G))$.
\end{corollary}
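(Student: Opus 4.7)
The plan is to exhibit, for each positive integer $n$, a specific connected graph $G_n$ whose $\mathrm{v}$-number is $1$ while whose regularity is $n+1$. Proposition \ref{v4.14} already hands us a rich class of graphs with $\mathrm{v}$-number equal to $1$, namely the $S$-suspensions $G^{S}$ for any independent set $S$. So the entire problem reduces to producing an $S$-suspension whose regularity can be made as large as we please.

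The construction I have in mind is the following. Given $n\geq 1$, let $H$ be the disjoint union of $n+1$ edges, with vertex set $\{x_{1},y_{1},\ldots,x_{n+1},y_{n+1}\}$ and edges $\{x_{i},y_{i}\}$. Take $S=\{x_{1},\ldots,x_{n+1}\}$, which is plainly independent in $H$. Form the $S$-suspension $G_{n}:=H^{S}$. By the definition of suspension, $G_{n}$ adds a new vertex $x_{n+2}$ and the edges $\{x_{n+2},y_{i}\}$ for $i=1,\ldots,n+1$. The resulting graph has $2n+3$ vertices and $2n+2$ edges, is connected (any $x_{i}$ reaches $x_{n+2}$ through $y_{i}$), and contains no cycle, so $G_{n}$ is a tree (a spider with center $x_{n+2}$ and $n+1$ legs of length two).

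The regularity is then pinned down by the induced matching number. On one hand, the $n+1$ pendant edges $\{x_{i},y_{i}\}$ form an induced matching in $G_{n}$, because distinct $x_{i},x_{j}$ are non-adjacent, distinct $y_{i},y_{j}$ are non-adjacent (they share only the common neighbour $x_{n+2}$, not an edge), and $x_{i}\not\sim y_{j}$ for $i\neq j$; hence $\mathrm{im}(G_{n})\geq n+1$. On the other hand, any induced matching can pick at most one edge from each length-two leg, so $\mathrm{im}(G_{n})\leq n+1$. Since $G_{n}$ is a tree, hence chordal, Theorem \ref{v4.8} (or equivalently Zheng's equality for forests) gives $\mathrm{reg}(R/I(G_{n}))=\mathrm{im}(G_{n})=n+1$. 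Combining this with $\mathrm{v}(I(G_{n}))=1$ from Proposition \ref{v4.14} yields
\[
\mathrm{reg}(R/I(G_{n}))-\mathrm{v}(I(G_{n}))=(n+1)-1=n,
\]
proving the corollary.

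There is essentially no hard step here; the only thing that needs a second look is the verification that the pendant edges really do form an induced matching (so that the induced matching number is $n+1$ rather than smaller), which boils down to checking that no two $y_{i},y_{j}$ are joined by an edge of $G_{n}$. That is immediate from the suspension construction, since the only new edges are the whisker-type edges from $x_{n+2}$. Thus the main work is just the careful bookkeeping of the suspension and the appeal to the chordal/tree regularity equality.
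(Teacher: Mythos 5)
Your proof is correct, and it rests on the same pivot as the paper's: Proposition \ref{v4.14} guarantees that any $S$-suspension has $\mathrm{v}$-number $1$, so everything reduces to making the regularity of a suspension large. Where you diverge is in how the regularity is controlled. The paper starts from an arbitrary \emph{connected} graph $H$ with $\mathrm{reg}(R'/I(H))=n+1$ and invokes the external result (\cite{hkm}, Lemma 1.5) that passing to the $S$-suspension does not change the regularity. You instead build a completely explicit suspension --- the spider obtained by suspending $n+1$ disjoint edges over the set $S$ of pendant endpoints --- and compute its regularity internally: the graph is a tree, hence chordal, so $\mathrm{reg}(R/I(G_n))=\mathrm{im}(G_n)$ by (\cite{havan}, Corollary 6.9) as quoted in Theorem \ref{v4.8}, and the induced matching number is pinned at $n+1$ by the leg-counting argument. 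Your route buys self-containedness (no appeal to \cite{hkm}, Lemma 1.5) and a concrete family of witnesses; the paper's route buys flexibility, since it shows that \emph{every} graph of prescribed regularity yields such an example after one suspension. All the verifications you flag --- that $S$ is independent in the base graph, that the base graph has no isolated vertices so the suspension is defined, that the pendant edges form an induced matching, and that $G_n$ is connected --- check out.
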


\begin{proof}
We can choose a connected graph $H$ with $\mathrm{reg}(R^{\prime}/I(H))=n+1$, where $R^{\prime}=K[V(H)]$. Consider the graph $G=H^{S}$, where $S$ is a stable set of $H$. Then Proposition \ref{v4.14} gives $\mathrm{v}(I(G))=1$ and by (\cite{hkm}, Lemma 1.5), we have $\mathrm{reg}(R/I(G))=\mathrm{reg}(R^{\prime}/I(H))=n+1$, where $R=R^{\prime}[x_{n+1}]$. Therefore $\mathrm{reg}(R/I(G))-\mathrm{v}(I(G))=n$.
\end{proof}

\begin{theorem}[Terai, \cite{terai}]\label{v4.16}
Let $I$ be a square-free monomial ideal in a polynomial ring $R$. Then 
$ \mathrm{reg}(I)=\mathrm{reg}(R/I)+1=\mathrm{pd}(R/I^{\vee})$.
\end{theorem}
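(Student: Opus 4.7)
The plan is to attack the two asserted equalities separately, since the second equality is the substantive content of Terai's theorem while the first is purely formal.

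For the equality $\mathrm{reg}(I)=\mathrm{reg}(R/I)+1$, I would not use the square-free hypothesis at all. If $\mathbf{F}_\bullet\to R/I\to 0$ is a minimal graded free resolution with $F_0=R$ and $F_i=\bigoplus_j R(-j)^{\beta_{i,j}(R/I)}$ for $i\geq 1$, then truncating off $F_0$ gives a minimal graded free resolution of $I$ with $\beta_{i,j}(I)=\beta_{i+1,j}(R/I)$ for $i\geq 0$. Consequently
$$\mathrm{reg}(I)=\max\{j-i\mid \beta_{i,j}(I)\neq 0\}=\max\{j-(i+1)+1\mid \beta_{i+1,j}(R/I)\neq 0\}=\mathrm{reg}(R/I)+1.$$

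The substantive part is $\mathrm{reg}(I)=\mathrm{pd}(R/I^{\vee})$ for the square-free monomial ideal $I=I_{\Delta}$, where $\Delta$ is the simplicial complex on $V=\{x_1,\ldots,x_n\}$ with $I_{\Delta}=I$ and $\Delta^{\vee}$ is its Alexander dual, so that $I_{\Delta^{\vee}}=I^{\vee}$. The natural tool is Hochster's formula for the $\mathbb{Z}^n$-graded Betti numbers,
$$\beta_{i,\sigma}(R/I_{\Delta})=\dim_K \tilde{H}_{|\sigma|-i-1}(\Delta|_{\sigma};K)\qquad(\sigma\subseteq V),$$
where $\Delta|_{\sigma}$ is the induced subcomplex on $\sigma$. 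Since only square-free multidegrees contribute, the standard $\mathbb{Z}$-graded Betti number $\beta_{i,j}(R/I_{\Delta})$ is obtained by summing over $\sigma$ with $|\sigma|=j$, and
$$\mathrm{reg}(I_{\Delta})=\mathrm{reg}(R/I_{\Delta})+1=\max\bigl\{|\sigma|-i\;\big|\; \tilde{H}_{|\sigma|-i-1}(\Delta|_{\sigma};K)\neq 0\bigr\}+1.$$

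The bridge from $\Delta$ to $\Delta^{\vee}$ is combinatorial Alexander duality: for every $\sigma\subseteq V$ and every $k$ there is a natural isomorphism
$$\tilde{H}_{k-1}(\Delta|_{\sigma};K)\cong \tilde{H}^{\,|\sigma|-k-2}\bigl(\mathrm{lk}_{\Delta^{\vee}}(V\setminus \sigma);K\bigr).$$
Substituting this into Hochster's formula for $R/I_{\Delta^{\vee}}$ (which reads reduced (co)homology of links in $\Delta^{\vee}$), one sees that every pair $(i,\sigma)$ producing a nonzero Betti summand of $R/I_{\Delta}$ corresponds bijectively, via $i'=|\sigma|-i$ and a suitable $\sigma'$, to a pair producing a nonzero Betti summand of $R/I_{\Delta^{\vee}}$. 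Taking maxima, the quantity $\max(|\sigma|-i)$ computing $\mathrm{reg}(I_{\Delta})$ on the one side becomes $\max i'$ computing $\mathrm{pd}(R/I_{\Delta^{\vee}})$ on the other side.

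The main obstacle is bookkeeping across three simultaneous index shifts: the passage $\beta_{i,j}(I)=\beta_{i+1,j}(R/I)$; Hochster's conversion that writes $\beta_{i,\sigma}$ as a reduced homology in dimension $|\sigma|-i-1$; and the Alexander duality shift of dimensions by $|\sigma|-k-2$ together with the swap of induced subcomplexes on $\sigma$ in $\Delta$ with links of $V\setminus\sigma$ in $\Delta^{\vee}$. Once these three identities are aligned, the bijection between the $(i,\sigma)$ pairs on each side is explicit and the equality $\mathrm{reg}(I_{\Delta})=\mathrm{pd}(R/I_{\Delta^{\vee}})$ drops out with no further work.
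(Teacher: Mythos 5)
This statement is quoted in the paper as Terai's theorem from \cite{terai}; the paper supplies no proof, so your proposal can only be measured against the standard argument. Your first equality, $\mathrm{reg}(I)=\mathrm{reg}(R/I)+1$ via the shift $\beta_{i,j}(I)=\beta_{i+1,j}(R/I)$, is correct (with the routine remark that the $i=0$ column of $R/I$ contributes only $j-i=0$ and so never dominates for a proper nonzero ideal). Your two main ingredients for the second equality are also correctly stated: Hochster's formula $\beta_{i,\sigma}(R/I_{\Delta})=\dim_K\tilde{H}_{|\sigma|-i-1}(\Delta|_{\sigma};K)$, and the combinatorial Alexander duality $\tilde{H}_{k-1}(\Delta|_{\sigma};K)\cong\tilde{H}^{|\sigma|-k-2}(\mathrm{lk}_{\Delta^{\vee}}(V\setminus\sigma);K)$, which rests on the identity $(\Delta|_{\sigma})^{\vee}=\mathrm{lk}_{\Delta^{\vee}}(V\setminus\sigma)$ (dual taken inside $\sigma$).

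The gap is in the last step, where you assert an ``explicit bijection'' between nonzero Betti positions of $R/I_{\Delta}$ and of $R/I_{\Delta^{\vee}}$ from which the equality ``drops out with no further work.'' After applying your duality isomorphism, the condition $\beta_{i,\sigma}(R/I_{\Delta})\neq 0$ becomes $\tilde{H}^{i-2}(\mathrm{lk}_{\Delta^{\vee}}(V\setminus\sigma);K)\neq 0$, i.e.\ nonvanishing cohomology of a \emph{link} in $\Delta^{\vee}$; but Hochster's formula computes $\beta_{i',\sigma'}(R/I_{\Delta^{\vee}})$ from \emph{induced subcomplexes} $\Delta^{\vee}|_{\sigma'}$, and a link $\mathrm{lk}_{\Delta^{\vee}}(\tau)$ is not in general an induced subcomplex. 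So the two families of homology groups are indexed by genuinely different data, there is no entry-by-entry correspondence of Betti tables (already the numbers of minimal generators of $I_{\Delta}$ and $I_{\Delta^{\vee}}$ differ), and only the \emph{maxima} $\max\{d:\tilde{H}_{d}(\Gamma|_{\sigma})\neq 0\}$ and $\max\{d:\tilde{H}_{d}(\mathrm{lk}_{\Gamma}(\tau))\neq 0\}$ coincide --- which is precisely the content still needing proof. The standard way to close this (Terai's own route) is to compute $\mathrm{reg}(R/I_{\Delta})$ not from Betti numbers but from local cohomology, using the second Hochster formula $\dim_K H^{i}_{\mathfrak{m}}(R/I_{\Delta})_{-\mathbf{a}}=\dim_K\tilde{H}_{i-|\mathrm{supp}(\mathbf{a})|-1}(\mathrm{lk}_{\Delta}(\mathrm{supp}(\mathbf{a}));K)$, so that both sides of $\mathrm{reg}(I_{\Delta})=\mathrm{pd}(R/I_{\Delta^{\vee}})$ are expressed in terms of links of the \emph{same} complex; alternatively one must prove the combinatorial lemma equating the two maxima separately. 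As written, your argument stops one nontrivial step short.
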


\begin{definition}\label{v4.17}{\rm
Let $I=I(\mathcal{C})$ be an edge ideal of a clutter $\mathcal{C}$. The \textit{Alexander dual ideal} 
of $I$, denoted by $I^{\vee}$, is the ideal defined by
$$I^{\vee}=\big< \{X_{C}\mid C\,\, \text{is\,\, a\,\, minimal\,\, vertex\,\, cover\,\, of}\,\,\mathcal{C}\}\big>.$$
}
\end{definition}

\noindent From (\cite{v}, Lemma 3.16), we have $\mathrm{v}(I(\mathcal{C})^{\vee})\geq \alpha_{0}(\mathcal{C})-1$.

\begin{proposition}\label{v4.18}
Let $\mathcal{C}$ be a clutter that can not be written as union of two disjoint clutters. If the answer to Question \ref{v5.2} (see Section 5) is true and $\mathrm{v}(I^{\vee})\geq \alpha_{0}(\mathcal{C})+1$, then $I=I(\mathcal{C})$ is not Cohen-Macaulay.
\end{proposition}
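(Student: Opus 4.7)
The natural strategy is a proof by contradiction: assume $R/I$ is Cohen-Macaulay and show that this forces $\mathrm{v}(I^{\vee}) \leq \alpha_{0}(\mathcal{C})$, contradicting the hypothesis $\mathrm{v}(I^{\vee}) \geq \alpha_{0}(\mathcal{C})+1$. The two main ingredients are the Auslander--Buchsbaum formula (translating the Cohen-Macaulay hypothesis into a value of $\mathrm{pd}(R/I)$) and Terai's theorem (Theorem \ref{v4.16}), which converts this projective dimension into a regularity of the Alexander dual.

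First, since $R/I$ is Cohen-Macaulay, one has $\mathrm{depth}(R/I) = \dim(R/I) = n - \mathrm{ht}(I)$, and combined with Auslander--Buchsbaum this yields
$$\mathrm{pd}(R/I) \;=\; \mathrm{ht}(I) \;=\; \alpha_{0}(\mathcal{C}).$$
Next, Theorem \ref{v4.16} applied to the square-free monomial ideal $I$ gives
$$\mathrm{reg}(I^{\vee}) \;=\; \mathrm{reg}(R/I^{\vee}) + 1 \;=\; \mathrm{pd}(R/I) \;=\; \alpha_{0}(\mathcal{C}),$$
so $\mathrm{reg}(R/I^{\vee}) = \alpha_{0}(\mathcal{C}) - 1$.

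The final step is to feed $I^{\vee}$ into Question \ref{v5.2}. Write $I^{\vee} = I(\mathcal{C}^{\vee})$, where $\mathcal{C}^{\vee}$ is the clutter whose edges are the minimal vertex covers of $\mathcal{C}$. Using $\mathcal{C}^{\vee\vee} = \mathcal{C}$, one argues that the hypothesis ``$\mathcal{C}$ is not a disjoint union of two clutters'' transfers to $\mathcal{C}^{\vee}$, so that the assumed affirmative answer to Question \ref{v5.2} applies and delivers
$$\mathrm{v}(I^{\vee}) \;\leq\; \mathrm{reg}(R/I^{\vee}) + 1 \;=\; \alpha_{0}(\mathcal{C}).$$
Combined with the standing hypothesis $\mathrm{v}(I^{\vee}) \geq \alpha_{0}(\mathcal{C}) + 1$, this is a contradiction, so $R/I$ cannot be Cohen-Macaulay.

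The hard part is this last step: verifying that Question \ref{v5.2} is applicable to $I^{\vee}$. The passage from $\mathcal{C}$ to $\mathcal{C}^{\vee}$ is delicate, since the Alexander dual of a non-decomposable clutter need not itself be non-decomposable at the level of edges (for instance, a single edge $\{x_{1},x_{2}\}$ dualizes to the pair of singletons $\{x_{1}\},\{x_{2}\}$). If a disjoint decomposition $\mathcal{C}^{\vee} = \mathcal{D}_{1} \sqcup \mathcal{D}_{2}$ does occur, one should fall back on the additivity of the v-number (Proposition \ref{v3.5}) applied to $I^{\vee} = I(\mathcal{D}_{1})R + I(\mathcal{D}_{2})R$, together with the corresponding additivity of regularity for ideals in disjoint variables, and then apply Question \ref{v5.2} to each component $\mathcal{D}_{i}$; the content of the ``not a disjoint union'' assumption on $\mathcal{C}$ is precisely what is needed to control this reduction and preserve the inequality $\mathrm{v}(I^{\vee}) \leq \mathrm{reg}(R/I^{\vee}) + 1$.
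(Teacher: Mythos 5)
Your proof is correct and follows essentially the same route as the paper: Auslander--Buchsbaum plus Terai's theorem (Theorem \ref{v4.16}) to convert $\mathrm{depth}(R/I)$ into $n-1-\mathrm{reg}(R/I^{\vee})$, and then the assumed affirmative answer to Question \ref{v5.2}, applied to $I^{\vee}$, together with $\mathrm{v}(I^{\vee})\geq \alpha_{0}(\mathcal{C})+1$. The only cosmetic difference is that the paper runs the computation directly, concluding $\mathrm{depth}(R/I)\leq \mathrm{dim}(R/I)-1$, whereas you argue by contradiction from the Cohen--Macaulay assumption; these are the same argument.

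One remark on the point you flag as ``the hard part.'' You are right that applying Question \ref{v5.2} to $I^{\vee}=I(\mathcal{C}^{\vee})$ requires $\mathcal{C}^{\vee}$, not $\mathcal{C}$, to be non-decomposable, and right that non-decomposability does not pass to the Alexander dual in general (your single-edge example is a valid witness). The paper's proof silently applies the question to $I^{\vee}$ without addressing this, so you have identified a real soft spot rather than created one. However, your proposed fallback does not close it: if $\mathcal{C}^{\vee}=\mathcal{D}_{1}\sqcup\cdots\sqcup\mathcal{D}_{k}$ with $k\geq 2$, then additivity of the $\mathrm{v}$-number (Proposition \ref{v3.5}) and of regularity gives only
$$\mathrm{v}(I^{\vee})=\sum_{i}\mathrm{v}(I(\mathcal{D}_{i}))\leq \sum_{i}\bigl(\mathrm{reg}(R_{i}/I(\mathcal{D}_{i}))+1\bigr)=\mathrm{reg}(R/I^{\vee})+k,$$
which is strictly weaker than the bound $\mathrm{reg}(R/I^{\vee})+1$ the argument needs (indeed, Example \ref{v5.1} shows the $+1$ bound genuinely fails for decomposable clutters). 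So the final sentence of your plan asserts, rather than proves, that the non-decomposability of $\mathcal{C}$ controls this reduction. The honest reading of the proposition is that one should either assume the answer to Question \ref{v5.2} for the clutter $\mathcal{C}^{\vee}$ of minimal vertex covers, or verify separately that $\mathcal{C}^{\vee}$ satisfies the question's hypothesis in the situation at hand.
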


\begin{proof}
By the Auslander-Buchsbaum formula (\cite{peeva}, Formula 15.3), we have 
$$\mathrm{depth}(R/I)+\mathrm{pd}(R/I)=n.$$
By the given condition, Question \ref{v5.2} implies $\mathrm{v}(I^{\vee})\leq \mathrm{reg}(R/I)+1$. Therefore, using Theorem \ref{v4.16} we get 
\begin{align*}
\mathrm{depth}(R/I)&=  n-1-\mathrm{reg}(R/I^{\vee})\\
& \leq  n-\mathrm{v}(I^{\vee})\\
& \leq n-1-\alpha_{0}(\mathcal{C})=\mathrm{dim}(R/I)-1.
\end{align*}
Hence $I$ is not Cohen-Macaulay as $\mathrm{depth}(R/I)<\mathrm{dim}(R/I)$.
\end{proof}

For a large class of square-free monomial ideals $I$, we have $\mathrm{v}(I)\leq \mathrm{reg}(R/I)$. 
The following Corollary gives a sufficient condition for the non Cohen-Macaulayness of 
$I = I(\mathcal{C})$.

\begin{corollary}
If $\alpha_{0}(\mathcal{C})\leq \mathrm{v}(I^{\vee})\leq \mathrm{reg}(R/I^{\vee})$, then $I=I(\mathcal{C})$ is not Cohen-Macaulay.
\end{corollary}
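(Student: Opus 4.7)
The plan is to mirror the chain of inequalities used in Proposition \ref{v4.18}, but now bypass any appeal to Question \ref{v5.2} entirely, since the stronger bound $\mathrm{v}(I^{\vee})\leq \mathrm{reg}(R/I^{\vee})$ is assumed outright. The argument is essentially a two-line substitution: combine the Auslander--Buchsbaum formula with Terai's duality, then apply the two hypothesized inequalities in sequence.

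First, I would combine $\mathrm{depth}(R/I)+\mathrm{pd}(R/I)=n$ (Auslander--Buchsbaum) with Terai's Theorem \ref{v4.16} applied to $I^{\vee}$, which gives $\mathrm{pd}(R/I)=\mathrm{reg}(I^{\vee})=\mathrm{reg}(R/I^{\vee})+1$. Combining these yields
\[
\mathrm{depth}(R/I)=n-1-\mathrm{reg}(R/I^{\vee}).
\]
Next, plug in the first hypothesis $\mathrm{v}(I^{\vee})\leq \mathrm{reg}(R/I^{\vee})$ to obtain $\mathrm{depth}(R/I)\leq n-1-\mathrm{v}(I^{\vee})$, and then the second hypothesis $\alpha_{0}(\mathcal{C})\leq \mathrm{v}(I^{\vee})$ to reach
\[
\mathrm{depth}(R/I)\leq n-1-\alpha_{0}(\mathcal{C}).
\]
Recalling from Section 2 that $\alpha_{0}(\mathcal{C})+\beta_{0}(\mathcal{C})=n$ and $\mathrm{dim}(R/I)=\beta_{0}(\mathcal{C})$, the right side is $\mathrm{dim}(R/I)-1$, so $\mathrm{depth}(R/I)<\mathrm{dim}(R/I)$, forcing $R/I$ to fail Cohen--Macaulayness.

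There is essentially no obstacle: the proof is a direct rearrangement of the same inequality chain as in Proposition \ref{v4.18}, and the only conceptual point is recognizing that the hypothesis $\mathrm{v}(I^{\vee})\leq \mathrm{reg}(R/I^{\vee})$ is exactly what one needs to avoid invoking Question \ref{v5.2}. Note also that no hypothesis on the indecomposability of $\mathcal{C}$ is needed here, unlike in Proposition \ref{v4.18}, because the desired bound on $\mathrm{v}(I^{\vee})$ is assumed rather than derived from the conjectural question.
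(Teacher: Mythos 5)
Your proof is correct and is exactly what the paper intends: the paper's proof of this corollary simply says it follows from the proof of Proposition \ref{v4.18}, and your inequality chain $\mathrm{depth}(R/I)=n-1-\mathrm{reg}(R/I^{\vee})\leq n-1-\mathrm{v}(I^{\vee})\leq n-1-\alpha_{0}(\mathcal{C})=\mathrm{dim}(R/I)-1$ is precisely that argument with the conjectural bound of Question \ref{v5.2} replaced by the assumed inequality $\mathrm{v}(I^{\vee})\leq \mathrm{reg}(R/I^{\vee})$. Your observations that the indecomposability hypothesis becomes unnecessary and that the weaker lower bound $\alpha_{0}(\mathcal{C})\leq \mathrm{v}(I^{\vee})$ now suffices are both accurate.
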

\begin{proof}
Follows directly from the proof of Proposition \ref{v4.18}.
\end{proof}
\medskip

\section{Some Open Problems on v-Number}
Jaramillo and Villarreal disproved (\cite{npv}, Conjecture 4.2) by giving an example (see \cite{v}, 
Example 5.4) of a graph $G$ for which $\mathrm{v}(I(G))> \mathrm{reg}(R/I(G))$. They also proposed 
an open problem, whether $\mathrm{v}(I)\leq \mathrm{reg}(R/I)+1$ for any square-free monomial ideal 
$I$. The answer is no and we give the following example in support:
\medskip

\begin{example}\label{v5.1}{\rm
Take $H=G_{1}\sqcup G_{2}$ with $G_{1}\simeq G_{2}\simeq G$, where $G$ is the graph in (\cite{v}, Example 5.4). It was given in (\cite{v}, Example 5.4) that $\mathrm{v}(I(G))=3$ and $\mathrm{reg}(R^{\prime}/I(G))=2$, where $R^{\prime}=\mathbb{Q}[V(G)]$. Then by Proposition \ref{v3.5} and by (\cite{w}, Lemma 7)  we have $\mathrm{v}(I(H))= 6$ and $\mathrm{reg}(R/I(H))=4$, where $R=\mathbb{Q}[V(H)]$. Hence $\mathrm{v}(I(H))> \mathrm{reg}(R/I(H))+1$.
}
\end{example}
\medskip

In our example the graph $H$ is not connected. So we can modify the open problem by putting the 
condition of connectedness:

\begin{question}\label{v5.2}
Let $\mathcal{C}$ be a clutter which can not be written as an union of two disjoint clutter. Then is it true that 
$$\mathrm{v}(I(\mathcal{C}))\leq \mathrm{reg}(R/I(\mathcal{C}))+1 ?$$ 
This question for graphs would be the following:

\noindent Let $G$ be a simple connected graph. Is it true that $$\mathrm{v}(I(G))\leq \mathrm{reg}(R/I(G))+1 ?$$
\end{question}

For a simple graph $G$, we have from (\cite{katz}, Lemma 2.2; \cite{hahui}, Theorem 4.1) that 
$\mathrm{im}(G)\leq \mathrm{reg}(R/I(G))$. So, we want to find a relation between 
$\mathrm{v}(I(G))$ and $\mathrm{im}(G)$ for connected graphs $G$, which might help 
is find an answer to the Question \ref{v5.2} for connected graphs. In many cases, 
we have $\mathrm{v}(I(G))\leq \mathrm{im}(G)$, for example, if $G$ is a bipartite graph (see Theorem \ref{imbvr} or if $G$ is a $(C_{4},C_{5})$-free vertex decomposable graph (see Theorem \ref{v4.9}) or $G$ is a whisker graph (see Theorem \ref{v4.10}). Let us consider the following example:
\medskip

\begin{center}
\begin{tikzpicture}
  [scale=.4,auto=left,every node/.style={circle,scale=0.5}]
 
  \node[draw,fill=blue!20] (n1) at (0,0)  {$1$};
  \node[draw,fill=blue!20] (n2) at (6,0)  {$2$};
  \node[draw,fill=blue!20] (n3) at (0,4) {$3$};
   \node[draw,fill=blue!20] (n4) at (6,4) {$4$};
   \node[draw,fill=blue!20] (n5) at (3,7) {$5$};
 
\node[scale=2] (n6) at (3,-2){$G$};

\node[draw,fill=blue!20] (m1) at (12,0)  {$1$};
  \node[draw,fill=blue!20] (m2) at (18,0)  {$2$};
  \node[draw,fill=blue!20] (m3) at (12,4) {$3$};
   \node[draw,fill=blue!20] (m4) at (18,4) {$4$};
   \node[draw,fill=blue!20] (m5) at (15,7) {$5$};
 
 \node[scale=2] (n6) at (15,-2){$H$};
 
  \foreach \from/\to in {n1/n2,n1/n3,n2/n4,n3/n5, n4/n5, m1/m3,m2/m4,m3/m5, m4/m5}
    \draw[] (\from) -- (\to);
   
\end{tikzpicture}
\end{center} 
\medskip
We have $\mathrm{v}(I(G))=2$, $\mathrm{im}(G)=1$ and $\mathrm{v}(I(H))=1$, $\mathrm{im}(H)=2$. 
In view of this, we can ask the followign question, which can answer Question \ref{v5.2} for 
edge ideals of graphs.
 
 \begin{question}\label{v5.3}
 For a connected graph $G$, is it true that
 $$\mathrm{v}(I(G))\leq \mathrm{im}(G)+1?$$
 \end{question}

Moreover, we can generalize Question \ref{v5.3} for any edge ideal of a clutter $\mathcal{C}$, which can not be written as an union of two disjoint clutters (see Question \ref{v5.4}).
\medskip

Let $\mathcal{C}$ be a clutter. A set $M\subset E(\mathcal{C})$ is called a \textit{matching} in 
$\mathcal{C}$ if the edges in $M$ are pairwise disjoint. The matching $M$ is called an 
\textit{induced matching} in $\mathcal{C}$ if the induced subclutter on the vertex set $(\bigcup_{e\in M}e)$ contains only $M$ as the edge set. The maximum size of an induced matching in $\mathcal{C}$ is known as the \textit{induced matching number} of $\mathcal{C}$, denoted by $\mathrm{im}(\mathcal{C})$.
\medskip

Let $\mathcal{C}$ be a clutter and let $\{e_{1},\ldots,e_{k}\}$ form an induced matching in $\mathcal{C}$. Then (\cite{mvil}, Corollary 3.9; \cite{hahui}, Theorem 4.2) gives
$$\sum_{i=1}^{k}(\vert e_{i}\vert-1)\leq \mathrm{reg}(R/I(\mathcal{C})).$$

\begin{question}\label{v5.4}
Let $\mathcal{C}$ be a clutter which can not be written as an union of two disjoint clutters. Does there exist an induced matching $\{ e_{1},\ldots,e_{k}\}$ of $\mathcal{C}$ such that
$$ \mathrm{v}(I(\mathcal{C}))\leq \sum_{i=1}^{k}(\vert e_{i}\vert -1)+1?$$
\end{question}

An answer to Question \ref{v5.4}, together with (\cite{mvil}, Corollary 3.9; \cite{hahui}, Theorem 4.2) 
can give an answer to Question \ref{v5.2}.
\medskip

The next problem is about our interest to know the relation between $\mathrm{depth}(R/I)$ and $\mathrm{v}(I)$ for any square-free monomial ideal. If $R/I$ is Cohen-Macaulay, then by Theorem \ref{v2.3}, $\mathrm{v}(I)\leq \mathrm{depth}(R/I)$.
\medskip

\begin{question}
For a square-free monomial ideal $I$, does $\mathrm{v}(I)\leq \mathrm{depth}(R/I)$ hold? Also can we say that 
$$\mathrm{v}(I)\geq \mathrm{dim}(R/I)-\mathrm{depth}(R/I)?$$
\end{question} 
\medskip

If we can relate $\mathrm{v}(I(G))$ with respect to some invariants of $L^{2}(G)$, then it would be easy to answer Question \ref{v5.3} because $\mathrm{im}(G)=\beta_{0}(L^{2}(G))$.
 
\begin{question}
Find $\mathrm{v}(I(G))$ in terms of some invariants of $L^{2}(G)$, where $G$ is a connected graph.
\end{question}

\section*{acknowledgements}
The authors would like to record their sincere 
gratitude to Prof R.H. Villarreal for his 
valuable comments. The computer algebra software 
Macaulay2 \cite{mac2} has been used extensively for 
carrying out computations.

\section*{Data availability statement}
There is no data associated with this work. 

\bibliographystyle{amsalpha}

\begin{thebibliography}{A}

\bibitem{am}  Atiyah, M. F., Macdonald, I. G., \emph{Introduction to commutative algebra}, Addison-Wesley, Reading, Mass (1969).

\bibitem{bc}  Biyiko$\mathrm{\breve{g}}$lu, T., Civan, Y., \emph{Vertex-decomposable graphs, codismantlability, Cohen-Macaulayness, and Castelnuovo-Mumford regularity}, Electron. J. Combin., 21: no. 1, Paper 1.1, 17 pp (2014). 

\bibitem{bcreg} Biyiko$\mathrm{\breve{g}}$lu, T., Civan, Y., \emph{Castelnuovo-Mumford regularity of graphs}, Combinatorica 38:  no. 6, 1353--1383 (2018). 

\bibitem{bls} Brandst$\ddot{\mathrm{a}}$dt, A., Le, V. B., Spinrad, J. P., \emph{Graph Classes: A Survey},
Society for Industrial and Applied Mathematics, (1999).

\bibitem{cam} Cameron, K., \emph{Induced matchings}, Discrete Appl. Math., 24: 97--102  (1989).

\bibitem{camimint}  K. Cameron, K.,\emph{Induced matching in intersection graphs}, Discrete Math., 278: 1--9 (2004).

\bibitem{camst} Cameron, K., Sritharan, R., Tang, Y., \emph{Finding a maximum induced matching in weakly
chordal graphs}, Discrete Math. 266:  133--142 (2003).

\bibitem{cstvv} Cooper, S.M., Seceleanu, A., Toh$\check{\text{a}}$neanu, S.O., Vaz Pinto, M., Villarreal, R.H., \emph{Generalized minimum distance functions and algebraic invariants of Geramita ideals}, Adv. Appl. Math., 112 (2020)

\bibitem{dhs} Dao, H., Huneke, C., Schweig, J., \emph{Bounds on the regularity and projective dimension of
ideals associated to graphs}, J. Algebraic Combin., 38: no. 1, 37--55 (2013).

\bibitem{far} Faridi, S., \emph{Monomial ideals via square-free monomial ideals}, in Commutative Algebra, Lecture Notes Pure Applied Mathematics, Vol. 244 (Chapman and Hall CRC, Boca
Raton, FL), 85--114 (2006).

\bibitem{fht} Francisco, C. A., H$\grave{\text{a}}$, H. T., Tuyl, A. V., \emph{Splittings of monomial ideals}, Proc. Amer.
Math. Soc. 137: no. 10, 3271--3282 (2009).

\bibitem{mac2} Grayson, D., Stillman, M., \emph{Macaulay2, a software system for research in algebraic geometry}, available at http://www .math .uiuc .edu /Macaulay2/.

\bibitem{grv} Grisalde, G., Reyes, E. and Villarreal, R.H., \emph{Induced matchings and the v-number of graded ideals}, Mathematics, 9(22), p.2860 (2021).

\bibitem{hahui} H$\grave{\mathrm{a}}$, H. T., \emph{Regularity of squarefree monomial ideals}, Connections between algebra, combinatorics, and geometry, Springer Proc. Math. Stat., 76, Springer, New York,  251--276 (2014).

\bibitem{havan}  H$\grave{\mathrm{a}}$, H. T., Van Tuyl, A., \emph{Monomial ideals, edge ideals of hypergraphs, and their graded Betti numbers}, J. Algebraic Combin., 27, no. 2, 215--245 (2008).

\bibitem{hhmon} Herzog, J.and Hibi, T., \emph{Monomial Ideals}, Graduate Texts in Mathematics 260, Springer (2011).

\bibitem{hkm} Hibi, T., Kanno, H., Matsuda, K., \emph{Induced matching numbers of finite graphs and edge ideals}. J. Algebra, 532: 311--322 (2019). 

\bibitem{v} Jaramillo, D., Villarreal, R. H., \emph{The v-number of edge ideals}, J. Combin. Theory Ser., A 177, 105310, 35 pp (2021).

\bibitem{joos} Joos, F., \emph{Induced matching in graphs of bounded maximum degree}, arXiv: 1406.2440 (2014).

\bibitem{katz} Katzman, M., \emph{Characteristic-independence of Betti numbers of graph ideals}, J. Combin. Theory
Ser., A 113, no. 3, 435--454 (2006).

\bibitem{lz} Liu, J., Zhou, H., \emph{Maximum induced matchings in graphs}, Discrete Math., 170: no. 1-3, 277--281 (1997).

\bibitem{marin} Marinescu-Ghemeci, R., \emph{Maximum induced matchings in grids}, Optimization Theory, Decision
Making and Operations Research Applications. Springer New York, 177--187 (2013).

\bibitem{mpv} Mart$\acute{\text{\i}}$nez-Bernal, J., Pitones, Y., and Villarreal, R. H., \emph{Minimum distance functions of graded ideals and Reed-Muller-type codes}, J. Pure Appl. Algebra 221: 251--275 (2017).

\bibitem{mvil} Morey, S., Villarreal, R. H.,  \emph{Edge ideals: algebraic and combinatorial properties}, Progress in Commutative Algebra, Combinatorics and Homology, Vol. 1, De Gruyter, Berlin, 85--126 (2012).

\bibitem{npv} N$\acute{\mathrm{u}}\tilde{\mathrm{n}}$ez-Betancourt, L., Pitones, Y., Villarreal, R. H., \emph{Footprint and minimum distance functions}, Commun. Korean Math. Soc., 33(1): 85--101 (2018).

\bibitem{peeva} Peeva I., \emph{Graded syzygies, in Algebra and Applications}, Vol. 14 (Springer-Verlag London Ltd., London) (2011).

\bibitem{terai} Terai, N., \emph{Alexander duality theorem and Stanley–Reisner rings},
Surikaisekikenkyusho Kokyuroku, 1078: 174--184 (1999).

\bibitem{vil} Villarreal, R. H., \emph{Monomial Algebras, Monographs and Textbooks in Pure and Applied Mathematics} 238, Marcel Dekker, New York, (2001).

\bibitem{w} Woodroofe, R., \emph{Matchings, coverings, and Castelnuovo-Mumford regularity}, J. Commut. Algebra 6(2): 287--304 (2014).

\bibitem{zito} Zito, M., \emph{Induced matching in regular graphs and trees}. Lecture Notes in Computer Sci. 1665,
Springer, Berlin, (1999).

\end{thebibliography}

\end{document}